\documentclass{article}

\usepackage{microtype}
\usepackage{graphicx}
\usepackage{subcaption}
\usepackage{booktabs} 

\usepackage{hyperref}

\usepackage[accepted]{icml2026}

\usepackage{amsmath}
\usepackage{amssymb}
\usepackage{mathtools}
\usepackage{amsthm}

\usepackage{comment}
\usepackage{siunitx}
\usepackage{relsize}
\usepackage{ifthen}

\usepackage{graphics} 
\usepackage{rotating}
\usepackage{color}
\usepackage{enumerate}
\usepackage[T1]{fontenc}
\usepackage{psfrag}
\usepackage{epsfig} 
\usepackage{booktabs}
\usepackage{graphicx,url}
\usepackage{multirow}
\usepackage{array}
\usepackage{latexsym}
\usepackage{amsfonts}
\usepackage{amsmath}
\usepackage{amssymb}
\usepackage{mathtools}
\usepackage{xstring}
\usepackage{xcolor}
\usepackage{prettyref}
\usepackage{flexisym}
\usepackage{bigdelim}
\usepackage{breqn} 
\usepackage{listings}

\usepackage{enumitem}
\usepackage{xspace}
\usepackage{bm}
\graphicspath{{./figures/}}
\usepackage{tikz}
\usetikzlibrary{matrix,calc}

\usepackage{mdwlist}

\makecompactlist{itemize}{stditemize}

\usepackage[most]{tcolorbox}
\DeclareMathOperator{\tr}{tr}

\newtcolorbox{takeawaybox}[1]{
  colback=gray!10,
  colframe=gray!60,
  arc=2mm,
  boxrule=0.6pt,
  left=6pt,right=6pt,top=6pt,bottom=6pt,
  title=\textbf{#1}
}

\newtcolorbox{casestudy}[1]{
  colback=blue!2,     
  colframe=blue!40,   
  arc=2mm,
  boxrule=0.6pt,
  left=6pt,right=6pt,top=6pt,bottom=6pt,
  title=\textbf{#1}
}

\usepackage{float}

\usepackage{adjustbox}

\newrefformat{prob}{Problem\,\ref{#1}}
\newrefformat{def}{Definition\,\ref{#1}}
\newrefformat{sec}{Section\,\ref{#1}}
\newrefformat{sub}{Section\,\ref{#1}}
\newrefformat{prop}{Proposition\,\ref{#1}}
\newrefformat{app}{Appendix\,\ref{#1}}
\newrefformat{alg}{Algorithm\,\ref{#1}}
\newrefformat{cor}{Corollary\,\ref{#1}}
\newrefformat{thm}{Theorem\,\ref{#1}}
\newrefformat{lem}{Lemma\,\ref{#1}}
\newrefformat{fig}{Fig.\,\ref{#1}}
\newrefformat{tab}{Table\,\ref{#1}}

\newtheorem{theorem}{Theorem}

\newtheorem{corollary}[theorem]{Corollary}

\newtheorem{lemma}[theorem]{Lemma}

\newtheorem{proposition}[theorem]{Proposition}

\newcommand{\bdmath}{\begin{dmath}}
\newcommand{\edmath}{\end{dmath}}
\newcommand{\beq}{\begin{equation}}
\newcommand{\eeq}{\end{equation}}
\newcommand{\bdm}{\begin{displaymath}}
\newcommand{\edm}{\end{displaymath}}
\newcommand{\bea}{\begin{eqnarray}}
\newcommand{\eea}{\end{eqnarray}}
\newcommand{\beal}{\beq \begin{array}{ll}}
\newcommand{\eeal}{\end{array} \eeq}
\newcommand{\beas}{\begin{eqnarray*}}
\newcommand{\eeas}{\end{eqnarray*}}
\newcommand{\ba}{\begin{array}}
\newcommand{\ea}{\end{array}}
\newcommand{\bit}{\begin{itemize}}
\newcommand{\eit}{\end{itemize}}
\newcommand{\ben}{\begin{enumerate}}
\newcommand{\een}{\end{enumerate}}

\newcommand{\eg}{\emph{e.g.,}\xspace}
\newcommand{\ie}{\emph{i.e.,}\xspace}

\newcommand{\hide}[1]{}
\newcommand{\wrt}{w.r.t.\xspace}

\newcommand{\hiddenText}{{\color{gray} hidden text.}}
\newcommand{\hideWithText}[1]{\hiddenText}

\newcommand{\E}{{\mathbb{E}}}

\newcommand{\diag}[1]{\mathrm{diag}\left(#1\right)}

\newcommand{\Real}[1]{ { {\mathbb R}^{#1} } }

\newcommand{\blue}[1]{{\color{blue}#1}}

\newcommand{\linkToPdf}[1]{\href{#1}{\blue{(pdf)}}}
\newcommand{\linkToPpt}[1]{\href{#1}{\blue{(ppt)}}}
\newcommand{\linkToCode}[1]{\href{#1}{\blue{(code)}}}
\newcommand{\linkToWeb}[1]{\href{#1}{\blue{(web)}}}
\newcommand{\linkToVideo}[1]{\href{#1}{\blue{(video)}}}
\newcommand{\linkToMedia}[1]{\href{#1}{\blue{(media)}}}
\newcommand{\award}[1]{\xspace} 

\newcommand{\VarF}{\operatorname{Var_{Fro}}}

\newcommand{\R}{\mathbb{R}}

\newcommand{\sym}[1]{\mathbb{S}^{#1}}
\newcommand{\symfun}{\bm{\mathrm{sym}}}

\let\oldoverline\overline
\newcommand{\widebar}[1]{\mkern 1.5mu\oldoverline{\mkern-1.5mu#1\mkern-1.5mu}\mkern 1.5mu}
\let\overline\widebar

\newcommand{\bmat}{\left[ \begin{array}}
\newcommand{\emat}{\end{array}\right]}

\usepackage[textsize=tiny]{todonotes}

\icmltitlerunning{Non-Uniform Noise-to-Signal Ratio in the REINFORCE Policy-Gradient Estimator}

\begin{document}

\twocolumn[
  \icmltitle{Non-Uniform Noise-to-Signal Ratio \\[1mm] in the REINFORCE Policy-Gradient Estimator}



  \icmlsetsymbol{equal}{*}

  \begin{icmlauthorlist}
    \icmlauthor{Haoyu Han}{seas}
    \icmlauthor{Heng Yang}{seas}
  \end{icmlauthorlist}

  \icmlaffiliation{seas}{School of Engineering and Applied Sciences, Harvard University}

  \icmlcorrespondingauthor{\hspace{-1mm}}{\texttt{\{hyhan,hankyang\}@seas.harvard.edu}}
  \icmlkeywords{Machine Learning, ICML}

  \vskip 0.3in
]
\printAffiliationsAndNotice{}  


\begin{abstract}
    Policy-gradient methods are widely used in reinforcement learning, yet training often becomes unstable or slows down as learning progresses. We study this phenomenon through the \emph{noise-to-signal ratio} (NSR) of a policy-gradient estimator, defined as the estimator variance (noise) normalized by the squared norm of the true gradient (signal). Our main result is that, for (i) finite-horizon linear systems with Gaussian policies and linear state-feedback, and (ii) finite-horizon polynomial systems with Gaussian policies and polynomial feedback, the NSR of the REINFORCE estimator can be characterized exactly—either in closed form or via numerical moment-evaluation algorithms—without approximation. For general nonlinear dynamics and expressive policies (including neural policies), we further derive a general upper bound on the variance. These characterizations enable a direct examination of how NSR varies across policy parameters and how it evolves along optimization trajectories (\eg SGD and Adam). Across a range of examples, we find that the NSR landscape is highly non-uniform and typically increases as the policy approaches an optimum; in some regimes it blows up, which can trigger training instability and policy collapse.
\end{abstract}

\vspace{-8mm}
\section{Introduction}
\label{sec:intro}


Reinforcement learning (RL) studies how agents learn to make sequential decisions from interaction, and has shown successes across a wide range of domains, including games~\cite{mnih15nature-dqnonatari, silver16nature-alphago, silver17nature-alphazero}, public health~\cite{komorowski18nature-rlmedicine, bastani21nature-rlmedicine}, robotics~\cite{andrychowicz20ijrr-learning, song23sciencerobotics-autonomousracing}, and, more recently, language-model alignment~\cite{ouyang22neurips-rlhf, openai24arxiv-gpt4technicalreport}. At the heart of many advanced RL algorithms stands the \emph{policy-gradient theorem}~\cite{sutton98book-rlbook,sutton99neurips-policygradient,williams92ml-reinforce}, which enables gradient-based learning of policy parameters directly from sampled trajectories, without requiring an explicit model of the environment.

Despite their generality, policy-gradient methods are often brittle: training can become unstable or get stuck~\cite{henderson18aaai-rlimplementation,dohare23ewrl-policycollapse}. 
Prior work has identified multiple reasons for this \emph{instability}. One is the \emph{high variance} of stochastic policy-gradient estimators, which can cause updates to poorly track the true ascent direction; this motivates variance-reduction techniques such as baselines, generalized advantage estimation, and actor--critic methods~\cite{sutton99neurips-policygradient, greensmith04jmlr-variancereduction, schulman15iclr-gae, haarnoja18icml-sac}. 
Another is overly aggressive policy updates, which motivates methods that explicitly constrain update size, \eg via Kullback–Leibler trust regions in TRPO~\cite{schulman15icml-trpo} or clipping in PPO~\cite{schulman17arxiv-ppo}. 
Other related failure modes include \emph{policy collapse} induced by Adam momentum~\cite{dohare23ewrl-policycollapse}, degradation due to \emph{improper baseline selection}~\cite{chung21icml-committalbaselines,mei22neurips-committalbaselines}, and loss of plasticity caused by nonstationary environments~\cite{sokar23icml-dormant, kumar23arxiv-maintaining, muppidi24neurips-trac}.

In this work, we are interested in understanding the mechanisms by which \emph{gradient-estimator statistics} translate into optimization behavior. In particular, we uncover the \emph{non-uniform} structure of policy-gradient variance (\wrt policy parameters) and how it evolves along optimization trajectories.
We focus on the vanilla REINFORCE estimator~\cite{sutton99neurips-policygradient}. Despite its simplicity, it already captures sharp behavior near optima and exposes nontrivial variance effects along optimization trajectories; extensions to richer estimators---including those with baselines or entropy regularization---are discussed in the conclusion.


\textbf{Problem setup and REINFORCE estimator.}
We consider a discounted, finite-horizon Markov decision process with state $s_t \in \Real{n}$, action $a_t \in \Real{m}$, reward $r$, horizon $T$, discount factor $\gamma$, and initial state distribution $\rho_0$.
A trajectory is $\tau := (s_0,a_0,s_1,a_1,\dots,s_{T-1},a_{T-1},s_T)$, generated by a stochastic policy $\pi_\theta(a\mid s)$.
The objective is to maximize the expected return over initial states and trajectories:
$J(\theta) := \mathbb{E}_{s_0\sim\rho_0,\ \tau\sim\pi_\theta}\!\left[R(\tau)\right]$, where
$R(\tau):=\sum_{t=0}^{T-1}\gamma^t\, r(s_{t+1},a_t)$ denotes the discounted return.
We use rewards of the form $r(s_{t+1},a_t)$, instead of $r(s_t,a_t)$, to avoid rewards over a constant initial-state distribution and a terminal action that is sampled but never applied. 
Following common practice in continuous-control policy optimization, \eg TRPO\,/\,PPO and Linear-Quadratic-Regulator\,/\,Linear-Quadratic-Gaussian~\cite{schulman15icml-trpo,schulman17arxiv-ppo,fazel18icml-lqrglobal},
we focus on Gaussian policies with diagonal covariance:
$
\pi_\theta(a\mid s)=\mathcal{N}\big(\mu_\theta(s),\,\Sigma\big),
\Sigma=\mathrm{diag}(\sigma^2),
\ell=\log\sigma\in\mathbb{R}^m.
$
We restrict attention to the finite-horizon setting because (i) many widely used benchmarks are episodic~\cite{brockman16arxiv-gym}, and (ii) stability considerations for structured systems (\eg linear--quadratic control under linear Gaussian policies) can be handled cleanly. We consider the REINFORCE policy gradient estimator
\begin{align}\label{eq:policy_gradient_theorem_1}
\nabla_\theta J(\theta)\!=\!\mathbb{E}\!\left[\widehat G_\theta\!:=\!\left(\sum_{t=0}^{T-1} \nabla_\theta \log \pi_\theta(a_t\mid s_t)\right)\!R(\tau)\right]\!\!.\!\!
\end{align}

\vspace{-2mm}
\textbf{Noise-to-signal ratio (NSR).}
To relate gradient-estimator noise to optimization behavior, in addition to the variance of the gradient estimator, we study the \emph{noise-to-signal ratio} (NSR),
defined as the estimator variance (noise) normalized by the squared norm of the true gradient (signal):
\[
\mathrm{NSR}(\theta)
:=\frac{\mathrm{Var}_{\mathrm{Fro}}\big(\widehat G_\theta\big)}{\|\nabla J(\theta)\|_{\mathrm{F}}^2},
\ \ 
\mathrm{Var}_{\mathrm{Fro}}(\widehat G)
:=\mathbb{E}\big[\|\widehat G-\mathbb{E}[\widehat G]\|_{\mathrm{F}}^2\big].
\]
To avoid degeneracy at stationary points, we report NSR only for $\|\nabla J(\theta)\|_{\mathrm{F}}>0$. NSR is a natural measure of how informative a stochastic gradient is relative to its mean, and is closely related to the \emph{strong growth condition} underlying favorable convergence rates and batch-size rules in stochastic optimization~\cite{bottou18siamreview-sgd, cevher19optletters-sgdlinearconvergence}.

\textbf{Contributions.}
Classical analyses of stochastic optimization often assume \emph{uniformly bounded} gradient variance or NSR.
Our central contribution is to show that, in structured RL problems, the NSR and the variance of the REINFORCE estimator can be highly non-uniform and can increase sharply---potentially grow unbounded---as the policy approaches optimality. Our theoretical results include:

\vspace{-3mm}

\begin{itemize}
    \item (Theorem~\ref{thm:lqg-variance-one-step}) \textbf{Closed-form expressions of the NSR for one-step linear systems}, 
    revealing explicit dependence of the NSR on the dynamics, feedback gain, initial-state covariance, and policy covariance.
    \item (Theorems~\ref{thm:lqr-variance-T-compose} \&~\ref{thm:lqr-variance-T-bound}) \textbf{Exact computation of the NSR for multi-step linear systems.} We provide an exact (non-asymptotic) procedure to compute the NSR in finite-horizon linear systems with linear-Gaussian policies via lifted dynamics and Gaussian moment evaluation, together with bounds that expose key scaling factors.
    \item (Proposition~\ref{prop:poly-isserlis}) \textbf{Exact computation of the NSR for polynomial systems.}
    This is an extension of the moment-evaluation approach to polynomial dynamics with polynomial feedback under Gaussian exploration.
    \item (Theorem~\ref{thm:nonlinear-bound}) \textbf{Upper bounds for nonlinear systems.}
    For general nonlinear dynamics and expressive (including neural) Gaussian policies, we provide an upper bound on the variance of the REINFORCE estimator.
\end{itemize}

\vspace{-4mm} 
Through these characterizations, we show that the variance and the NSR of the REINFORCE estimator scale inversely with policy covariance as the policy becomes more deterministic, and scale proportionally with the initial-state covariance as the initial distribution becomes broader. Moreover, we prove they can grow exponentially with the horizon when the closed-loop dynamics are unstable for linear systems.

These theoretical results enable us to numerically investigate how the NSR evolves along optimization trajectories. We show---for both linear and polynomial systems---that the NSR typically \emph{increases} as the policy approaches optimality. Moreover, gradient descent (GD) can drive the policy toward nearly deterministic solutions, whereas stochastic gradient descent (SGD) often fails to converge as the NSR grows. This behavior is consistent with Corollary~\ref{cor:lqg_stationary}, which implies that the optimal policy is deterministic (\ie the policy covariance satisfies $\Sigma\to \mathbf{0}$). Consequently, as the policy nears optimality, the NSR can grow unbounded, providing a concrete mechanism for slowdowns and instability near optima. This highlights an inherent exploration--exploitation tension for REINFORCE-style updates.

\vspace{-1mm}
We stress the value of \emph{exact} characterizations of REINFORCE variance and the NSR. Although both can be estimated from Monte Carlo rollouts, accurate estimation often requires a prohibitively large number of trajectories; with only a modest rollout budget, sampling noise can yield variance and NSR estimates that misrepresent policy-gradient optimization dynamics, as we illustrate in Appendix~\ref{sec:app-add-experiments}.

\vspace{-1mm}
\textbf{Paper organization.}
In Section~\ref{sec:lqg}, we characterize the NSR for linear--quadratic systems under Gaussian policies; we then extend the analysis to polynomial systems (Section~\ref{sec:poly}) and nonlinear dynamics (Section~\ref{sec:nonlinear_new}). Each section presents theory first, then numerical experiments.
All the proofs and related work not covered above are deferred to the appendix.

\section{Linear Systems, Linear Feedback}
\label{sec:lqg}

We start with the linear–quadratic regulator problem with a Gaussian linear policy (LQG). Dynamics and reward are:
\begin{subequations}\label{eq:lqr-dynamics-reward}
\begin{align}
  s_{t+1} &= A s_t + B a_t,\quad s_t\in\mathbb{R}^n,\ a_t\in\mathbb{R}^m, \\
  r(s,a) &= -\big(s^\top Q_s s + a^\top Q_a a\big),\quad Q_s, Q_a \succeq 0.
\end{align}
\end{subequations}
Actions are sampled from a stochastic policy $\pi(a\mid s) \sim \mathcal{N}(K s, \Sigma)$ with $\Sigma \succ 0$, and the initial state satisfies $s_0\sim\mathcal N(0,\Sigma_0)$ with $\Sigma_0 \succeq 0$. The closed-loop dynamics of \eqref{eq:lqr-dynamics-reward} can be written as $s_{t+1} = F s_t + B \epsilon_t$, where $\epsilon_t \sim \mathcal{N}(0, \Sigma)$ and $F := A + BK$. We study the policy gradient and its REINFORCE estimator \wrt the gain matrix $K$ and the policy covariance $\Sigma=\text{diag}(\sigma)^2$, parameterized by the log-standard deviation (log-std) \(\ell=\log\sigma\in\mathbb R^m\).

\textbf{Outline.} In \S\ref{sec:one-step-lqg}, we derive closed-form expressions for one-step LQG and analyze scaling with the initial-state and policy covariances. In \S\ref{sec:multi-step-lqg}, we extend the analysis to multi-step LQG and provide an exact numerical procedure for computing the variance and NSR, along with an upper bound that reveals key scaling factors. In \S\ref{sec:lqg-experiment}, we visualize the NSR along optimization trajectories on a double-integrator example and discuss the observed behavior.

\subsection{NSR of the One-step REINFORCE Estimator}\label{sec:one-step-lqg}

Before stating one-step LQG characterization, we introduce two technical tools used throughout the proof: (i) closed-form score-function gradients for linear Gaussian policies, and (ii) a compact notation for Gaussian moments of quadratic forms (via Wick/Isserlis theorem).

\begin{lemma}\label{lem:lqg-policy-score}
  For a linear Gaussian policy $a_t \sim \mathcal N(K s_t, \Sigma)$ with $\Sigma=\text{diag}(e^{2\ell})$, the score-function gradients are
  \[
  \nabla_K \log\pi(a_t\mid s_t) \;=\; \Sigma^{-1}(a_t - K s_t) s_t^\top \;=\; \Sigma^{-1}\varepsilon_t s_t^\top,
  \]
  \[
  \nabla_{\ell} \log\pi(a_t\mid s_t) \;=\; \Sigma^{-1}(\varepsilon_t \odot \varepsilon_t) - \mathbf 1,
  \]
  where $\odot$ is the Hadamard product, \(\mathbf 1\in\mathbb R^m\) denotes the all-ones vector, and \(\varepsilon_t = a_t - K s_t\sim\mathcal N(0,\Sigma)\).
\end{lemma}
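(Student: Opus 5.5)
The plan is to write out the log-density of the diagonal Gaussian policy and differentiate it term by term. With $\Sigma=\mathrm{diag}(e^{2\ell})$ and $\varepsilon_t = a_t - K s_t$, we have
\[
\log\pi(a_t\mid s_t) = -\tfrac{m}{2}\log(2\pi) - \sum_{i=1}^m \ell_i - \tfrac12 \sum_{i=1}^m e^{-2\ell_i}\,(a_{t,i} - (K s_t)_i)^2 ,
\]
using $\log\det\Sigma = 2\sum_i \ell_i$ and $\Sigma^{-1}=\mathrm{diag}(e^{-2\ell})$. Because $\Sigma$ is diagonal, the quadratic form splits into a sum over coordinates. This makes both gradients entrywise computations.

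\textbf{Gradient with respect to $K$.} Only the quadratic term depends on $K$. The differential of $q(K) = \tfrac12 (a-Ks)^\top\Sigma^{-1}(a-Ks)$ is $dq = -(a-Ks)^\top\Sigma^{-1}(dK)\,s = -\tr\big(s\,(a-Ks)^\top\Sigma^{-1}\,dK\big)$. Identifying the gradient through $dq = \tr(G^\top dK)$ gives $\nabla_K q = -\Sigma^{-1}(a-Ks)s^\top$, using the symmetry of $\Sigma^{-1}$. Since the quadratic term enters the log-density with a minus sign, $\nabla_K\log\pi = \Sigma^{-1}(a_t-Ks_t)s_t^\top = \Sigma^{-1}\varepsilon_t s_t^\top$.

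\textbf{Gradient with respect to $\ell$.} For each $i$,
\[
\partial_{\ell_i}\log\pi = -1 - \tfrac12(-2)e^{-2\ell_i}\varepsilon_{t,i}^2 = e^{-2\ell_i}\varepsilon_{t,i}^2 - 1 .
\]
Stacked over $i$, this reads $\Sigma^{-1}(\varepsilon_t\odot\varepsilon_t)-\mathbf 1$.

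\textbf{Distributional claim.} Finally, $\varepsilon_t\sim\mathcal N(0,\Sigma)$ conditionally on $s_t$, directly from the policy definition. Because this conditional law does not depend on $s_t$, $\varepsilon_t$ is also unconditionally $\mathcal N(0,\Sigma)$ and is independent of $s_t$.

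There is no real obstacle here. The only care points are the sign and transpose conventions in the matrix derivative (gradient with the same shape as $K$), and the factor of 2 that arises from parameterizing by log-std rather than by variance.
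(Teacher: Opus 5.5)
Your proposal is correct and follows essentially the same route as the paper's proof: write the Gaussian log-density, obtain $\nabla_K$ from the differential $d\varepsilon=-(dK)s$ and the trace identification $df=\tr((\nabla_K f)^\top dK)$, and obtain $\nabla_\ell$ coordinatewise using $\Sigma^{-1}_{ii}=e^{-2\ell_i}$ and $\log\det\Sigma=2\sum_i \ell_i$. Your extra remark that $\varepsilon_t$ is independent of $s_t$ (because its conditional law does not depend on $s_t$) is a correct and slightly stronger version of the paper's closing distributional sentence.
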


\begin{lemma}[Gaussian quadratic-product shorthand]\label{lem:IS_shorthand}
Let $x\sim\mathcal N(0,\Omega)$ with $\Omega\succeq 0$. For any symmetric real matrices
$A_1,\dots,A_k$ of compatible dimension, define the shorthand
\begin{equation}\label{eq:IS_shorthand}
\mathrm{IS}_{\Omega}(A_1,\dots,A_k)
\;:=\;
\mathbb E\Big[\prod_{i=1}^k (x^\top A_i x)\Big].
\end{equation}
For $k\in\{1,2,3\}$, the shorthand expands as
\vspace{-3mm}
\begin{subequations}\label{eq:IS_123}
\begin{align}
\mathrm{IS}_{\Omega}(&A)
=\tr(\Omega A),\\
\mathrm{IS}_{\Omega}(A,&B)
=\tr(\Omega A)\tr(\Omega B)+2\,\tr(\Omega A\,\Omega B),\\
\mathrm{IS}_{\Omega}(A,B,&C)
=\tr(\Omega A)\tr(\Omega B)\tr(\Omega C)
+ \nonumber \\
&2\tr(\Omega A\Omega B)\tr(\Omega C)+2\tr(\Omega A\Omega C)\tr(\Omega B)+ \nonumber \\
&2\tr(\Omega A)\tr(\Omega B\Omega C)+8\tr(\Omega A\Omega B\Omega C).
\end{align}
\end{subequations}
If in addition $\Omega\succ 0$ and $A_i\succeq 0$ for all $i$ with $A_i\neq0$, then 
\begin{equation}\label{eq:IS_positive}
\mathrm{IS}_{\Omega}(A_1,\dots,A_k)>0.
\end{equation}
\end{lemma}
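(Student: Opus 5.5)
The plan is to reduce everything to standard Gaussian moments by writing $x=\Omega^{1/2}z$ with $z\sim\mathcal N(0,I)$, so that $x^\top A_i x = z^\top \tilde A_i z$ with $\tilde A_i:=\Omega^{1/2}A_i\Omega^{1/2}$ symmetric. Traces transform consistently: $\tr(\tilde A_{i_1}\cdots\tilde A_{i_j})=\tr(\Omega A_{i_1}\cdots\Omega A_{i_j})$ by cyclicity. It therefore suffices to prove the $k\le 3$ formulas for $\Omega=I$. Rather than expanding sums of Wick pairings index by index, I will use the cumulant generating function of quadratic forms. For a symmetric $M(t)=\sum_i t_i\tilde A_i$ and small $t$, we have $\E[\exp(z^\top M z)]=\det(I-2M)^{-1/2}$. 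Hence
\[
\log \E\big[e^{\sum_i t_i x^\top A_i x}\big]=-\tfrac12\log\det(I-2M)=\sum_{j\ge1}\frac{2^{j-1}}{j}\tr(M^j).
\]

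The joint cumulants then come from the mixed coefficients. The coefficient of $t_1$ gives $\kappa(A)=\tr(\tilde A)$. The coefficient of $t_1t_2$ in $\tr(M^2)$ is $2\tr(\tilde A\tilde B)$, so $\kappa(A,B)=2\tr(\tilde A\tilde B)$. For three factors, the coefficient of $t_1t_2t_3$ in $\frac{4}{3}\tr(M^3)$ is $\frac43\cdot 3\,[\tr(\tilde A\tilde B\tilde C)+\tr(\tilde A\tilde C\tilde B)]$. Since the matrices are symmetric, $\tr(\tilde A\tilde C\tilde B)=\tr((\tilde A\tilde C\tilde B)^\top)=\tr(\tilde B\tilde C\tilde A)=\tr(\tilde A\tilde B\tilde C)$, which gives $\kappa(A,B,C)=8\tr(\tilde A\tilde B\tilde C)$. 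The moment–cumulant formula then assembles the results:
\[
\E[XY]=\kappa_X\kappa_Y+\kappa_{XY},\qquad
\E[XYZ]=\kappa_X\kappa_Y\kappa_Z+\kappa_{XY}\kappa_Z+\kappa_{XZ}\kappa_Y+\kappa_{YZ}\kappa_X+\kappa_{XYZ}.
\]
These reproduce exactly the three displayed expansions. The main bookkeeping risk is the symmetry step that merges the two cyclic orderings into the factor $8$. As a cross-check, I will verify it on $A=B=C=I$ in dimension one, where $\E[z^6]=15=1+2+2+2+8$.

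For positivity, the approach is to show that the product is nonnegative almost surely and positive on a set of positive measure. Each $x^\top A_i x\ge 0$ because $A_i\succeq0$, so the product is nonnegative. Since $A_i\ne0$ and $A_i\succeq 0$, the set $\{x: x^\top A_i x=0\}=\ker A_i$ is a proper linear subspace and hence has Lebesgue measure zero. The finite union of these subspaces is also null. Because $\Omega\succ0$, the law of $x$ has a strictly positive density, so the product is strictly positive with probability one. Its expectation is therefore strictly positive; finiteness holds because Gaussian moments of all orders exist. This step is routine, so I expect the only real obstacle to be the combinatorial constant in the $k=3$ expansion, which the cumulant route handles cleanly.
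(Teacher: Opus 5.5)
Your proof is correct. The positivity part is the same as the paper's: a nonnegative integrand, $\ker A_i$ a proper subspace and hence Lebesgue-null, and $x$ absolutely continuous when $\Omega\succ 0$. For the $k\le 3$ expansions, however, you take a genuinely different route.

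The paper simply invokes Isserlis' (Wick's) theorem and cites Magnus for the formulas. That implicitly means sorting the $(2k-1)!!$ pairings of the $2k$ Gaussian factors by how they link the quadratic forms; for $k=3$ this is $15=1+2+2+2+8$.

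You instead whiten to $\tilde A_i=\Omega^{1/2}A_i\Omega^{1/2}$, which handles singular $\Omega$ since only the PSD square root is needed. You then read the joint cumulants off $-\tfrac12\log\det(I-2M)=\sum_{j\ge1}\frac{2^{j-1}}{j}\tr(M^j)$ and assemble moments with the moment--cumulant formula over set partitions. Your coefficient extractions are right: $2\tr(\tilde A\tilde B)$ for $t_1t_2$, and $4[\tr(\tilde A\tilde B\tilde C)+\tr(\tilde A\tilde C\tilde B)]=8\tr(\tilde A\tilde B\tilde C)$ for $t_1t_2t_3$, where the last step uses symmetry.

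What your route buys is self-containedness and an immediate general-$k$ rule. The joint cumulant is $2^{k-1}$ times the sum of $\tr(\tilde A_{\pi(1)}\cdots\tilde A_{\pi(k)})$ over the $(k-1)!$ cyclic orderings. This makes the constants $1,2,8$ transparent. It would also directly justify the four-argument terms $\mathrm{IS}_{\overline\Sigma}(\cdot,\cdot,\cdot,\cdot)$ that the paper's multi-step LQG proofs use without ever stating their expansion. The paper's route is shorter but delegates all of the combinatorics to a citation.
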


We now present the main result for one-step LQG.

\begin{theorem}[Variance and gradients for one-step LQG]
\label{thm:lqg-variance-one-step}
Let $T=1$. The expected return is $J(K,\Sigma) = \E [r_0]$ where $r_0 =-\big(s_1^\top Q_s s_1+a_0^\top Q_a a_0\big)$ with $s_1$ and $a_0$ follow \eqref{eq:lqr-dynamics-reward}. The true gradients of $J(K,\Sigma)$ are
\begin{align}
  &\nabla_K J = \mathbb E[\widehat G_K]\!=\!-2\,M_{se}^\top\Sigma_0, \\
  &\nabla_\ell J = \mathbb E[\widehat G_\ell]\!=\!-2\,\mathrm{diag}\big(\Sigma M_{ee}\big),
\end{align}
where $M_{ss}:=F^\top Q_sF+K^\top Q_a K$, $M_{se}:=F^\top Q_sB+K^\top Q_a$ and $M_{ee}:=B^\top Q_s B+Q_a$. 

The Frobenius second moment of $\widehat G_K$ decomposes as
\[
\mathbb E\!\left[\|\widehat G_K\|_{\mathrm F}^2\right]
= E_{1,K}+E_{2,K}+E_{3,K}+E_{4,K},
\]
where, with $I_n$ the $n$-dimensional identity matrix and
\[
U := M_{se}M_{se}^\top,\qquad
V := M_{se}\Sigma M_{se}^\top,
\]
we have
\begin{align*}
E_{1,K}
&= \mathrm{IS}_{\Sigma}(\Sigma^{-2})\;
    \mathrm{IS}_{\Sigma_0}\big(I_n, M_{ss}, M_{ss}\big)\\
E_{2,K}
&= \mathrm{IS}_{\Sigma_0}(I_n)\;
    \mathrm{IS}_{\Sigma}\big(\Sigma^{-2}, M_{ee}, M_{ee}\big)\\
E_{3,K}
&= 2\,
    \mathrm{IS}_{\Sigma_0}\big(I_n, M_{ss}\big)\;
    \mathrm{IS}_{\Sigma}\big(\Sigma^{-2}, M_{ee}\big)\\
E_{4,K}
&= 4\Big(
    2\,\mathrm{IS}_{\Sigma_0}\big(I_n,U\big)
    + \mathrm{IS}_{\Sigma}\big(\Sigma^{-2}\big)\;
      \mathrm{IS}_{\Sigma_0}\big(I_n,V\big)
    \Big).
\end{align*}
The Frobenius second moment of $\widehat G_\ell$ decomposes as
\[
  \mathbb E\big[\|\widehat G_\ell\|_2^2\big]
\;=\;E_{1,\ell} + E_{2,\ell} + E_{3,\ell} + E_{4,\ell},
\]
where, defining $S:=\Sigma^{1/2}M_{ee}\Sigma^{1/2}$, we have
    \begin{align*}
    E_{1,\ell}
    &=2m\;\mathrm{IS}_{\Sigma_0}\!\big( M_{ss}, M_{ss}\big),\\
    E_{2,\ell}
    &=\!(2m\!+\!16)\tr(S)^2\!+\!(4m+32)\tr(S^2) + 24 \sum_{i=1}^m S_{ii}^2,\\
    E_{3,\ell}
    &=2\,\mathrm{IS}_{\Sigma_0}\!\big( M_{ss}\big)\cdot (2m+8)\,\tr(S),\\
    E_{4,\ell}
    &=4\,(2m+8)\,\mathrm{IS}_{\Sigma_0}\!\Big(M_{se}\,\Sigma\,M_{se}^\top\Big).
    \end{align*}
\end{theorem}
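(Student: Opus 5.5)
We write $s:=s_0\sim\mathcal N(0,\Sigma_0)$ and $\varepsilon:=\varepsilon_0\sim\mathcal N(0,\Sigma)$, independent. The plan is to express everything in terms of these two variables. Since $a_0=Ks+\varepsilon$ and $s_1=Fs+B\varepsilon$, we get
\[
-r_0 \;=\; s^\top M_{ss}s + 2\,s^\top M_{se}\varepsilon + \varepsilon^\top M_{ee}\varepsilon .
\]
By Lemma~\ref{lem:lqg-policy-score} the estimators are
\[
\widehat G_K=-\Sigma^{-1}\varepsilon s^\top(-r_0)\cdot(-1),
\qquad
\widehat G_\ell=(\Sigma^{-1}(\varepsilon\odot\varepsilon)-\mathbf 1)\,r_0 ,
\]
with the signs tracked carefully.

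\textbf{True gradients.} I would compute the means directly, using that odd moments of $\varepsilon$ or of $s$ vanish. For $\nabla_K J$, only the cross term survives:
\[
\mathbb E\big[\Sigma^{-1}\varepsilon s^\top\cdot 2s^\top M_{se}\varepsilon\big]=2\,\Sigma^{-1}\Sigma M_{se}^\top\Sigma_0 .
\]
Including the minus sign from $r_0=-(\cdot)$, this gives $-2M_{se}^\top\Sigma_0$. For $\nabla_\ell J$, the score has mean zero and is independent of $s$. So only $\mathbb E[(\Sigma^{-1}\varepsilon_i^2-1)\,\varepsilon^\top M_{ee}\varepsilon]$ contributes. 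By Isserlis this equals $2(\Sigma M_{ee})_{ii}$, which yields $-2\,\mathrm{diag}(\Sigma M_{ee})$.

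\textbf{Second moment of $\widehat G_K$.} Here $\|\widehat G_K\|_F^2=(\varepsilon^\top\Sigma^{-2}\varepsilon)(s^\top s)\,r_0^2$. I would expand $r_0^2$ into the six products of the three quadratic/bilinear pieces. By independence and parity, the products $(s^\top M_{ss}s)(s^\top M_{se}\varepsilon)$ and $(s^\top M_{se}\varepsilon)(\varepsilon^\top M_{ee}\varepsilon)$ have zero expectation. The remaining terms factor as follows.
\begin{itemize}
\item The $M_{ss}^2$ term factors into $\mathrm{IS}_\Sigma(\Sigma^{-2})\,\mathrm{IS}_{\Sigma_0}(I_n,M_{ss},M_{ss})$, which is $E_{1,K}$.
\item The $M_{ee}^2$ term gives $E_{2,K}$ in the same way.
\item The cross term $2(s^\top M_{ss}s)(\varepsilon^\top M_{ee}\varepsilon)$ gives $E_{3,K}$.
\item The squared bilinear term $4(s^\top M_{se}\varepsilon)^2$ must be handled jointly in $(s,\varepsilon)$, since it couples them. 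This gives $E_{4,K}$.
\end{itemize}

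For the last term, I would first condition on $s$. Set $w=M_{se}^\top s$ and compute $\mathbb E_\varepsilon[(\varepsilon^\top\Sigma^{-2}\varepsilon)(w^\top\varepsilon)^2]$ by Isserlis. This equals
\[
\tr(\Sigma^{-1})\,w^\top\Sigma w+2\,w^\top\Sigma\Sigma^{-2}\Sigma w=\mathrm{IS}_\Sigma(\Sigma^{-2})\,w^\top\Sigma w+2\|w\|^2 .
\]
Multiplying by $s^\top s$ and taking $\mathbb E_s$ then produces $\mathrm{IS}_\Sigma(\Sigma^{-2})\,\mathrm{IS}_{\Sigma_0}(I_n,V)+2\,\mathrm{IS}_{\Sigma_0}(I_n,U)$, which matches $E_{4,K}$ after multiplying by $4$.

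\textbf{Second moment of $\widehat G_\ell$.} Write $\|\widehat G_\ell\|^2=\phi(\varepsilon)\,r_0^2$ with $\phi=\sum_i(\Sigma^{-1}_{ii}\varepsilon_i^2-1)^2$. Whiten by setting $z=\Sigma^{-1/2}\varepsilon\sim\mathcal N(0,I_m)$, so that $\phi=\sum_i(z_i^2-1)^2$, $\varepsilon^\top M_{ee}\varepsilon=z^\top Sz$, and $\mathbb E\phi=2m$. The same four surviving terms then give the pieces below.
\begin{itemize}
\item The $M_{ss}^2$ term gives $2m\,\mathrm{IS}_{\Sigma_0}(M_{ss},M_{ss})$, which is $E_{1,\ell}$.
\item For the cross term I need $\mathbb E[\phi\, z^\top Sz]=\sum_{i,j}S_{jj}\,\mathbb E[(z_i^2-1)^2z_j^2]$. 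This equals $(2m+8)\tr(S)$, using $\mathbb E[(z^2-1)^2z^2]=10$ and $\mathbb E[(z^2-1)^2]=2$, so that $10+2(m-1)=2m+8$. This gives $E_{3,\ell}$.
\item For the bilinear term, condition on $s$ and set $u=\Sigma^{1/2}M_{se}^\top s$. The needed quantity is $\mathbb E[\phi\,(u^\top z)^2]=(2m+8)\|u\|^2$, which gives $E_{4,\ell}$.
\end{itemize}

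\textbf{Main obstacle.} The hardest step is $E_{2,\ell}=\mathbb E[\phi\,(z^\top Sz)^2]$. This requires sixth- and eighth-order Gaussian moments with non-isotropic weight $S$. I would expand $(z^\top Sz)^2=\sum_{jklp}S_{jk}S_{lp}z_jz_kz_lz_p$ and, for each fixed $i$, sort the index patterns by how many indices coincide with $i$. The relevant moments are $\mathbb E z^8=105$, $\mathbb E z^6=15$, and $\mathbb E z^4=3$. Summing the contributions should produce the three structures $\tr(S)^2$, $\tr(S^2)$ and $\sum_i S_{ii}^2$, where the last arises from patterns with all indices equal to $i$. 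The coefficients $2m+16$, $4m+32$ and $24$ follow from this bookkeeping.

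As a sanity check, I would verify the coefficients on $S=I$ and on $S=e_1e_1^\top$ with $m=1$:
\[
\mathbb E[(z^2-1)^2z^4]=105-30+3=78=18+36+24 .
\]
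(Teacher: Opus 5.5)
Your proposal is correct and follows essentially the paper's route: the decomposition $r_0=-(x+2y+z)$, parity and independence to kill the $xy$ and $yz$ terms, conditioning on $s_0$ for the $y^2$ terms, and whitening $\varepsilon=\Sigma^{1/2}\xi$ with coordinatewise moments of $w(\xi)=\sum_i(\xi_i^2-1)^2$ (Lemma~\ref{thm:wz_std_lemma}); the $E_{2,\ell}$ bookkeeping you defer does close, using $\mathbb E[w\,z_j^4]=6m+72$ and $\mathbb E[w\,z_j^2z_l^2]=2m+16$ for $j\neq l$. One cosmetic fix: your displayed $\widehat G_K=-\Sigma^{-1}\varepsilon s^\top(-r_0)\cdot(-1)$ evaluates to $-\Sigma^{-1}\varepsilon s^\top r_0$, whereas the correct estimator (and the one your mean computation actually uses) is $\Sigma^{-1}\varepsilon s^\top r_0$.
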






Theorem~\ref{thm:lqg-variance-one-step} characterizes the mean and second moment of the REINFORCE estimator; together, they determine its variance and yield a closed-form NSR.

\textbf{Analysis of NSR.}
To intuitively study the scaling factors of the NSR, we consider the isotropic setting $\Sigma=\sigma^2 I$ and $\Sigma_0=\sigma_0^2 I$ for scalars $\sigma,\sigma_0>0$.
Using Lemma~\ref{lem:IS_shorthand} and the fact that $M_{ss}, M_{ee}, U, V\succeq 0$, it is easy to see that $E_{i,K}$ and $E_{i,\ell}$ are nonzero when $A,B\neq 0$.
Assuming moreover that $\E[\widehat G_K]$ and $\E[\widehat G_\ell]$ are nonzero, all terms in the second moments can be expressed using only $(\sigma,\sigma_0)$ (absorbing constants into $\Theta(\cdot)$):
\begin{align*}
  &E_{1,K}=\Theta\!\left(\sigma_0^{6}\sigma^{-2}\right),\ \ 
  E_{2,K}=\Theta(\sigma^{2}\sigma_0^{2}),\ \ 
  E_{3,K}=\Theta(\sigma_0^{4}),\\
  &E_{4,K}=\Theta(\sigma_0^{4}),\quad
  E_{1,\ell}=\Theta(\sigma_0^{4}),\quad
  E_{2,\ell}=\Theta(\sigma^{4}),\\
  &E_{3,\ell}=\Theta(\sigma^{2}\sigma_0^{2}),\quad
  E_{4,\ell}=\Theta(\sigma^{2}\sigma_0^{2}),\\
  &\|\E[\widehat G_K]\|_{\mathrm F}^2=\Theta(\sigma_0^{4}),\qquad
  \|\E[\widehat G_\ell]\|_{\mathrm F}^2=\Theta(\sigma^{4}).
\end{align*}
Consequently, letting $\widehat G = [\widehat G_K, \widehat G_\ell]$, we have
\[
\E[\|\widehat G\|_{\mathrm F}^2]
\!=\!\Theta\!\left(\frac{\sigma_0^{6}}{\sigma^{2}}\!+\!\sigma^{2}\sigma_0^{2}\!+\!\sigma_0^{4}\!+\!\sigma^{4}\right)\!\!,
\
\|\E[\widehat G]\|_{\mathrm F}^2=\Theta(\sigma_0^{4}+\sigma^{4})
\]
and the noise-to-signal ratio satisfies
\[
\vspace{-2mm}
\mathrm{NSR}:=\frac{\VarF(\widehat G)}{\|\E[\widehat G]\|_{\mathrm F}^2}
=\Theta\!\left(\frac{\frac{\sigma_0^{6}}{\sigma^{2}}+\sigma^{2}\sigma_0^{2}+\sigma_0^{4}+\sigma^{4}}{\sigma_0^{4}+\sigma^{4}}\right)-1.
\]
Moreover, by the AM--GM inequality, we have $0\le \sigma^{2}\sigma_0^{2}\le (\sigma^{4}+\sigma_0^{4})/2$, hence
\[
\frac{\sigma^{2}\sigma_0^{2}+\sigma_0^{4}+\sigma^{4}}{\sigma_0^{4}+\sigma^{4}}=\Theta(1),
\]
and therefore
\[
\vspace{-2mm}
\mathrm{NSR}
\!=\!\Theta\!\left(\frac{(\sigma_0^{6}/\sigma^{2})}{\sigma_0^{4}+\sigma^{4}}+1\right)-1
\!=\!\Theta\!\left(\frac{(\sigma_0/\sigma)^{6}}{1+(\sigma_0/\sigma)^{4}}+1\right)-1.
\]
Let $\alpha:=\sigma_0^2/\sigma^2$. Then $\alpha^{3}/(1+\alpha^{2})$ is large only when $\alpha$ is large, \ie when $\sigma_0 \gg \sigma$. In this regime,
\begin{equation}\label{eq:lqr-nsr-scaling}
\mathrm{NSR}=\Theta(\alpha)=\Theta\!\left(\frac{\sigma_0^{2}}{\sigma^{2}}\right).
\end{equation}
On the other hand, variance is dominated by the $E_{1,K}$ term:
$$
\VarF(\widehat G) = \Theta\!\left(\frac{\sigma_0^{6}}{\sigma^{2}}\right).
\vspace{-2mm}
$$
In words, when the policy covariance $\Sigma$ is small and the initial-state covariance $\Sigma_0$ is large, the NSR typically grows linearly with
\(\tr(\Sigma^{-1})\tr(\Sigma_0)\), and the variance grows with
\(\tr(\Sigma^{-1})\tr(\Sigma_0)^3\). Thus, both the variance and the NSR can blow up in this regime, suggesting that stochastic-gradient updates may even fail to remain within a bounded neighborhood. To formalize the connection between variance blow-up and training instability, we state Theorem~\ref{thm:lqg-escape-ball}.

\begin{theorem}[Escape from a fixed ball]\label{thm:lqg-escape-ball}
Let $\{\sigma_k\}_{k\ge 0}$ follow the stochastic-gradient update
\[
\sigma_{k+1}
=
\sigma_k-\eta\bigl(g(\sigma_k)+\varepsilon_{k+1}\bigr),
\qquad \eta>0,
\]
where $\eta$ is the learning rate, $g$ is the deterministic gradient, and
$\varepsilon_{k+1}$ is a centered gradient-noise term. Suppose there exists a
constant $c>0$ such that, for all $k$ and all $\sigma_k\neq 0$,
\[
\mathbb E[\varepsilon_{k+1}^2\mid \sigma_k]
\ge
\frac{c}{\sigma_k^2}.
\]
Assume further that $g$ is continuous at $0$ and $g(0)=0$. Then, for
every radius $R>0$, there exists $\delta>0$ such that
\[
0<|\sigma_k|\le \delta
\quad\Longrightarrow\quad
\mathbb P\bigl(|\sigma_{k+1}|>R\mid \sigma_k\bigr)>0.
\]
\vspace{-4mm}
\end{theorem}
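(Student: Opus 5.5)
The plan is a proof by contradiction. The only property of the noise we use is the lower bound on its conditional second moment, which grows like $c/\sigma_k^2$ as $\sigma_k\to 0$. A random variable that is almost surely bounded by a fixed constant cannot have arbitrarily large second moment. So if $\sigma_{k+1}$ stayed inside the ball of radius $R$ almost surely, the noise $\varepsilon_{k+1}$ would be uniformly bounded, and this clashes with the $c/\sigma_k^2$ lower bound once $\sigma_k$ is small enough.

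\textbf{Step 1 (control the drift).} Since $g$ is continuous at $0$ and $g(0)=0$, there is $\delta_0>0$ such that $|g(\sigma)|\le 1$ whenever $|\sigma|\le\delta_0$. Any other fixed constant would also work; we only need $g$ bounded near the origin.

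\textbf{Step 2 (choose $\delta$).} Set $M := R+\delta_0+\eta$. Choose
\[
\delta := \min\Bigl\{\delta_0,\ \tfrac{1}{2}\sqrt{c}\,\eta/M\Bigr\},
\]
so that $c/\delta^2 \ge 4M^2/\eta^2 > M^2/\eta^2$.

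\textbf{Step 3 (contradiction).} Fix $\sigma_k$ with $0<|\sigma_k|\le\delta$ and suppose $\mathbb P(|\sigma_{k+1}|>R\mid\sigma_k)=0$. Rearranging the update gives
\[
\eta\,\varepsilon_{k+1} = \sigma_k-\sigma_{k+1}-\eta\, g(\sigma_k).
\]
Hence, conditionally almost surely,
\[
|\varepsilon_{k+1}| \le \bigl(|\sigma_k|+R+\eta|g(\sigma_k)|\bigr)/\eta \le (\delta_0+R+\eta)/\eta = M/\eta.
\]
Therefore $\mathbb E[\varepsilon_{k+1}^2\mid\sigma_k]\le M^2/\eta^2$. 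On the other hand, the hypothesis gives
\[
\mathbb E[\varepsilon_{k+1}^2\mid\sigma_k]\ge c/\sigma_k^2\ge c/\delta^2 > M^2/\eta^2,
\]
which is a contradiction. So $\mathbb P(|\sigma_{k+1}|>R\mid\sigma_k)>0$.

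The step needing the most care is Step 3: conditional expectations and almost-sure bounds must be handled at a fixed value of $\sigma_k$. I would phrase everything through a regular conditional distribution of $\varepsilon_{k+1}$ given $\sigma_k$; alternatively, the argument can be carried out pointwise on the event $\{0<|\sigma_k|\le\delta\}$. Centeredness of the noise is not needed for this argument.
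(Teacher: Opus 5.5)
Your proof is correct and is essentially the paper's argument. The paper also uses that a conditional second moment exceeding $((R+B_\delta)/\eta)^2$ forces $|\varepsilon_{k+1}|$ above that threshold with positive probability, and it bounds the drift $m(\sigma)=\sigma-\eta g(\sigma)$ near $0$ to conclude $|\sigma_{k+1}|>R$. You run the same argument as a contradiction, and you replace the paper's $B_\delta=\sup_{|\sigma|\le\delta}|m(\sigma)|$ by the cruder constant $M=R+\delta_0+\eta$, which gives an explicit $\delta$.
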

Theorem~\ref{thm:lqg-escape-ball} states that, if the variance grows at least as fast as $1/\sigma^2$ when $\sigma$ is small, then there is a positive probability that the next iterate will escape any fixed ball around $0$.

\textbf{Example.} Consider the double integrator system
\begin{equation}\label{eq:double-integrator}
A = \begin{bmatrix}
1 & h \\ 0 & 1
\end{bmatrix},\ \ 
B = \begin{bmatrix}
0 \\ h
\end{bmatrix},\ \ 
Q_s = I_2, \ \ 
Q_a = 0.01,
\end{equation}
with \(h=0.1\) and policy \(K = [-1.0, -3.0
]\). 
Figure~\ref{fig:lqr-variance-T1-scaling} visualizes the NSR \wrt the initial-state and policy covariances. The upper-left region (small $\Sigma=\sigma^2 I$, large $\Sigma_0=\sigma_0^2 I$) shows a rapid NSR blow-up, matching the prediction in \eqref{eq:lqr-nsr-scaling}.


\begin{figure}
    \centering
    \includegraphics[width=0.4\textwidth]{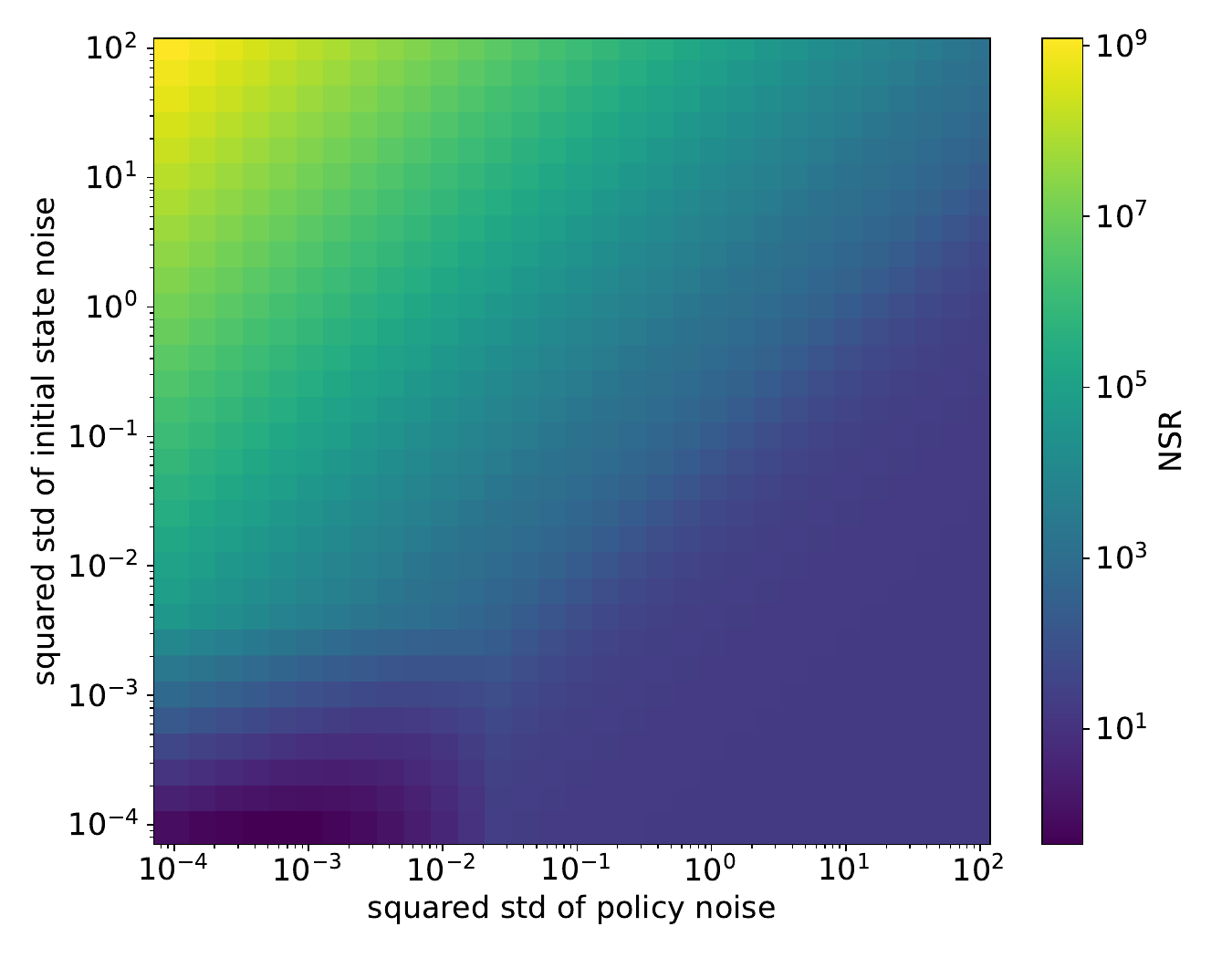}
    \caption{NSR of the REINFORCE estimator in one-step LQG with isotropic \(\Sigma=\sigma^2 I\) and \(\Sigma_0=\sigma_0^2 I\) for double integrator \eqref{eq:double-integrator}.
    }
    \label{fig:lqr-variance-T1-scaling}
    \vspace{-4mm}
\end{figure}

\subsection{NSR of the Multi-step REINFORCE Estimator}\label{sec:multi-step-lqg}

We reduce the \(T\)-step system to a larger one-step system by lifting the dynamics and the policy to block form.

\begin{theorem}[Lifted \(T\)-step system]\label{thm:lqr-lift}
Let the stacked states, actions, and noises be
  \begin{align*}
      \bar s :=\begin{bmatrix}s_0\\ \vdots\\ s_{T-1}\end{bmatrix}\in\mathbb R^{nT},\quad
      \bar s^+:=\begin{bmatrix}s_1\\ \vdots\\ s_T\end{bmatrix}\in\mathbb R^{nT},\quad \\
      \bar a:=\begin{bmatrix}a_0\\ \vdots\\ a_{T-1}\end{bmatrix}\in\mathbb R^{mT},\quad
      \bar \varepsilon:=\begin{bmatrix}\varepsilon_0\\ \vdots\\ \varepsilon_{T-1}\end{bmatrix}\in\mathbb R^{mT}.
  \end{align*}
There exist matrices
  \begin{align*}
  \mathcal F_S,\ \mathcal F_S^+ \in \mathbb R^{nT\times n},\quad \mathcal F_E, \ \mathcal F_E^+ \in \mathbb R^{nT\times mT}\\
  \mathcal K_S\in\mathbb R^{mT\times n},\ \ \mathcal K_E\in\mathbb R^{mT\times mT},
  \end{align*}
  such that
  \[
  \bar s=\mathcal F_S s_0+\mathcal F_E \bar\varepsilon,\ \ 
  \bar s^+=\mathcal F_S^+ s_0+\mathcal F_E^+ \bar\varepsilon,\ \ 
  \bar a=\mathcal K_S s_0+\mathcal K_E \bar\varepsilon.
  \]
Let \(D_\gamma:=\mathrm{diag}(\gamma^0,\gamma^1,\ldots,\gamma^{T-1})\) and define the discounted block weights
\[
\mathsf Q_{s,\gamma}:=D_\gamma\otimes Q_s,\quad
\mathsf Q_{a,\gamma}:=D_\gamma\otimes Q_a,\quad
\overline\Sigma:=I_T\otimes \Sigma.
\]
Then the discounted return admits a single-step quadratic decomposition
\begin{align}\label{eq:lqr-lift-discount-decompose}
&R(\tau)=-(x+2y+z), \\
&x:=s_0^\top \overline M_{ss} s_0,\;\;
y:=s_0^\top \overline M_{se}\,\bar \varepsilon,\;\;
z:=\bar \varepsilon^\top \overline M_{ee}\,\bar \varepsilon,
\end{align}
where the lifted blocks are
\begin{align}
\overline M_{ss}&:= {\mathcal F_S^+}^\top \mathsf Q_{s,\gamma} \mathcal F_S^+ +\mathcal K_S^\top \mathsf Q_{a,\gamma} \mathcal K_S, \label{eq:lqr-lift-discount-Mss}\\
\overline M_{se}&:= {\mathcal F_S^+}^\top \mathsf Q_{s,\gamma} \mathcal F_E^+ +\mathcal K_S^\top \mathsf Q_{a,\gamma} \mathcal K_E, \label{eq:lqr-lift-discount-Mse}\\
\widebar M_{ee}&:= {\mathcal F_E^+}^\top \mathsf Q_{s,\gamma} \mathcal F_E^+ +\mathcal K_E^\top \mathsf Q_{a,\gamma} \mathcal K_E. \label{eq:lqr-lift-discount-Mee}
\end{align}
\end{theorem}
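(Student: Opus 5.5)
We prove the statement by constructing the lifted matrices explicitly through unrolling the closed-loop recursion $s_{t+1}=Fs_t+B\varepsilon_t$ with $F=A+BK$. Induction on $t$ gives
\[
s_t=F^t s_0+\sum_{j=0}^{t-1}F^{t-1-j}B\varepsilon_j ,
\]
and hence $a_t=Ks_t+\varepsilon_t$.

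We define $\mathcal F_S$ as the block column with $t$-th block $F^t$, for $t=0,\dots,T-1$. The matrix $\mathcal F_S^+$ is the same object with blocks $F^{t+1}$. We take $\mathcal F_E$ to be the block lower-triangular (Toeplitz) matrix whose $(t,j)$ block is $F^{t-1-j}B$ for $j<t$ and zero otherwise, so it is strictly lower triangular. Correspondingly, $\mathcal F_E^+$ has $(t,j)$ block $F^{t-j}B$ for $j\le t$, which makes it lower triangular including the diagonal.

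The action matrices then follow from $a_t=Ks_t+\varepsilon_t$: we set $\mathcal K_S:=(I_T\otimes K)\mathcal F_S$ and $\mathcal K_E:=(I_T\otimes K)\mathcal F_E+I_{mT}$. With these choices the three identities $\bar s=\mathcal F_S s_0+\mathcal F_E\bar\varepsilon$, $\bar s^+=\mathcal F_S^+s_0+\mathcal F_E^+\bar\varepsilon$ and $\bar a=\mathcal K_Ss_0+\mathcal K_E\bar\varepsilon$ hold blockwise, by direct comparison with the unrolled formula. The dimensions also match those claimed in the statement.

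Next we rewrite the return. By definition,
\[
R(\tau)=-\sum_t\gamma^t\big(s_{t+1}^\top Q_s s_{t+1}+a_t^\top Q_a a_t\big).
\]
Because $D_\gamma\otimes Q_s$ is block diagonal with blocks $\gamma^tQ_s$, the state part equals $(\bar s^+)^\top\mathsf Q_{s,\gamma}\bar s^+$. In the same way, the action part equals $\bar a^\top \mathsf Q_{a,\gamma}\bar a$. We substitute the affine expressions in $(s_0,\bar\varepsilon)$ and expand each quadratic form $(Ps_0+E\bar\varepsilon)^\top W(Ps_0+E\bar\varepsilon)$. This gives three terms:
\[
s_0^\top P^\top WPs_0,\qquad 2s_0^\top P^\top WE\bar\varepsilon,\qquad \bar\varepsilon^\top E^\top WE\bar\varepsilon .
\]
The cross term appears with the factor $2$ because $W$ is symmetric: $Q_s,Q_a$ are symmetric and $D_\gamma$ is diagonal. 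Summing the state and action contributions and collecting terms gives exactly $\overline M_{ss},\overline M_{se},\overline M_{ee}$ as in \eqref{eq:lqr-lift-discount-Mss}--\eqref{eq:lqr-lift-discount-Mee}. Therefore $R=-(x+2y+z)$.

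The only step that needs care is the index bookkeeping. The reward uses $s_{t+1}$ but $a_t$, so the state cost must be built from $\mathcal F^+$ while the action cost is built from the unshifted $\mathcal F_S,\mathcal F_E$ through $\mathcal K$. The discount $\gamma^t$ must attach to the same $t$ in both blocks, which is why both weights use the same $D_\gamma$. We also have to get the triangular structure right: $\varepsilon_t$ affects $s_{t+1}$ and $a_t$, but not $s_t$. This is why $\mathcal F_E$ is strictly lower triangular, while $\mathcal F_E^+$ and $\mathcal K_E$ have nonzero diagonal blocks ($B$ and $I_m$, respectively).

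Finally, we check that $\overline M_{ss}$ and $\overline M_{ee}$ are symmetric positive semidefinite. This follows because each is of the form $P^\top WP$ with $W\succeq0$. This property is what later lets the $\mathrm{IS}$ machinery of Lemma~\ref{lem:IS_shorthand} be applied in the lifted one-step form.
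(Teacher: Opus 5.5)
Your proposal is correct and follows the paper's route. You unroll $s_{t+1}=Fs_t+B\varepsilon_t$ and read off the block forms of $\mathcal F_S,\mathcal F_S^+,\mathcal F_E,\mathcal F_E^+$ and of $\mathcal K_S=(I_T\otimes K)\mathcal F_S$, $\mathcal K_E=(I_T\otimes K)\mathcal F_E+I_{mT}$, which coincide with the paper's explicit matrices; you also carry out the expansion of $R(\tau)$ into $-(x+2y+z)$, a step the paper's proof leaves implicit.
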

With the lifted one-step system, we can study the variance and gradients of the REINFORCE estimator.


\begin{theorem}[Variance and gradients of multi-step LQG]
\label{thm:lqr-variance-T-compose}
Consider the lifted multi-step LQG in Theorem~\ref{thm:lqr-lift}, the following statements hold.

\textbf{(i) Second moments.}
With the lifted notation $R(\tau)=-(x+2y+z)$ and the shorthand
$\mathrm{IS}_\Omega(\cdot)$ of Lemma~\ref{lem:IS_shorthand}, we have the decompositions of the second moments:
$$
\mathbb E\!\left[\|\widehat G_K\|_{\mathrm F}^2\right]=\!\!\!\sum_{t,p=0}^{T-1}\sum_{i=1}^{6}E^{K}_{i,tp},\ \ 
\mathbb E\!\left[\|\widehat G_\ell\|_{2}^2\right]=\!\!\!\sum_{t,p=0}^{T-1}\sum_{i=1}^{4}E^{\ell}_{i,tp}
$$
where the terms $E^{K}_{1,tp},\dots,E^{K}_{6,tp}$ are given by the six-term decompositions \eqref{eq:EtpFtp_def}--\eqref{eq:Ftp_final} in the proof, and $E^{\ell}_{1,tp},\dots,E^{\ell}_{4,tp}$ are given by
\eqref{eq:Atp_ell_final}, \eqref{eq:Btp_ell_final}, \eqref{eq:Ctp_ell_final}, \eqref{eq:Dtp_ell_final}.

\textbf{(ii) Mean gradients.}
Let $P_t:=\mathrm{Cov}(s_t)$ satisfy the covariance recursion
\[
P_{t+1}=F P_t F^\top + B\Sigma B^\top,\quad P_0=\Sigma_0
\]
and let $\{\Lambda_t\}_{t=1}^{T}$ satisfy the backward recursion
\[
\Lambda_T=\tfrac{1}{\gamma}Q_s,\quad
\Lambda_t=\tfrac{1}{\gamma}Q_s + K^\top Q_a K + \gamma F^\top \Lambda_{t+1} F.
\]
Then the gradients of the objective $J(K,\Sigma)$ are
\begin{align}
\nabla_K J
&=
-2\sum_{t=0}^{T-1}\gamma^{t}\Big( Q_a K P_t \;+\; \gamma\, B^\top \Lambda_{t+1} F\, P_t \Big)\label{eq:mean_grad_K},\!\!\\
\nabla_{\ell} J 
&=
-2\mathrm{diag}(\Sigma\sum_{t=0}^{T-1}\gamma^t\Big( Q_a + \gamma B^\top \Lambda_{t+1} B \Big)).
\end{align}
\end{theorem}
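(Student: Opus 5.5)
The plan is to treat the lifted system of Theorem~\ref{thm:lqr-lift} as a single Gaussian problem in the joint vector $w:=(s_0,\bar\varepsilon)\sim\mathcal N\big(0,\mathrm{blkdiag}(\Sigma_0,\overline\Sigma)\big)$, in which the return and all scores are low-degree polynomials. By Lemma~\ref{lem:lqg-policy-score} and the lifting $s_t=(\mathcal F_S)_t s_0+(\mathcal F_E)_t\bar\varepsilon$, the $K$-estimator is
\[
\widehat G_K=-\sum_{t=0}^{T-1}\Sigma^{-1}\varepsilon_t s_t^\top\,(x+2y+z),
\qquad
\|\widehat G_K\|_{\mathrm F}^2=\sum_{t,p}\tr\!\big(s_t\varepsilon_t^\top\Sigma^{-2}\varepsilon_p s_p^\top\big)(x+2y+z)^2 .
\]
This explains the double sum over $(t,p)$. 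Each summand is a polynomial of degree $8$ in $w$. Writing $\varepsilon_t=E_t\bar\varepsilon$ and $s_t=S_t s_0+\mathcal E_t\bar\varepsilon$ with selector/row-block matrices, I will expand $(x+2y+z)^2=x^2+4xy+4y^2+2xz+4yz+z^2$ and $s_t^\top s_p$ into its $s_0s_0$, cross, and $\bar\varepsilon\bar\varepsilon$ parts. Terms odd in $s_0$ or odd in $\bar\varepsilon$ vanish because $s_0\perp\bar\varepsilon$ and both are centered. The surviving even--even products factor as $\mathbb E[\text{poly}(s_0)]\cdot\mathbb E[\text{poly}(\bar\varepsilon)]$. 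Each factor is a product of quadratic forms, possibly after symmetrizing $u^\top Mv$-type pairs, and is therefore expressible through $\mathrm{IS}_{\Sigma_0}$ and $\mathrm{IS}_{\overline\Sigma}$ of Lemma~\ref{lem:IS_shorthand}. Grouping the survivors by their $(s_0,\bar\varepsilon)$-degree pattern should give the six families $E^K_{i,tp}$. These generalize $E_{1,K},\dots,E_{4,K}$ of Theorem~\ref{thm:lqg-variance-one-step}; the two extra families come from the $\bar\varepsilon$-dependence of $s_t$, which is absent when $T=1$.

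The $\ell$-part is handled the same way. Here $\widehat G_\ell=-\sum_t(\Sigma^{-1}(\varepsilon_t\odot\varepsilon_t)-\mathbf 1)(x+2y+z)$, so
\[
\|\widehat G_\ell\|_2^2=\sum_{t,p}\sum_{j}\big(\varepsilon_{t,j}^2/\sigma_j^2-1\big)\big(\varepsilon_{p,j}^2/\sigma_j^2-1\big)(x+2y+z)^2 .
\]
The score factor involves only $\bar\varepsilon$. After killing odd terms, the four survivors are $x^2$, $z^2$, $xz$ and $y^2$, matching the four terms $E^\ell_{i,tp}$. Each reduces to an $\mathrm{IS}_{\Sigma_0}$ factor times a Gaussian moment in $\bar\varepsilon$ of order up to $8$. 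I will evaluate the latter coordinatewise using $\mathbb E[\xi^4]=3$, $\mathbb E[\xi^6]=15$, $\mathbb E[\xi^8]=105$ for standardized $\xi$, splitting into the cases $t=p$ and $t\neq p$. This case split is where constants like $2m+8$ in the one-step formulas come from.

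For part (ii), I will not differentiate the estimator. Instead I compute $J$ in closed form and differentiate it directly, since $\mathbb E[\widehat G]=\nabla J$ by \eqref{eq:policy_gradient_theorem_1}. With $P_t$ defined as in the statement, $\mathbb E[s_{t+1}^\top Q_s s_{t+1}]=\tr(Q_s(FP_tF^\top+B\Sigma B^\top))$ and $\mathbb E[a_t^\top Q_aa_t]=\tr(Q_a(KP_tK^\top+\Sigma))$. Hence $J$ is a linear functional of $\{P_t\}$ plus $\Sigma$-terms. I will introduce $\Lambda_t$ as the adjoint (costate) of the Lyapunov recursion, so that $J=-\sum_t\gamma^t\tr\big((Q_a+\gamma B^\top\Lambda_{t+1}B)\Sigma\big)-\tr(\cdot\,\Sigma_0)$, with the $1/\gamma$ normalization absorbing the index shift of $r(s_{t+1},a_t)$. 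Differentiating in $K$ through both $F=A+BK$ and $K^\top Q_aK$, and collecting terms by the adjoint identity $\sum_t\tr(\Lambda_{t}\,dP_t)=\ldots$, yields \eqref{eq:mean_grad_K}. For $\ell$, the chain rule $\partial\Sigma/\partial\ell_j=2\Sigma_{jj}e_je_j^\top$ gives the diagonal formula.

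I expect the main obstacle to be the bookkeeping in part (i) for $\widehat G_K$. The cross terms in which $s_t$ carries $\bar\varepsilon$-components create non-symmetric bilinear forms such as $\bar\varepsilon^\top E_t^\top\Sigma^{-2}E_p\bar\varepsilon\cdot\bar\varepsilon^\top\mathcal E_t^\top\mathcal E_p\bar\varepsilon$. To fit these into $\mathrm{IS}_{\overline\Sigma}$ I will symmetrize each matrix as $\tfrac12(M+M^\top)$, which leaves the quadratic forms unchanged. I will also need to verify carefully that odd-moment cancellations really reduce the count to exactly six families. I will check the final formulas by setting $T=1$ and recovering Theorem~\ref{thm:lqg-variance-one-step}.
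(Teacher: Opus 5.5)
Your overall architecture is the paper's. You use the double sum over $(t,p)$, expand $(x+2y+z)^2$ and split $s_t^\top s_p$ into $s_0s_0$, cross and $\bar\varepsilon\bar\varepsilon$ parts, apply parity cancellation, and symmetrize before invoking $\mathrm{IS}$. For part (ii) you write $J$ in closed form in the $P_t$ with $\Lambda_t$ as costate of the Lyapunov recursion, which is exactly the paper's Lagrangian/envelope computation. The gap is in the step meant to make part (i) computable. You assert that every surviving even--even product factors as $\mathbb E[\mathrm{poly}(s_0)]\cdot\mathbb E[\mathrm{poly}(\bar\varepsilon)]$, so that only $\mathrm{IS}_{\Sigma_0}$ and $\mathrm{IS}_{\overline\Sigma}$ are needed. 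This fails for every survivor containing $y=s_0^\top\overline M_{se}\bar\varepsilon$ or the cross part $s_0^\top J_{tp}\bar\varepsilon$ of $s_t^\top s_p$. Examples are $(\bar\varepsilon^\top W^K_{tp}\bar\varepsilon)(s_0^\top G_{tp}s_0)\,y^2$, $(\bar\varepsilon^\top W^K_{tp}\bar\varepsilon)(s_0^\top J_{tp}\bar\varepsilon)\,x\,y$, and $(g_t^\top g_p)\,y^2$. These are even in $s_0$ and in $\bar\varepsilon$ separately, but the bilinear coupling does not split into (function of $s_0$)$\times$(function of $\bar\varepsilon$). Symmetrizing $\tfrac12(M+M^\top)$ does not rescue this: even after writing such a pair as $\bar\varepsilon^\top L\bar\varepsilon$, the matrix $L$ depends on $s_0$. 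These are exactly the families $E^K_{4,tp},E^K_{5,tp},E^K_{6,tp}$ and $E^\ell_{4,tp}$, so as stated your plan yields no formula for them.

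The missing device is the paper's conditioning on $s_0$ plus trace-to-quadratic-form conversion. Given $s_0$, write $y^2=\bar\varepsilon^\top H(s_0)\bar\varepsilon$ with $H(s_0)=\overline M_{se}^\top s_0s_0^\top\overline M_{se}$, and $(s_0^\top J_{tp}\bar\varepsilon)\,y=\bar\varepsilon^\top L_{tp}(s_0)\bar\varepsilon$ with $L_{tp}(s_0)=J_{tp}^\top s_0s_0^\top\overline M_{se}$. Apply $\mathrm{IS}_{\overline\Sigma}$ with these $s_0$-dependent matrices. Each resulting trace is linear in $s_0s_0^\top$, so by cyclicity it equals $s_0^\top V s_0$ for an explicit $V$, e.g. $\overline M_{se}\overline\Sigma\,\symfun(W^K_{tp})\,\overline\Sigma\,\overline M_{se}^\top$. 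Only then apply $\mathrm{IS}_{\Sigma_0}$.

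Alternatively, use the joint vector $w$ you introduced but never exploited. $R$, $y$ and $s_t^\top s_p$ are genuine quadratic forms in $w$, so each $(t,p)$ summand is a single four-argument $\mathrm{IS}_{\Omega}$ with $\Omega=\mathrm{blkdiag}(\Sigma_0,\overline\Sigma)$. This needs the $k=4$ expansion, which Lemma~\ref{lem:IS_shorthand} does not list, though the paper's own $E^K_{2,tp}$ needs it too.

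Two smaller points. First, grouping survivors by $(s_0,\bar\varepsilon)$-degree gives only four classes, $(6,2),(4,4),(2,6),(0,8)$. The paper's six families are indexed by the monomial of $R^2$: $x^2,z^2,xz,y^2$ are paired with $s_0^\top G_{tp}s_0+\bar\varepsilon^\top U_{tp}\bar\varepsilon$, and $xy,yz$ only with $s_0^\top J_{tp}\bar\varepsilon$. Second, your coordinatewise evaluation of the $\ell$-part is a valid alternative to the paper's selectors $Q_{t,i}$ and the cancellation $\sum_i d_iQ_{t,i}=W^\ell_t$. However, because $\overline M_{ee}$ and $\overline M_{se}$ couple time blocks, the case split must also track where the indices of $\overline\Sigma^{1/2}\overline M_{ee}\overline\Sigma^{1/2}$ fall relative to $t$ and $p$, not only whether $t=p$. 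The $y^2$ term there again requires conditioning on $s_0$, as with $u(s_0)$ in the one-step lemma.
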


\begin{corollary}[Stationary points of $J(K,\Sigma)$]
\label{cor:lqg_stationary}
Under the assumptions of Theorem~\ref{thm:lqr-variance-T-compose}, if $Q_a \succ 0$, then $\nabla_\ell J=0$ if and only if $\Sigma=\mathbf 0$.
\end{corollary}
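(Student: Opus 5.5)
The plan is to read off the stationarity condition directly from the closed-form mean gradient in Theorem~\ref{thm:lqr-variance-T-compose}(ii):
\[
\nabla_\ell J=-2\,\mathrm{diag}\bigl(\Sigma W\bigr),\qquad W:=\sum_{t=0}^{T-1}\gamma^t\bigl(Q_a+\gamma B^\top\Lambda_{t+1}B\bigr).
\]
Since $\Sigma=\mathrm{diag}(\sigma_1^2,\dots,\sigma_m^2)$ is diagonal, the $i$-th entry of $\mathrm{diag}(\Sigma W)$ equals $\sigma_i^2 W_{ii}$. The whole argument therefore reduces to showing $W_{ii}>0$ for every $i$. Then $\nabla_\ell J=0$ forces $\sigma_i^2=0$ for all $i$, and conversely $\Sigma=\mathbf 0$ makes every entry vanish. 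The converse is read as the limiting/boundary statement, since $\ell=\log\sigma$ is only finite for $\Sigma\succ0$. Equivalently, no interior stationary point in $\ell$ exists, and $\nabla_\ell J\to 0$ as $\Sigma\to\mathbf 0$.

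The key step is the positivity of $W$. First I show by backward induction on the recursion $\Lambda_T=\tfrac1\gamma Q_s$, $\Lambda_t=\tfrac1\gamma Q_s+K^\top Q_aK+\gamma F^\top\Lambda_{t+1}F$ that every $\Lambda_t\succeq0$. The base case holds because $Q_s\succeq0$ and $\gamma>0$. The inductive step holds because each summand is PSD: $K^\top Q_aK\succeq0$ and $F^\top\Lambda_{t+1}F\succeq0$ whenever $\Lambda_{t+1}\succeq0$. Hence $B^\top\Lambda_{t+1}B\succeq0$.

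Combining these, $W\succeq\sum_{t}\gamma^tQ_a=\bigl(\sum_{t=0}^{T-1}\gamma^t\bigr)Q_a\succ0$, because $Q_a\succ0$ and $\sum_t\gamma^t\ge1>0$. A positive definite matrix has strictly positive diagonal entries, since $W_{ii}=e_i^\top We_i>0$. This gives the claim.

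The only delicate point is interpreting ``$\Sigma=\mathbf 0$'' under the log-std parameterization. I would state explicitly that for any $\Sigma\succ0$ the gradient $\nabla_\ell J$ has strictly negative entries, so it is never zero. It vanishes only in the degenerate limit $\sigma_i\to0$, which is exactly the deterministic-policy conclusion drawn in the introduction. The positivity argument itself is routine once the PSD induction on $\Lambda_t$ is in place.
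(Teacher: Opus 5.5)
Your proposal is correct and follows the paper's proof essentially verbatim: write $\nabla_\ell J=0$ as $\Sigma_{ii}M_{ii}=0$ for each $i$, then use $Q_a\succ 0$ and $\Lambda_{t+1}\succeq 0$ to get $M\succ 0$, hence $M_{ii}>0$. Your backward induction proving $\Lambda_t\succeq 0$ fills in a step the paper only asserts. Your remark that $\Sigma=\mathbf 0$ lies outside the log-std parameterization, so the converse is really a limiting statement, is a clarification the paper omits.
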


Theorem~\ref{thm:lqr-variance-T-compose} provides an exact numerical procedure for computing the REINFORCE estimator's variance (and NSR) in any finite-horizon LQG system. The resulting expressions are considerably more involved than in the one-step case, making it difficult to directly extract clean scaling laws. We therefore analyze a simple upper bound to clarify how the NSR scales with the horizon, the policy and initial-state covariances, and closed-loop stability.

\begin{figure*}[h]
    \centering
    \includegraphics[width=0.86\textwidth]{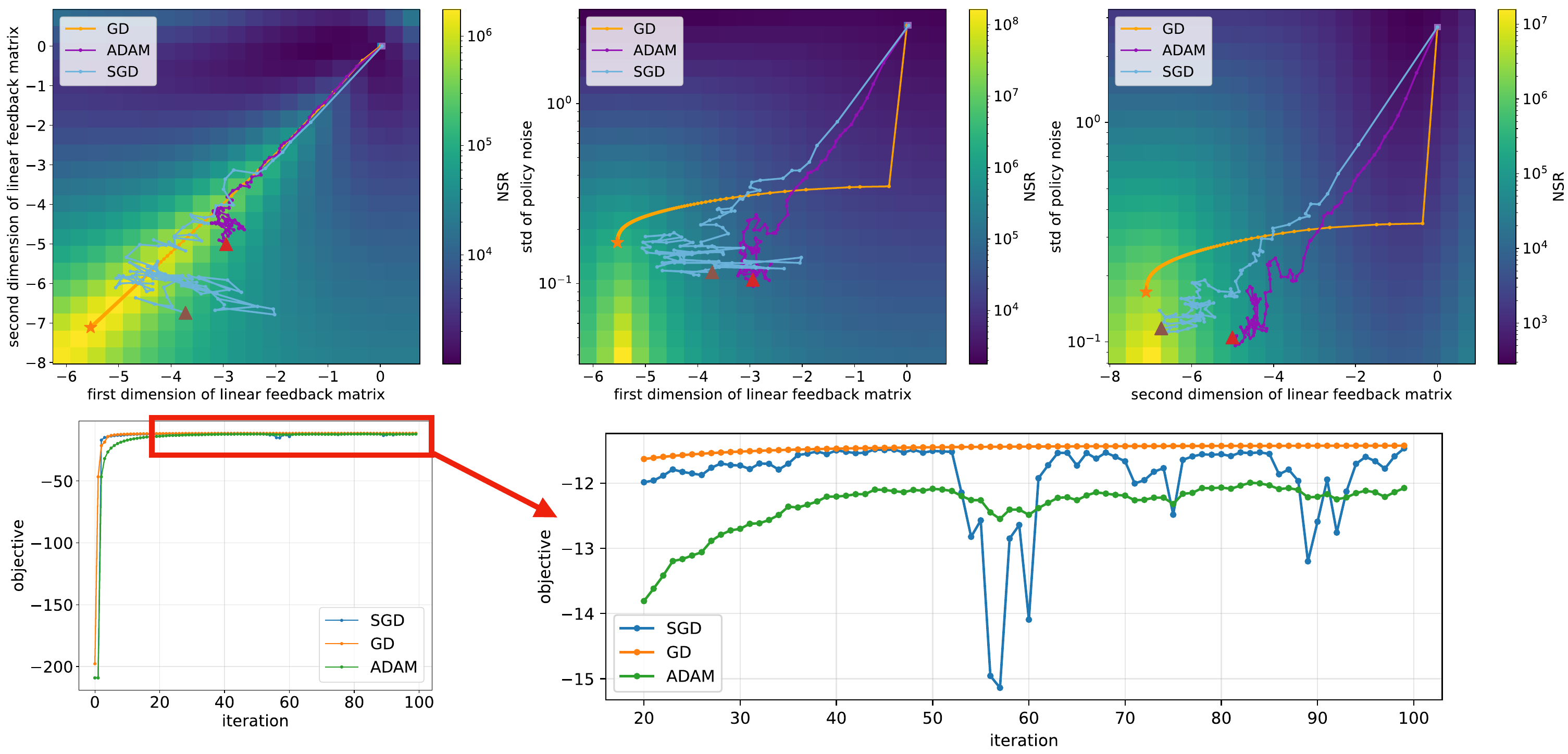}
    \caption{NSR and objective along optimization trajectories on a double-integrator system with $T=30$. {Top}: optimizer trajectories overlaid on the NSR landscape. {Bottom}: learning curves (objective vs.\ iteration). The three optimizers (GD, SGD, Adam) start from the same initial policy (square), move toward the optimal policy (star), and terminate at triangles. The NSR increases markedly as the policy approaches optimality, consistent with our theory. As the NSR grows, SGD and Adam exhibit oscillations in the learning curves.}
    \label{fig:lqr-variance-traj}
    \vspace{-4mm}
\end{figure*}

\begin{theorem}[Multi-step LQG variance bound]\label{thm:lqr-variance-T-bound}
Under the lifted multi-step LQG in Theorem~\ref{thm:lqr-lift}, we have
\begin{align}\label{eq:K-variance-split-clean}
  \VarF(\widehat G_K)
  &\;\le\;2\underbrace{\mathbb E\big[\|\overline\Sigma^{-1}\bar\varepsilon\|_2^2\,
                              \|\mathcal F_S s_0\|_2^2\,R(\tau)^{2}\big]}_{(\star)} \nonumber \\
  &\;+\;2\underbrace{\mathbb E\big[\|\overline\Sigma^{-1}\bar\varepsilon\|_2^2\,
                              \|\mathcal F_E\bar\varepsilon\|_2^2\,R(\tau)^{2}\big]}_{(\dagger)},
\end{align}
\vspace{-3mm}
\begin{equation}\label{eq:ell-second-moment-bound}
\VarF(\widehat G_\ell) 
\;\le\;
T\,\mathbb E\big[(2\|\overline\Sigma^{-1/2}\bar\varepsilon\|_2^4+2mT)\,R(\tau)^2\big],
\end{equation}
and the expectation on the right-hand side admits an explicit Gaussian-moment expansion given in \eqref{eq:star-final}, \eqref{eq:dagger-final} and \eqref{eq:Ell-final}.
\end{theorem}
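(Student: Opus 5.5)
We bound variance by the second moment throughout, $\VarF(\widehat G)\le \E\|\widehat G\|_{\mathrm F}^2$, and then control the second moment using the lifted representation of Theorem~\ref{thm:lqr-lift}.

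\textbf{Gain block.} By Lemma~\ref{lem:lqg-policy-score} the gain part of the estimator is
\[
\widehat G_K=\Big(\sum_t \Sigma^{-1}\varepsilon_t s_t^\top\Big)R(\tau).
\]
Write $\sum_t \Sigma^{-1}\varepsilon_t s_t^\top = W\,\mathrm{blk}(\bar s)$, where $W$ is the $m\times mT$ block row $[\Sigma^{-1}\varepsilon_0,\dots,\Sigma^{-1}\varepsilon_{T-1}]$ rearranged appropriately. More concretely, it equals $\sum_t u_t v_t^\top$ with $u_t=\Sigma^{-1}\varepsilon_t$ and $v_t=s_t$. Cauchy--Schwarz on the Frobenius norm of this sum gives
\[
\Big\|\sum_t u_tv_t^\top\Big\|_{\mathrm F}\le \sum_t\|u_t\|\|v_t\|\le \Big(\sum_t\|u_t\|^2\Big)^{1/2}\Big(\sum_t\|v_t\|^2\Big)^{1/2}=\|\overline\Sigma^{-1}\bar\varepsilon\|_2\,\|\bar s\|_2 .
\]
Substituting $\bar s=\mathcal F_S s_0+\mathcal F_E\bar\varepsilon$ and using $\|a+b\|^2\le 2\|a\|^2+2\|b\|^2$, then multiplying by $R(\tau)^2$ and taking expectations, yields \eqref{eq:K-variance-split-clean} with the terms $(\star)$ and $(\dagger)$.

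\textbf{Log-std block.} The score is $\sum_t(\Sigma^{-1}(\varepsilon_t\odot\varepsilon_t)-\mathbf 1)$. Apply $\|\sum_{t=1}^T x_t\|^2\le T\sum_t\|x_t\|^2$ together with $\|x-\mathbf 1\|^2\le 2\|x\|^2+2m$, where $x_t=\Sigma^{-1}(\varepsilon_t\odot\varepsilon_t)$ has entries $\xi_i^2$ with $\xi=\Sigma^{-1/2}\varepsilon_t$. Then $\|x_t\|^2=\sum_i\xi_i^4\le\|\xi\|^4$, and summing over $t$ gives $\sum_t\|\Sigma^{-1/2}\varepsilon_t\|^4\le\|\overline\Sigma^{-1/2}\bar\varepsilon\|^4$. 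This produces $T(2\|\overline\Sigma^{-1/2}\bar\varepsilon\|^4+2mT)$, and multiplying by $R^2$ gives \eqref{eq:ell-second-moment-bound}.

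\textbf{Explicit expansion.} Every remaining expectation is of a product of quadratic forms in the joint Gaussian vector $w=(s_0,\bar\varepsilon)\sim\mathcal N(0,\mathrm{blkdiag}(\Sigma_0,\overline\Sigma))$. The score factors are $\|\overline\Sigma^{-1}\bar\varepsilon\|^2=w^\top\mathrm{blkdiag}(0,\overline\Sigma^{-2})w$ and $\|\mathcal F_Ss_0\|^2=w^\top\mathrm{blkdiag}(\mathcal F_S^\top\mathcal F_S,0)w$. Similarly, $\|\overline\Sigma^{-1/2}\bar\varepsilon\|^4$ is the square of a quadratic form. From Theorem~\ref{thm:lqr-lift}, $R(\tau)=-w^\top\mathcal M w$, where
\[
\mathcal M=\begin{bmatrix}\overline M_{ss}&\overline M_{se}\\ \overline M_{se}^\top&\overline M_{ee}\end{bmatrix},
\]
so $R^2$ is a product of two copies of the same quadratic form. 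Each term is therefore $\mathrm{IS}_\Omega(A,B,\mathcal M,\mathcal M)$, a fourth-order Isserlis product. $\mathcal M$ is symmetric, so Lemma~\ref{lem:IS_shorthand}'s shorthand applies.

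\textbf{Main obstacle.} The difficulty is the bookkeeping of the $k=4$ Isserlis expansion, which Lemma~\ref{lem:IS_shorthand} does not list. I would derive it via the cumulant formula: the expectation is a sum over set partitions into cycles, weighted by $2^{|c|-1}(|c|-1)!\,\tr(\prod_{i\in c}\Omega A_i)$. Exploiting the block structure (so that cross traces with $\mathrm{blkdiag}(0,\cdot)$ vanish) then gives the stated formulas \eqref{eq:star-final}, \eqref{eq:dagger-final}, \eqref{eq:Ell-final}.
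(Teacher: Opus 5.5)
Your two inequalities are correct and follow the paper's argument. Bounding $\VarF$ by the second moment and using $\|\sum_t u_tv_t^\top\|_{\mathrm F}\le\sum_t\|u_t\|_2\|v_t\|_2$ with Cauchy--Schwarz gives the same $\|\overline\Sigma^{-1}\bar\varepsilon\|_2^2\|\bar s\|_2^2$ bound that the paper derives entrywise. Your log-std chain ($T\sum_t$, then $(a-1)^2\le 2a^2+2$, then $\sum_i\xi_i^4\le\|\xi\|^4$, then $\sum_t\|\xi_t\|^4\le\|\bar\xi\|^4$) is identical to the paper's.

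The gap is in the explicit expansion, and it sits exactly at the $k=4$ step you flagged as the main obstacle. Weighting a block $c$ by $2^{|c|-1}(|c|-1)!\,\tr(\prod_{i\in c}\Omega A_i)$ is valid only for $|c|\le 3$. There, all cyclic orders of symmetric matrices give the same trace. For $|c|=4$ the three undirected cyclic orders differ, and the connected contribution of $(A,B,\mathcal M,\mathcal M)$ is $32\tr(\Omega A\Omega B\Omega\mathcal M\Omega\mathcal M)+16\tr(\Omega A\Omega\mathcal M\Omega B\Omega\mathcal M)$.

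In $(\star)$ take $A=\mathrm{blkdiag}(0,\overline\Sigma^{-2})$ and $B=\mathrm{blkdiag}(S,0)$ with $S=\mathcal F_S^\top\mathcal F_S$. The first trace vanishes because $\Omega A\,\Omega B=0$. The second equals $\tr(\Sigma_0S\Sigma_0U)$ with $U=\overline M_{se}\overline M_{se}^\top$. This $16\tr(\Sigma_0S\Sigma_0U)$ is exactly part of the paper's $8\,\mathrm{IS}_{\Sigma_0}(S,U)$ in \eqref{eq:star-final}. Your rule returns $0$ for it, or $48\tr(\Sigma_0S\Sigma_0U)$ if you use the other ordering.

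Even with the correct Wick formula, working with the joint $\Omega=\mathrm{blkdiag}(\Sigma_0,\overline\Sigma)$ and the coupled $\mathcal M$ yields sums of mixed block traces. Saying that ``block structure then gives the stated formulas'' skips all the real work. You have not shown how those traces regroup into the factored products $\mathrm{IS}_{\Sigma_0}(\cdot)\,\mathrm{IS}_{\overline\Sigma}(\cdot)$ and the quadratic forms in $s_0$ (the paper's $U,V,W,Z$) that appear in \eqref{eq:star-final}, \eqref{eq:dagger-final} and \eqref{eq:Ell-final}.

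The missing idea is to decouple $s_0$ from $\bar\varepsilon$ before applying Isserlis, as the paper does:
\begin{enumerate}
\item Expand $R^2=(x+2y+z)^2$.
\item Drop $xy$ and $yz$: they are odd in $\bar\varepsilon$, while every score factor is even.
\item Factor the $x^2$, $xz$ and $z^2$ terms by independence into $\mathrm{IS}_{\Sigma_0}(\cdot)$ times $\mathrm{IS}_{\overline\Sigma}(\cdot)$.
\item For the $y^2$ term, condition on $s_0$ and write $y^2=\bar\varepsilon^\top X(s_0)\bar\varepsilon$ with the rank-one $X(s_0)=\overline M_{se}^\top s_0s_0^\top\overline M_{se}$.
\item Apply only the $k=2,3$ cases of Lemma~\ref{lem:IS_shorthand}, and turn each trace against $X(s_0)$ into a quadratic form in $s_0$.
\end{enumerate}
The genuinely fourth-order pure-noise moments, such as $\mathrm{IS}_{\overline\Sigma}(\overline\Sigma^{-2},\mathcal F_E^\top\mathcal F_E,\overline M_{ee},\overline M_{ee})$, are left in shorthand in the stated formulas. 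So no $k=4$ expansion of a joint moment is ever required.
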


\textbf{Analysis of NSR.} Same as the analysis in the one-step case, $\overline M_{ee}, \overline M_{ss} \succeq 0, \overline M_{se}$ are nonzero matrices, so that every term in $(\star)$, $(\dagger)$ and $\E\!\left[\|\widehat G_\ell\|_2^2\right]$ are nonzero. With $\Sigma = \sigma^2 I$ and $\Sigma_0 = \sigma_0^2 I$, we can write these terms as
\begin{align*}
\vspace{-2mm}
&(\star) = \Theta(\frac{\sigma_0^6}{\sigma^2} + \sigma_0^4 + \sigma_0^2 \sigma^2), \quad (\dagger) = \Theta(\sigma^4 + \sigma_0^4 + \sigma_0^2 \sigma^2),\\
& \E\!\left[\|\widehat G_\ell\|_2^2\right] = \Theta(\sigma^4 + \sigma_0^4 + \sigma_0^2 \sigma^2).
\vspace{-2mm}
\end{align*}
From Theorem~\ref{thm:lqr-variance-T-compose} we know $\|\mathbb{E}[\widehat G_K]\|_{\mathrm{F}}^2, \|\mathbb{E}[\widehat G_\ell]\|_2^2= \Theta((\sigma^2 + \sigma_0^2)^2)$ if they are nonzero, thus
$$
\frac{\VarF(\widehat G_K)\!+\!\VarF(\widehat G_\ell)}{\|\mathbb{E}[\widehat G_K]\|_{\mathrm{F}}^2 + \|\mathbb{E}[\widehat G_\ell]\|_{\mathrm{F}}^2}\!=O\left(\frac{\frac{\sigma_0^6}{\sigma^2} +\!\sigma_0^4 +\!\sigma_0^2 \sigma^2 +\!\sigma^4}{(\sigma_0^2 + \sigma^2)^2}\right).
$$
Again, define $\alpha:=\sigma_0^2/\sigma^2$, we have
\[
\frac{\frac{\sigma_0^6}{\sigma^2}+\sigma_0^4+\sigma_0^2 \sigma^2+\sigma^4}{(\sigma_0^2+\sigma^2)^2}
=\frac{\sigma_0^4+\sigma^4}{\sigma^2(\sigma_0^2+\sigma^2)}
=\frac{\alpha^2+1}{\alpha+1}.
\]
Therefore, if $\sigma_0 \gg \sigma$, we have \(\alpha \to \infty\) and the ratio scales as \(O(\tr(\Sigma_0)\tr(\Sigma^{-1}))\), same as in the one-step case but stated here as an upper bound.

Theorem \ref{thm:lqr-variance-T-bound} further indicates the variance bound depends on the spectral norms of the lifted state maps \(\mathcal F_S\) and \(\mathcal F_E\), which exponentially grow with the horizon \(T\) if the closed-loop system is unstable. 

\begin{theorem}[Spectral norm of lifted state map]
  \label{thm:lqr-lifted-state-maps-bound}
  The lifted state map $\mathcal F_S$ satisfies
  \begin{align}
    \vspace{-2mm}
    \max_{0\le t\le T-1}\|F^t\|_2^2
    \;\le\;
    \|\mathcal F_S\|_2^2
    \;\le\;
    \sum_{t=0}^{T-1}\|F^t\|_2^2. \label{eq:FS-sandwich}
    \vspace{-2mm}
  \end{align}
  Thus, $\|\mathcal F_S\|_2^2$ is $O(1)$ if $\rho(F)<1$, and $O\big(\rho(F)^{2T}\big)$ if $\rho(F)>1$ (with \(\rho(\cdot)\) denoting the spectral radius).
\end{theorem}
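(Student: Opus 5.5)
Identify $\mathcal F_S$ explicitly, then use the standard sandwich for a block-column matrix. Since $\bar s=\mathcal F_S s_0+\mathcal F_E\bar\varepsilon$ and the closed loop is $s_{t+1}=Fs_t+B\varepsilon_t$, unrolling gives $s_t=F^t s_0+\sum_{k<t}F^{t-1-k}B\varepsilon_k$. The dependence on $s_0$ is therefore
\[
\mathcal F_S=\bmat{c} I_n\\ F\\ \vdots\\ F^{T-1}\emat ,
\]
and this is the matrix the proof works with.

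For the lower bound, use $\|\mathcal F_S\|_2^2=\max_{\|v\|=1}\sum_{t=0}^{T-1}\|F^t v\|_2^2$. Fix $t^\star$ attaining $\max_t\|F^t\|_2$ and take $v$ a top right singular vector of $F^{t^\star}$. Every summand is nonnegative, so the sum is at least $\|F^{t^\star}v\|^2=\|F^{t^\star}\|_2^2$. For the upper bound, bound each summand termwise: $\|F^t v\|^2\le\|F^t\|_2^2$ for unit $v$, so the sum is at most $\sum_t\|F^t\|_2^2$. Equivalently, $\mathcal F_S^\top\mathcal F_S=\sum_t (F^t)^\top F^t$, and one can apply the triangle inequality for the spectral norm.

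The asymptotic statements follow from Gelfand's formula, or concretely a Jordan-form bound $\|F^t\|_2\le C\,t^{n-1}\rho(F)^t$. If $\rho(F)<1$, pick $r\in(\rho(F),1)$; then $\|F^t\|\le C_r r^t$, so the sum is at most $C_r^2/(1-r^2)$, which is $O(1)$ uniformly in $T$. If $\rho(F)>1$, the same bound gives $\sum_{t<T}\|F^t\|^2\le C^2T^{2n-2}\sum_t\rho^{2t}=O(T^{2n-2}\rho(F)^{2T})$. The matching lower bound from the sandwich is $\|F^{T-1}\|^2\ge\rho(F)^{2(T-1)}$, since $\|F^t\|\ge\rho(F^t)=\rho(F)^t$, which confirms that exponential growth actually occurs.

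The main obstacle is the polynomial factor when $F$ is defective with a repeated eigenvalue of modulus $\rho(F)$. In that case the clean $O(\rho(F)^{2T})$ is only literally true when $F$ is diagonalizable, or more generally when the Jordan blocks for the top eigenvalues are trivial. Otherwise the statement should be read as $O((\rho(F)+\epsilon)^{2T})$ for every $\epsilon>0$, or as $O(\mathrm{poly}(T)\rho(F)^{2T})$. I would state this caveat and prove the diagonalizable case with constant $\kappa(V)^2$ from $F=V\Lambda V^{-1}$.
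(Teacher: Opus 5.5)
Your sandwich argument matches the paper's. The paper also writes $\mathcal F_S^\top\mathcal F_S=\sum_{t=0}^{T-1}(F^t)^\top F^t$. It gets the lower bound from the Loewner order $\mathcal F_S^\top\mathcal F_S\succeq (F^t)^\top F^t$, which is equivalent to your top-singular-vector choice, and the upper bound from the same termwise Rayleigh-quotient estimate.

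The paper stops there and gives no argument for the ``Thus'' clause, so your Jordan-form treatment supplies what is missing. (Write the bound as $C(1+t)^{n-1}\rho(F)^t$ so the $t=0$ term is covered.)

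Your caveat is correct, not overly cautious. Take $F=\begin{bmatrix}2&1\\0&2\end{bmatrix}$. Then $(F^t)_{12}=t\,2^{t-1}$, so
\[
\|\mathcal F_S\|_2^2\ge\|F^{T-1}\|_2^2\ge (T-1)^2\,4^{T-2},
\]
which is not $O(\rho(F)^{2T})=O(4^T)$. The $\rho(F)>1$ claim therefore needs one of two fixes, as you propose: semisimplicity of the eigenvalues of modulus $\rho(F)$, or an extra $\mathrm{poly}(T)$ factor. The $\rho(F)<1$ claim holds as stated by your $r\in(\rho(F),1)$ argument.
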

Figure~\ref{fig:lqr-variance-exponential-increase} illustrates how the \emph{true} variance $\VarF(\widehat G)$ (computed exactly from Theorem~\ref{thm:lqr-variance-T-compose}) scales with the horizon $T$ across four systems: it remains bounded when $\rho(F)<1$, grows roughly polynomially when $\rho(F)=1$, and grows exponentially when $\rho(F)>1$. This verifies the prediction of Theorems~\ref{thm:lqr-variance-T-bound} and \ref{thm:lqr-lifted-state-maps-bound}.
Thus, in long-horizon tasks, closed-loop instability can dramatically worsen the NSR.

\vspace{-3mm}
\begin{figure}[h]
  \centering
  \includegraphics[width=0.82\linewidth]{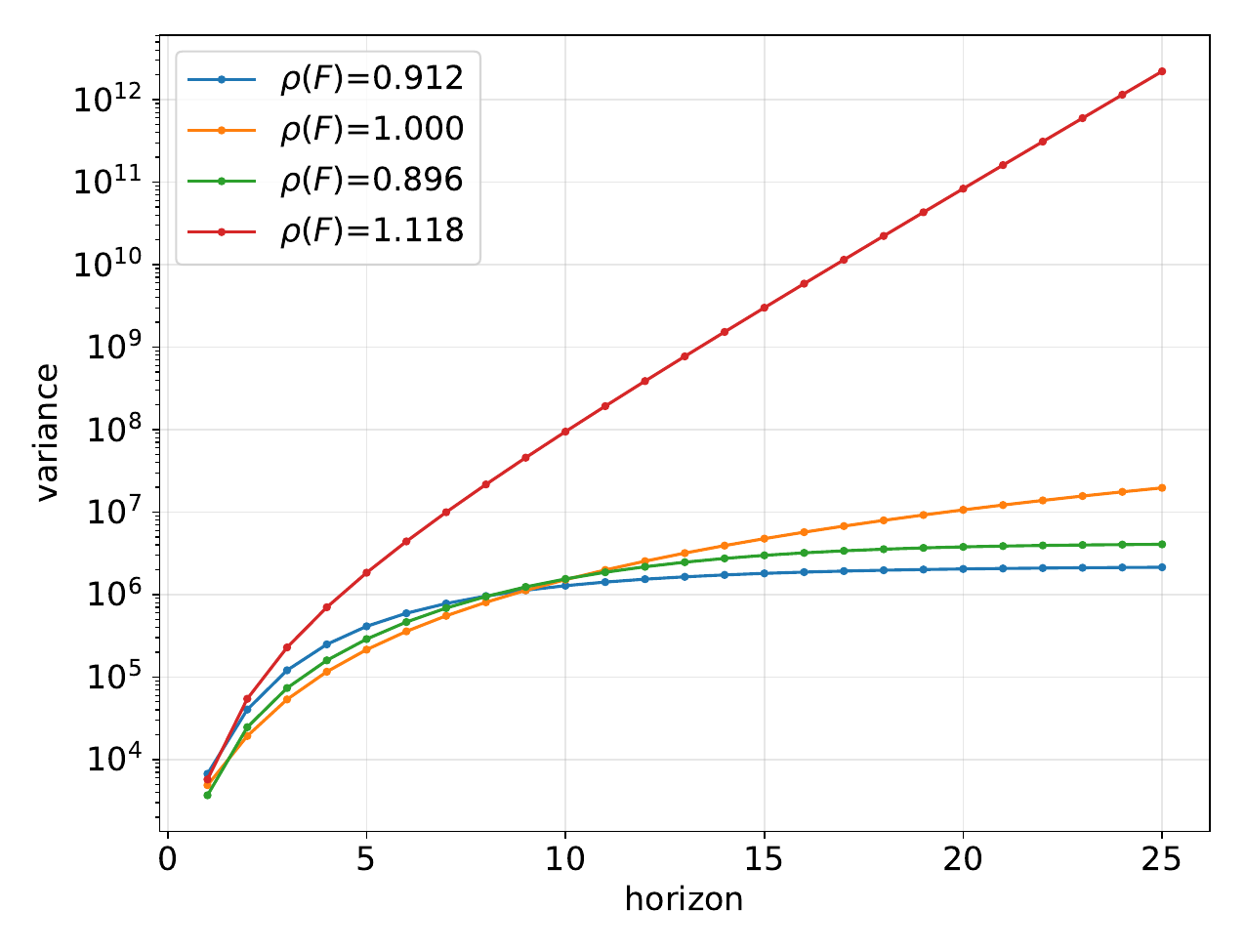}
  \caption{Variance growth as a function of the horizon \(T\) for four 2D systems (1D control) with varying \(\rho(F)\). We fix the initial-state covariance at \(\Sigma_0 = I_2\) and the policy covariance at \(\Sigma = 0.1 I_1\).}
  \label{fig:lqr-variance-exponential-increase}
  \vspace{-3mm}
\end{figure}

\subsection{Experiments}\label{sec:lqg-experiment}
\vspace{-1mm}

Figure~\ref{fig:lqr-variance-traj} visualizes the NSR landscape for a double-integrator system and overlays the optimization trajectories of three methods: GD, SGD, and Adam~\cite{kingma14arxiv-adam}. The policy has three parameters: a 2D gain matrix $K$ and a 1D log-std $\ell$. We observe that the NSR increases as the iterates approach the optimum. GD converges smoothly, whereas SGD and Adam exhibit oscillations once the NSR becomes large. This behavior is consistent with Corollary~\ref{cor:lqg_stationary}, which implies that the optimal policy is deterministic (\ie $\Sigma \to \mathbf 0$), so the NSR diverges near optimality; empirically, SGD and Adam are stable early on but oscillate near the optimum in both parameter space and objective value.

\begin{figure}
  \centering
  \includegraphics[width=0.36\textwidth]{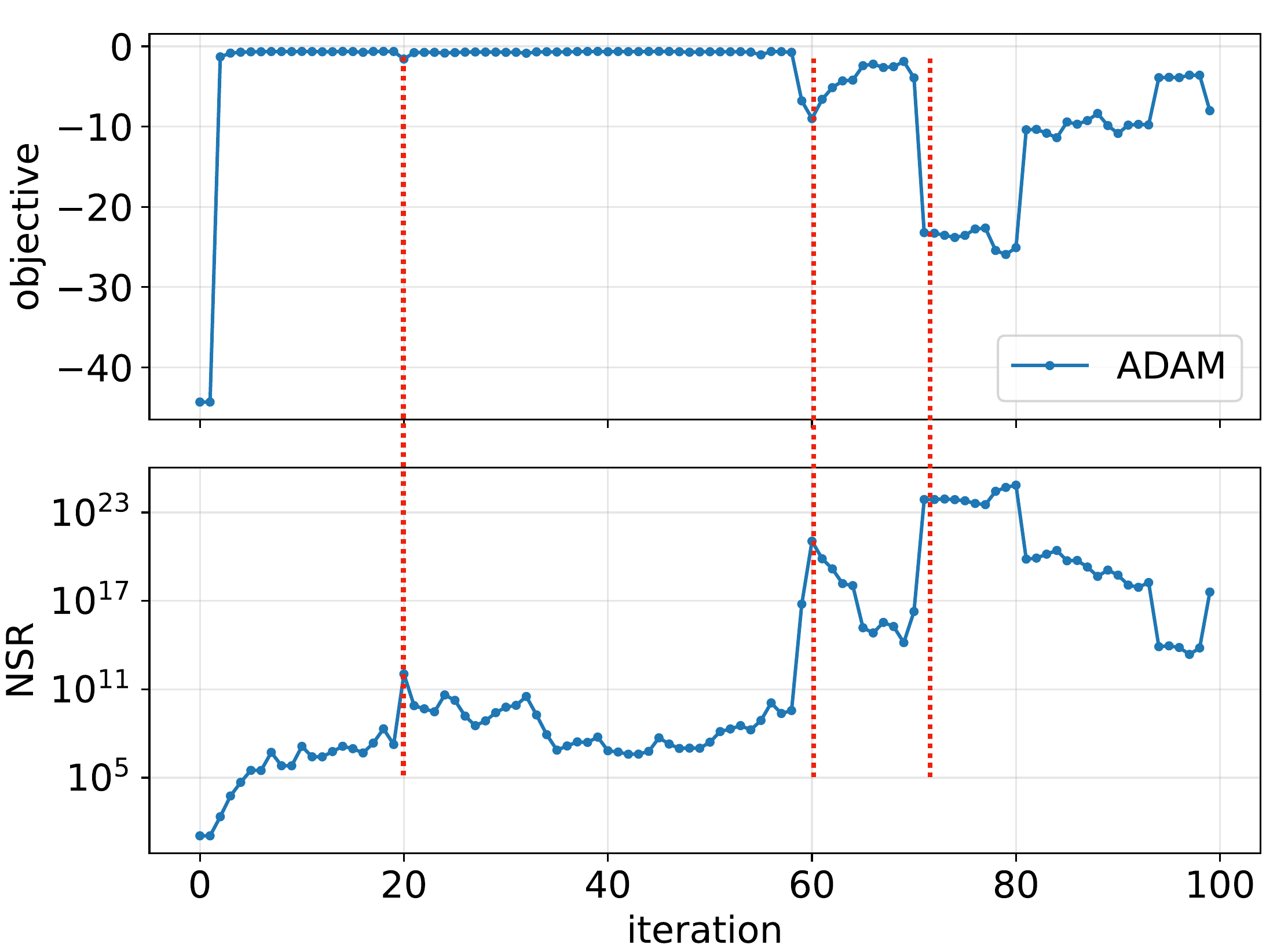}%
  \caption{NSR (bottom) and objective (top) during training on the quadratic system. As the policy oscillates near the optimum, a single inaccurate gradient step triggers policy collapse.}
  \label{fig:policy_collapse}
  \vspace{-4mm}
\end{figure}



\section{Polynomial Systems, Polynomial Feedback}
\label{sec:poly}

\begin{figure*}
    \centering
    \includegraphics[width=0.9\textwidth]{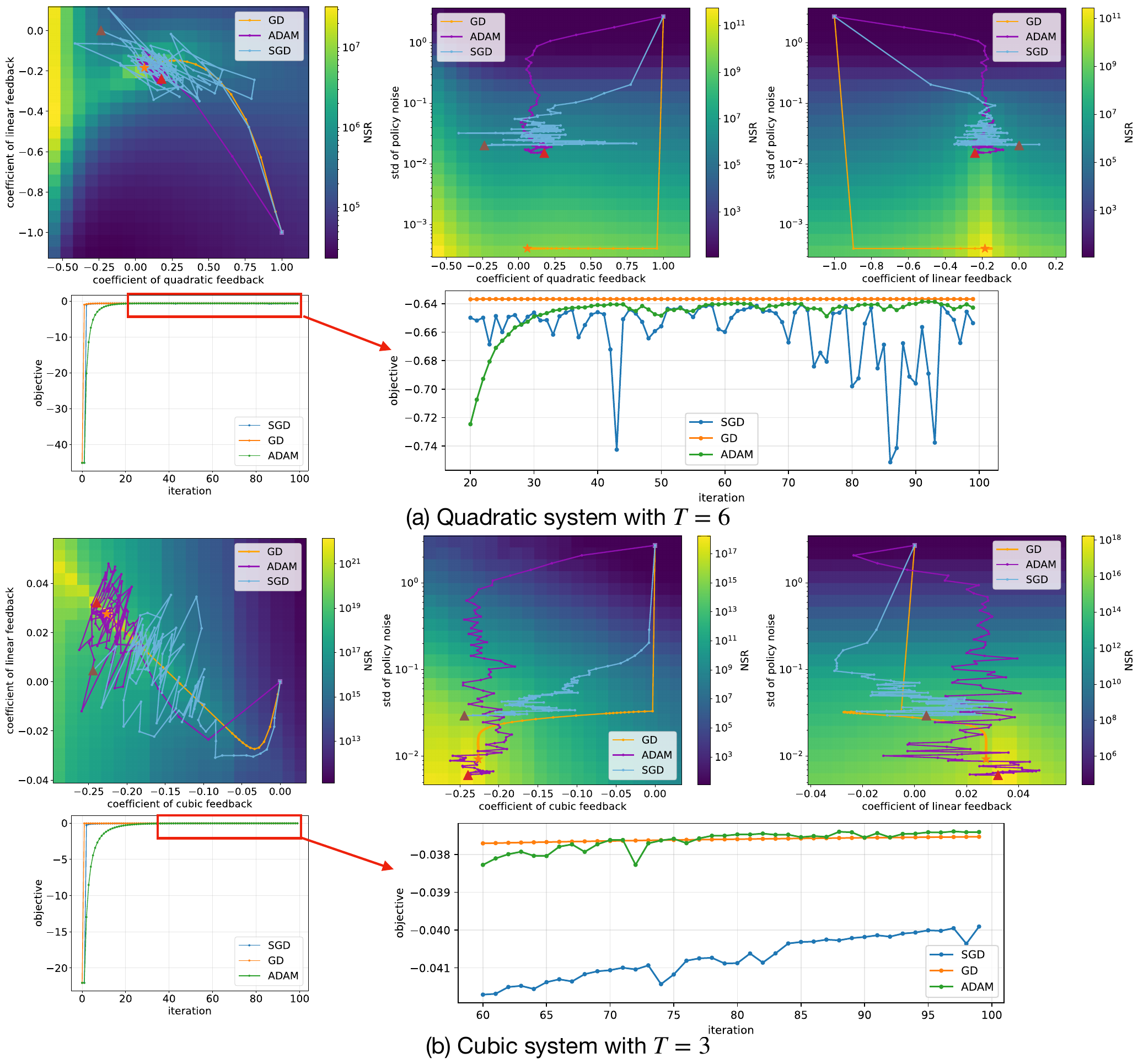}
    \caption{NSR and objective along optimization trajectories on polynomial systems. (a): quadratic system; (b): cubic system. {Top}: optimizer trajectories and NSR landscape; {Bottom}: learning curves. The NSR increases when approaching the optimal policy, as in linear systems. GD consistently approaches the optimum while SGD and Adam may get stuck in high-NSR regions.}
    \label{fig:poly-variance-traj}
    \vspace{-3mm}
\end{figure*}

\begin{figure*}
    \centering
    \includegraphics[width=0.9\textwidth]{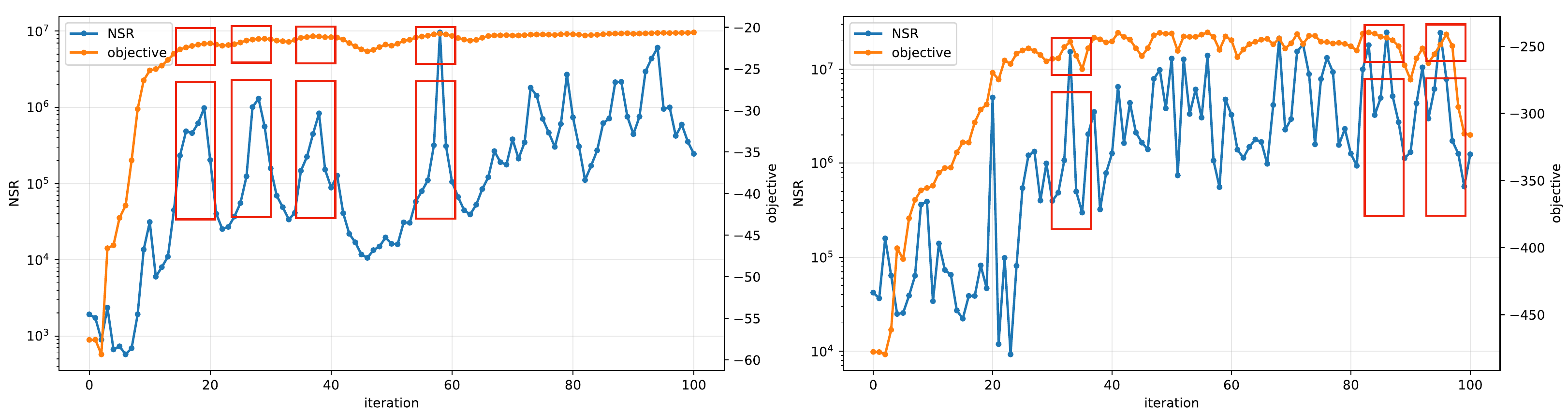}
    \caption{Trajectory of NSR and objective for LQR with MLP policy (left) and Pendulum with MLP policy (right). Both the NSR and the objective are calculated through Monte Carlo evaluation. The NSR increases when approaching the optimal policy, and the red square shows the synchronized fluctuations in objective and NSR.}
    \label{fig:nonlinear-traj}
    \vspace{-2mm}
\end{figure*}

We then consider polynomial dynamics with polynomial reward, and a Gaussian policy with polynomial mean
\begin{align}\label{eq:poly-dynamics-reward}
s_{t+1}=P(s_t,a_t),\quad \pi(a\mid s)=\mathcal N \big(\Phi(s)^\top\theta,\Sigma\big),
\end{align}
where $\Phi(s)\in\mathbb R^{d\times m}$ is a matrix of polynomial features. Equivalently,
$a_t=\Phi(s_t)^\top\theta+\varepsilon_t$ with $\varepsilon_t\sim\mathcal N(0,\Sigma)$. The initial state satisfies $s_0\sim\mathcal N(0,\Sigma_0)$.

\begin{proposition}[Polynomial form of REINFORCE and exact Gaussian-moment evaluation]\label{prop:poly-isserlis}

The following statements hold for the polynomial system~\eqref{eq:poly-dynamics-reward}:

\textbf{(i) Polynomial estimator.}
For each $t\le T$, the random variables $s_t$ and $a_t$ are vector-valued polynomials in
$\xi:=(s_0,\varepsilon_0,\ldots,\varepsilon_{T-1})$. Consequently, $\widehat G_\theta$ and $\widehat G_\ell$ are polynomials in $\xi$.

\textbf{(ii) Exact moments.}
$\E[\widehat G_\theta], \E[\widehat G_\ell]$, $\E[\|\widehat G_\theta\|_{\mathrm{F}}^2]$, $\E[\|\widehat G_\ell\|_{\mathrm{F}}^2]$
are exactly computable from $\mathrm{Cov}(\xi)$ via Isserlis theorem.
\end{proposition}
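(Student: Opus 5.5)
Part (i) goes by induction on $t$, and part (ii) by linearity of expectation plus Isserlis' theorem applied monomial by monomial.

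\textbf{Part (i).} The base case is immediate: $s_0$ is a coordinate of $\xi$, and $a_0=\Phi(s_0)^\top\theta+\varepsilon_0$ is polynomial in $(s_0,\varepsilon_0)$ because $\Phi$ has polynomial entries. For the inductive step, suppose $s_t$ and $a_t$ are vector-valued polynomials in $\xi$. Then $s_{t+1}=P(s_t,a_t)$ is polynomial in $\xi$, since polynomials are closed under composition. Likewise $a_{t+1}=\Phi(s_{t+1})^\top\theta+\varepsilon_{t+1}$ is polynomial in $\xi$, with coefficients depending on $\theta$.

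Next come the score functions. Since $a_t-\Phi(s_t)^\top\theta=\varepsilon_t$, we get
\[
\nabla_\theta\log\pi(a_t\mid s_t)=\Phi(s_t)\Sigma^{-1}\varepsilon_t,
\]
which is polynomial in $\xi$. By the same computation as in Lemma~\ref{lem:lqg-policy-score}, $\nabla_\ell\log\pi=\Sigma^{-1}(\varepsilon_t\odot\varepsilon_t)-\mathbf 1$, also polynomial. The return $R(\tau)=\sum_t\gamma^t r(s_{t+1},a_t)$ is polynomial because $r$ is. Hence $\widehat G_\theta$ and $\widehat G_\ell$ are finite sums of products of polynomials, so they are polynomials in $\xi$. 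The degree is finite but may grow geometrically in $T$ when $\deg P>1$.

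\textbf{Part (ii).} The key observation is that $\xi$ is a jointly Gaussian, zero-mean vector. The blocks $s_0,\varepsilon_0,\dots,\varepsilon_{T-1}$ are independent, so
\[
\mathrm{Cov}(\xi)=\mathrm{blkdiag}(\Sigma_0,\Sigma,\dots,\Sigma).
\]
Each entry of $\widehat G_\theta$ and $\widehat G_\ell$ is a polynomial in $\xi$, and so is each entry of $\widehat G_\theta\odot\widehat G_\theta$. Summing entries, $\|\widehat G_\theta\|_{\mathrm F}^2$ and $\|\widehat G_\ell\|_{\mathrm F}^2$ are polynomials in $\xi$ as well. By linearity of expectation, each required quantity is a finite linear combination of monomial moments $\E[\xi_{i_1}\cdots\xi_{i_k}]$.

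Isserlis' theorem evaluates these exactly. Odd-order moments vanish, and even-order moments equal $\sum_{\text{pairings}}\prod \mathrm{Cov}(\xi)_{i_a i_b}$. This gives an exact finite procedure depending only on $\mathrm{Cov}(\xi)$ and the polynomial coefficients. The same reasoning yields $\VarF$ and hence the NSR. Lemma~\ref{lem:IS_shorthand} is the special case of quadratic forms.

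\textbf{Main obstacle.} The only delicate point is whether "exactly computable" holds when $\Sigma_0$ is singular and whether everything stays finite. Isserlis' theorem holds for any PSD covariance, including the degenerate case, and polynomials of Gaussians have all moments finite. So no integrability issues arise.

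The real difficulty is computational rather than logical. The number of pairings grows like $(k-1)!!$, and the degree $k$ grows with $T$ and $\deg P$. I would note that the expansion is finite and exact, though expensive. A practical remedy is to exploit the block independence, which factorizes each moment into a product of per-block moments, and to precompute the monomial moments of each block.
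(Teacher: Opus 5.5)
Your proposal is correct and follows essentially the same route as the paper. Part (i) is induction on $t$ using closure of polynomials under composition, together with the explicit scores $\Phi(s_t)\Sigma^{-1}\varepsilon_t$ and $\Sigma^{-1}(\varepsilon_t\odot\varepsilon_t)-\mathbf 1$; part (ii) is monomial expansion, linearity of expectation, and Isserlis' theorem under the block-diagonal $\mathrm{Cov}(\xi)$ (your extra remarks on degenerate $\Sigma_0$ and finiteness of moments are valid but go beyond what the paper needs).
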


Proposition~\ref{prop:poly-isserlis} shows the NSR can in principle be computed symbolically for polynomial systems. 

However, the worst-case complexity grows rapidly with the dimension, degree, and horizon. Let $P$ have degree $d_P$, $\Phi$ have degree $d_\Phi$, and $r$ have degree $d_r$. Then $\deg s_t\approx\!d_P^t d_\Phi^t$ and $\deg a_t\approx\!d_P^t d_\Phi^{t+1}$. Thus, the REINFORCE estimator involves Gaussian moments up to order $d_P^{2T} d_\Phi^{2T+1} d_r$. In the worst case, the number of monomial terms can be as large as $\binom{n + mT + d_P^{2T} d_\Phi^{2T+1} d_r}{d_P^{2T} d_\Phi^{2T+1} d_r}$, leading to exponential dependence in $T$ and a combinatorial blow-up in $n$ and $m$.

\textbf{Experiments}. We implement code to compute the objective, gradient, and variance for two polynomial systems:

\emph{Quadratic system:} $s_{t+1}=s_t+ h(-s_t^2 + a_t)$ with reward $r(s,a)=-s^2-a^2$; where $h=0.05$ is the step size. The policy is parameterized as $a_t=\theta_1 s_t + \theta_2 s_t^2 + \varepsilon_t$. 

\emph{Cubic system:} $s_{t+1}=s_t + h(-s_t^3 + a_t)$ with reward $r(s,a)=-1.25s^6 - a^2$; where $h=0.05$. The policy is parameterized as $a_t=\theta_1 s_t + \theta_2 s_t^3 + \varepsilon_t$. Note the optimal policy can be proved to be an odd function because of the symmetry of dynamics and initial state distribution.

The cubic example is motivated by an infinite-horizon continuous-time analogue where the optimal feedback is polynomial.
For $\dot s = -s^3 + a$ and the selected cost ($-r(s,a)$), the Hamilton-Jacobi-Bellman equation admits a polynomial value function $V(s)= 0.25 s^4$ and the stationary optimal policy $a^\star(s)= -0.5\,V'(s)= -0.5 s^3$.

Due to limited computational resources, we set the horizon $T=6$ for the quadratic system and $T=3$ for the cubic system. Initial state covariance is set as $\Sigma_0=0.1$.

We plot the NSR heatmap for both systems in Figure~\ref{fig:poly-variance-traj}. As in the LQG case, we examine the trajectories of three optimizers (GD, SGD, Adam) in a 3D policy-parameter space (two polynomial coefficients and one log-std). The trajectories show the same trend: the NSR increases as the policy approaches optimality. This provides evidence that the non-uniform variance / NSR phenomenon exists across different systems.

With smaller batch sizes and larger learning rates, we observe policy collapse: even near the optimum, a single noisy gradient step can drive the log-standard deviation downward, causing the NSR to blow up and the policy to collapse (Figure~\ref{fig:policy_collapse}). We further verify that this collapse is caused by statistical error by comparing against two baselines: a larger batch size and the exact gradient. The larger-batch setting still collapses, but only after a longer time, whereas the exact-gradient setting does not collapse at all (Figure~\ref{fig:policy_collapse_compare}).

\begin{figure*}
    \centering
    \includegraphics[width=0.88\textwidth]{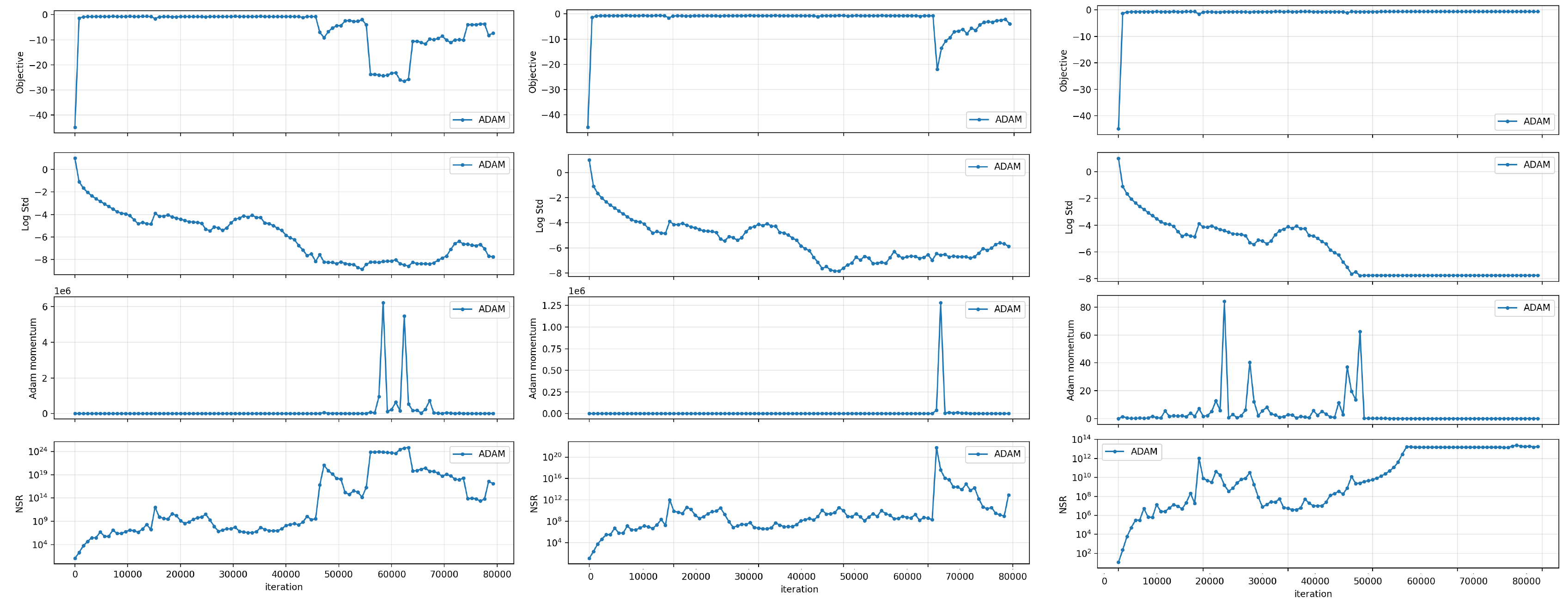}
    \vspace{-2mm}
    \caption{Detailed ablation study of policy collapse on the quadratic system. Left: original policy-collapse curves. Middle: with a larger batch size, the policy still collapses but takes longer. Right: with the exact gradient, the policy does not collapse. From top to bottom, the rows show the objective, the log-standard deviation of the policy covariance, Adam's momentum, and the NSR. These results confirm that the collapse is caused by statistical error.}
    \label{fig:policy_collapse_compare}
    \vspace{-5mm}
\end{figure*}

\vspace{-2mm}
\section{Nonlinear Systems, Generic Feedback}
\label{sec:nonlinear_new}

We now consider generic nonlinear dynamics and policies:
\begin{align}\label{eq:nonlinear-dynamics-reward}
s_{t+1}\!=f(s_t,a_t),~~a_t\!=\mu_\theta(s_t) + \varepsilon_t,~~
  \varepsilon_t \sim \mathcal N(0,\Sigma).\!\!
\end{align}
In this nonlinear setting, $\widehat G=g(\xi)$ is a nonlinear function of Gaussian noise, so NSR depends on expectations
$\E[g(\xi)]$ and $\E[g(\xi)^2]$, which are generally intractable Gaussian integrals. This is the same issue that arises in nonlinear estimation where even computing moments of a nonlinear transformation of a Gaussian is difficult~\cite{julier04proceedings-nonlinearestimation}. Therefore, we focus on providing upper bounds.

\begin{theorem}[Upper bounds for generic nonlinear systems]
\label{thm:nonlinear-bound}
The REINFORCE estimators for \eqref{eq:nonlinear-dynamics-reward} are
\begin{align}
\widehat G_\theta(\tau)
&:= R(\tau)\sum_{t=0}^{T-1} J_\theta(s_t)^\top \Sigma^{-1}\varepsilon_t,
\label{eq:gtheta_def}\\
\widehat G_\ell(\tau)
&:= R(\tau)\sum_{t=0}^{T-1}\Big( \Sigma^{-1}(\varepsilon_t\odot\varepsilon_t) - \mathbf 1\Big),
\label{eq:gell_def}
\end{align}
Assume the following conditional moments are finite:
\[
\mathbb E\!\left[R(\tau)^4\,\middle|\, s_0\right]<\infty,
~~
\mathbb E\!\left[\|J_\theta(s_t)\|_{\mathrm{F}}^4\,\middle|\, s_0\right]<\infty\quad \forall t, s_0.
\]
Then the following bounds hold:

\textbf{(i) Mean-parameter gradient.}
\begin{align}
\label{eq:E2_theta}
&\mathbb E\!\left[\|\widehat G_\theta(\tau)\|_2^2 \,\middle|\, s_0\right]
\le
T\,\|\Sigma^{-1}\|_2^2\;
\sqrt{\mathbb E\!\left[R(\tau)^4\,\middle|\, s_0\right]}\;\notag \\
&\sqrt{(\operatorname{tr}\Sigma)^2 + 2\,\operatorname{tr}(\Sigma^2)}\;\sum_{t=0}^{T-1} \sqrt{\mathbb E\!\left[\|J_\theta(s_t)\|_{\mathrm{F}}^4\,\middle|\, s_0\right]}.
\end{align}
\textbf{(ii) Log-std gradient.}
\begin{align}
\label{eq:E2_ell}
\mathbb E\!\left[\|\widehat G_\ell(\tau)\|_2^2 \,\middle|\, s_0\right]
\le
C\sqrt{\mathbb E\!\left[R(\tau)^4\,\middle|\, s_0\right]}.
\end{align}
where $C$ is a constant depends on the system.
\vspace{-2mm}
\end{theorem}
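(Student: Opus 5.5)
First I will derive the estimator formulas \eqref{eq:gtheta_def}--\eqref{eq:gell_def}. This is a direct generalization of Lemma~\ref{lem:lqg-policy-score}. For $\pi_\theta(a\mid s)=\mathcal N(\mu_\theta(s),\Sigma)$ we have
\[
\nabla_\theta\log\pi_\theta(a_t\mid s_t)=J_\theta(s_t)^\top\Sigma^{-1}(a_t-\mu_\theta(s_t))=J_\theta(s_t)^\top\Sigma^{-1}\varepsilon_t ,
\]
where $J_\theta(s):=\partial\mu_\theta(s)/\partial\theta$. The $\ell$-score is exactly as in Lemma~\ref{lem:lqg-policy-score}. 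Substituting both into \eqref{eq:policy_gradient_theorem_1} gives the two formulas.

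\textbf{Part (i).} Write $\widehat G_\theta=R\sum_t X_t$ with $X_t:=J_\theta(s_t)^\top\Sigma^{-1}\varepsilon_t$. The proof then has three steps.
\begin{enumerate}
\item By Cauchy--Schwarz on the sum, $\|\sum_t X_t\|_2^2\le T\sum_t\|X_t\|_2^2$.
\item Conditionally on $s_0$, apply Cauchy--Schwarz in expectation to each term:
\[
\mathbb E[R^2\|X_t\|_2^2\mid s_0]\le\sqrt{\mathbb E[R^4\mid s_0]}\,\sqrt{\mathbb E[\|X_t\|_2^4\mid s_0]}.
\]
\item Bound the operator norms: $\|X_t\|_2\le\|J_\theta(s_t)\|_{\mathrm F}\,\|\Sigma^{-1}\|_2\,\|\varepsilon_t\|_2$, so $\|X_t\|_2^4\le\|\Sigma^{-1}\|_2^4\,\|J_\theta(s_t)\|_{\mathrm F}^4\,\|\varepsilon_t\|_2^4$.
\end{enumerate}

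The key structural point is that $s_t$ depends only on $(s_0,\varepsilon_0,\dots,\varepsilon_{t-1})$, so $\varepsilon_t$ is independent of $s_t$ given $s_0$. Hence
\[
\mathbb E[\|J_\theta(s_t)\|_{\mathrm F}^4\|\varepsilon_t\|^4\mid s_0]=\mathbb E[\|J_\theta(s_t)\|_{\mathrm F}^4\mid s_0]\,\mathbb E\|\varepsilon_t\|^4 .
\]
Lemma~\ref{lem:IS_shorthand} with $A=B=I$ gives $\mathbb E\|\varepsilon_t\|^4=\mathrm{IS}_\Sigma(I,I)=(\operatorname{tr}\Sigma)^2+2\operatorname{tr}(\Sigma^2)$. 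Taking square roots produces the factor $\|\Sigma^{-1}\|_2^2\sqrt{(\operatorname{tr}\Sigma)^2+2\operatorname{tr}\Sigma^2}$. Summing over $t$ gives \eqref{eq:E2_theta}.

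The main obstacle is this decoupling of $\varepsilon_t$ from $s_t$. It has to be done \emph{after} Cauchy--Schwarz has separated off $R$, because $R$ depends on $\varepsilon_t$ through the later states. With the ordering above (Cauchy--Schwarz first, then independence inside the fourth-moment factor) the step is legitimate.

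\textbf{Part (ii).} This is easier because the score $Y:=\sum_t(\Sigma^{-1}(\varepsilon_t\odot\varepsilon_t)-\mathbf 1)$ involves only the noise. Since $\bar\varepsilon$ is independent of $s_0$, the distribution of $Y$ is independent of $s_0$. Cauchy--Schwarz gives
\[
\mathbb E[R^2\|Y\|^2\mid s_0]\le\sqrt{\mathbb E[R^4\mid s_0]}\,\sqrt{\mathbb E\|Y\|_2^4}.
\]
Write $\Sigma^{-1}(\varepsilon_t\odot\varepsilon_t)$ componentwise as $z_{t,i}^2$ with $z_{t,i}$ i.i.d.\ standard normal. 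Then $Y_i=\sum_t(z_{t,i}^2-1)$ is a centered $\chi^2_T$-type variable, and $\mathbb E\|Y\|^4$ is a finite constant depending only on $(m,T)$. It can be computed exactly by Isserlis, with leading order $O(m^2T^2)$. Setting $C:=\sqrt{\mathbb E\|Y\|_2^4}$ gives \eqref{eq:E2_ell}. Notably, $C$ does not depend on $\Sigma$, since the $z$'s are standardized.
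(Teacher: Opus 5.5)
Your proposal is correct and follows the paper's proof of part (i) essentially step for step: the sum-of-norms bound, the operator/Frobenius bound, conditional Cauchy--Schwarz to split off $\sqrt{\mathbb E[R(\tau)^4\mid s_0]}$, independence of $\varepsilon_t$ and $s_t$ given $s_0$, and the Gaussian fourth moment, with only the order of the norm bound and Cauchy--Schwarz swapped (which is immaterial). In part (ii) the paper first uses $\|\sum_t q(\varepsilon_t)\|_2^2\le T\sum_t\|q(\varepsilon_t)\|_2^2$ and the one-step moment $\mathbb E\|q(\varepsilon)\|_2^4=4m(m+14)$ to get $C=2T^2\sqrt{m(m+14)}$, whereas your direct Cauchy--Schwarz on the whole sum gives $C=\sqrt{\mathbb E\|Y\|_2^4}=2\sqrt{mT(mT+2T+12)}$; the two constants agree at $T=1$, and yours is otherwise smaller by roughly a factor of $T$.
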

Theorem~\ref{thm:nonlinear-bound} upper-bounds the variance of the REINFORCE estimator in generic nonlinear systems in terms of the return's fourth moment and the Jacobian of the policy mean. The bounds are conditioned on the initial state $s_0$, which can be removed by taking expectation \wrt $s_0$. This reveals a few important factors:

\textbf{Policy covariance $\Sigma$.} The mean-parameter gradient variance scales with $\|\Sigma^{-1}\|_2^2\sqrt{(\operatorname{tr}\Sigma)^2 + 2\,\operatorname{tr}(\Sigma^2)}$, which scales as $O(\sigma^{-2})$ when $\Sigma=\sigma^2 I$. This agrees with the intuition that smaller exploration noise leads to higher variance.

\textbf{Jacobian $J_\theta(s_t)$.} More sensitive policies (larger Jacobians) lead to higher gradient-estimation variance. In particular, optimal controllers in physical systems are often ``bang-bang'' (\eg pendulum and acrobot), corresponding to sharp state-to-action changes and potentially large Jacobians~\cite{aastrom00pendulum-bangbang, han24l4dc-pendulum}.

\textbf{Experiments.} We plot the NSR and objective along Adam optimization trajectories for LQR with a multi-layer perceptron (MLP) policy and for the inverted pendulum problem with an MLP policy in Figure~\ref{fig:nonlinear-traj}. We make two observations: (i) the NSR increases as the optimizer approaches the optimum; and (ii) near the optimum, the objective and NSR exhibit coupled fluctuations (highlighted in red). This suggests that once the optimizer enters a neighborhood of the optimum, the NSR increases and thus leads to unreliable gradient estimates that cause objective drops. We also observe similar coupled fluctuations in the MuJoCo Hopper and HalfCheetah environments (Figure~\ref{fig:mujoco_collapse} in Appendix~\ref{sec:app-add-experiments}), suggesting that high-NSR regimes are associated with instability in policy optimization across different systems.

\vspace{-2mm}
\section{Discussion and Conclusions}
\vspace{-2mm}
\label{sec:conclusions}

We studied the non-uniformity of the noise-to-signal ratio (NSR) of the REINFORCE estimator. For finite-horizon linear systems with linear-Gaussian policies and polynomial systems with Gaussian policies and polynomial feedback, we characterized NSR exactly, either in closed form or via moment-evaluation algorithms. For nonlinear dynamics with expressive Gaussian policies, we derived a general variance upper bound. These results show that NSR varies sharply across policy parameters and typically increases as policies approach optimality, sometimes diverging. This provides a concrete explanation for the oscillations and occasional policy collapse observed in our experiments.

\textbf{Baselines and actor--critic methods.}
Baselines can substantially reduce NSR, but they do not automatically remove the small-covariance pathology. For a baseline estimator, the return target is replaced by the residual $\delta_t := G_t - b_t(s_t)$ where $b_t(s_t)$ is the baseline. In finite-horizon LQG, an exact quadratic baseline can cancel the state-only component of the return when $\Sigma=\sigma^2 I$ and $\sigma\to 0$, thus the resulting estimator becomes $O(1)$ rather than $O(\sigma^{-1})$. However, this cancellation is fragile. If the baseline has approximation error that remains $O(1)$ as $\sigma\to 0$, then the residual remains $O(1)$ and the same small-covariance blow-up can persist. This suggests that approximate learned baselines and critics may reduce NSR in practice, but avoiding the blow-up generally requires sufficiently accurate value-function approximation. A similar interpretation applies to actor--critic estimators.

\textbf{Entropy regularization.}
Entropy regularization provides a complementary stabilization mechanism by discouraging premature collapse of the policy covariance. For the entropy-regularized objective
\[
    J_\tau(K,\Sigma)
    :=
    J(K,\Sigma)
    + \tau \mathbb{E}\!\left[
        \sum_{t=0}^{T-1} \gamma^t
        H(\pi(\cdot\mid s_t))
    \right], \tau>0,
\]
the Gaussian entropy term is explicit in the log-standard deviation. In the diagonal covariance case, the stationary covariance satisfies
\[
    \Sigma^\star_{ii}
    =
    \frac{\tau \sum_{t=0}^{T-1}\gamma^t}{2M_{ii}(K^\star)},~~ M(K)
    :=
    \sum_{t=0}^{T-1}\gamma^t
    \bigl(
    Q_a + \gamma B^\top \Lambda_{t+1} B
    \bigr).
\]
Thus entropy regularization yields a strictly positive target covariance. Plugging this covariance into our exact variance/NSR expressions can be used to select an entropy coefficient or covariance floor corresponding to a desired NSR level. However, if stochastic updates drive the policy covariance far below its entropy-regularized target, the policy can still enter a high-NSR region and become unstable.

Our findings suggest several promising directions for future work. First, developing variance-reduction techniques tailored to high-NSR regimes could improve optimization stability. Second, extending our analysis beyond Gaussian policies to other policy distributions, such as uniform or $\beta$-distributions, would help assess the generality of our conclusions. Third, studying the interaction between NSR and exploration mechanisms, including adaptive covariance and entropy control, may lead to more robust reinforcement-learning algorithms. Finally, while our experiments show that large NSR can induce instability, and our SGD analysis demonstrates that noisy updates have a positive probability of escaping an arbitrarily large ball, a formal characterization of the relationship between the NSR landscape and instability in policy optimization remains an open question.

\section*{Impact Statement}
\label{sec:impact-statement}

This paper presents work whose goal is to advance the field of Machine Learning by characterizing the noise-to-signal ratio of policy-gradient estimators and its implications for optimization stability. The primary expected impact is improved understanding and design of more reliable reinforcement-learning algorithms. While such improvements could enable more capable automated decision-making in real-world systems, they also carry general risks associated with broader deployment of RL. We do not foresee societal or ethical consequences beyond those already well established for reinforcement learning research.

\section*{Conflict of Interest Disclosure}
\label{sec:conflicts-of-interest}
The authors declare no financial conflicts of interest.

\section*{Acknowledgements}\label{sec:acknowledgements}

We thank Jincheng Mei and Na Li for helpful related work discussions; Shucheng Kang for proof-reading the paper, and members of the Harvard Computational Robotics Group for various discussions throughout the project. This project was supported in part by the Office of Naval Research grant N00014-25-1-2322.

\bibliography{../../references/refs.bib,../../references/myRefs.bib}
\bibliographystyle{icml2026}

\newpage
\appendix
\onecolumn

\section{Related Work}
\label{sec:related-work}

\textbf{Policy gradient methods.}
In addition to the algorithms mentioned in Section~\ref{sec:intro}, representative policy-gradient variants include Natural Policy Gradient (NPG) \cite{kakade01neurips-npg}, Deep Deterministic Policy Gradient (DDPG) \cite{lillicrap15arxiv-ddpg}, and Soft Actor--Critic (SAC) \cite{haarnoja18icml-sac}. NPG preconditions the gradient using the Fisher information matrix, but it still estimates gradients using REINFORCE-style likelihood-ratio estimators; therefore, it falls within the scope of our analysis. In contrast, DDPG and SAC are actor--critic methods and use critic-based gradient surrogates.

For policy-gradient methods on LQR/LQG, \cite{fazel18icml-lqrglobal} study infinite-horizon LQR with fixed Gaussian exploration covariance from an optimization perspective, showing that the objective is gradient dominated and that policy gradient converges to the global optimum. \cite{furieri20l4dc-finitelqr} extends these guarantees to a class of finite-horizon LQR problems. \cite{malik20jmlr-stochasticlqroptimization,mohammadi21tac-stochasticlqroptimization} further consider stochastic estimation settings and establish convergence and sample complexity results. \cite{yang19neurips-actorcriticlqr} analyzes the convergence of actor--critic algorithms on LQR. For partially observed LQG, \cite{tang23mp-lqglandscape} characterizes the optimization landscape, including connectivity properties of stable controllers and the structure of stationary points. Beyond LQR/LQG, \cite{mei20icml-globalconvergence,agarwal21jmlr-tabularglobalconveregnce} study global convergence in tabular settings and derive error bounds in terms of distribution shift/transfer.

More closely related to our focus on estimator variability, \cite{zhao11neurips-pgvariance} shows that parameter-based exploration can yield lower-variance policy-gradient estimates than the vanilla REINFORCE estimator; both admit $O(\sigma^{-2})$ upper and lower bounds, where $\sigma$ denotes the standard deviation of a Gaussian policy. Their analysis assumes bounded states and rewards and does not characterize an exact NSR expression. \cite{preiss19arxiv-lqgvariance} derives variance upper bounds for finite-horizon LQG and studies contributing factors empirically, but focuses on stable systems and does not make the scaling with the initial-state distribution or policy covariance explicit. We complement these works by providing exact finite-horizon variance expressions and by extending the analysis to polynomial systems and to generic nonlinear systems.

\textbf{Convergence dynamics.}
\cite{ilyas18arxiv-policycollapse} empirically observes that the cosine similarity between the estimated and true gradients can deteriorate as optimization progresses; we interpret this phenomenon through our NSR analysis. \cite{dohare23ewrl-policycollapse} further reports a ``policy collapse'' phenomenon and relates it to large Adam momentum. A complementary line of work studies how baseline choices affect optimization beyond their impact on variance: \cite{chung21icml-committalbaselines} shows that even when the variance is held fixed, different baseline choices can lead to markedly different convergence behavior, and \cite{mei22neurips-committalbaselines} establishes convergence results in bandit settings while showing that the variance near the optimal policy can be unbounded for any choice of baseline.

\section{Additional Experiments}
\label{sec:app-add-experiments}

\textbf{Comparison between REINFORCE and GPOMDP \cite{baxter01jair-gpomdp}.} In the main text, we focused on the vanilla REINFORCE estimator. We also conducted experiments with the GPOMDP estimator, which replaces the full trajectory return with reward-to-go terms using the Markov property. GPOMDP can have lower variance than REINFORCE \cite{papini22ml-reinforcevariance}. The same moment-evaluation framework can be directly applied to these cases by changing the return $R(\tau)$ to running return $G_t$ which is still a quadratic function of the underlying Gaussian variables. We find that the NSR landscape is qualitatively similar to the REINFORCE case, with the same trend of increasing NSR near optimality. (Figure~\ref{fig:gpomdp}).

\begin{figure}
    \centering
    \includegraphics[width=0.9\textwidth]{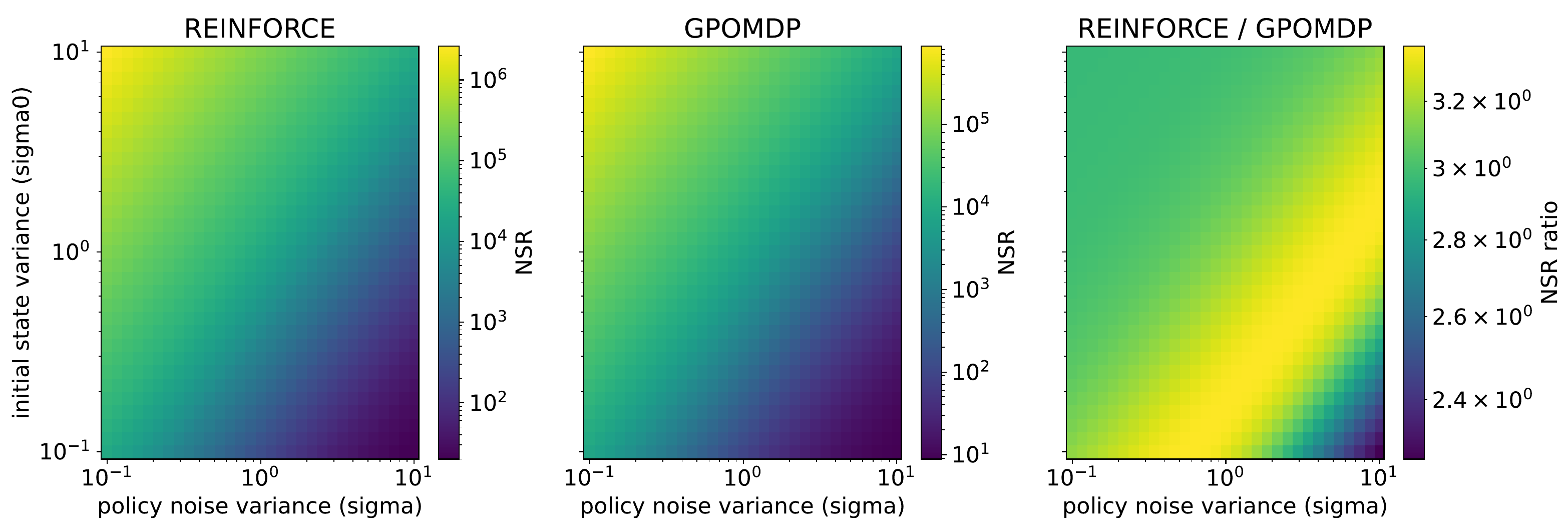}
    \caption{NSR landscape for REINFORCE, GPOMDP and their ratio. The NSR landscape for GPOMDP is qualitatively similar to that of REINFORCE, with the same trend of increasing NSR near optimality.}
    \label{fig:gpomdp}
\end{figure}

\textbf{Monte Carlo estimation of NSR.}
We empirically investigate the accuracy of Monte Carlo estimation of the variance and the NSR. Our results show that Monte Carlo estimates fail to accurately capture the trends of both the variance and the NSR along optimization trajectories, even when using a large number of rollouts (Figure~\ref{fig:mc_nsr}). This highlights the value of exact characterizations of the REINFORCE variance and the NSR, which can further enable rigorous analyses of policy-gradient optimization dynamics.

\begin{figure}
    \centering
    \includegraphics[width=0.9\textwidth]{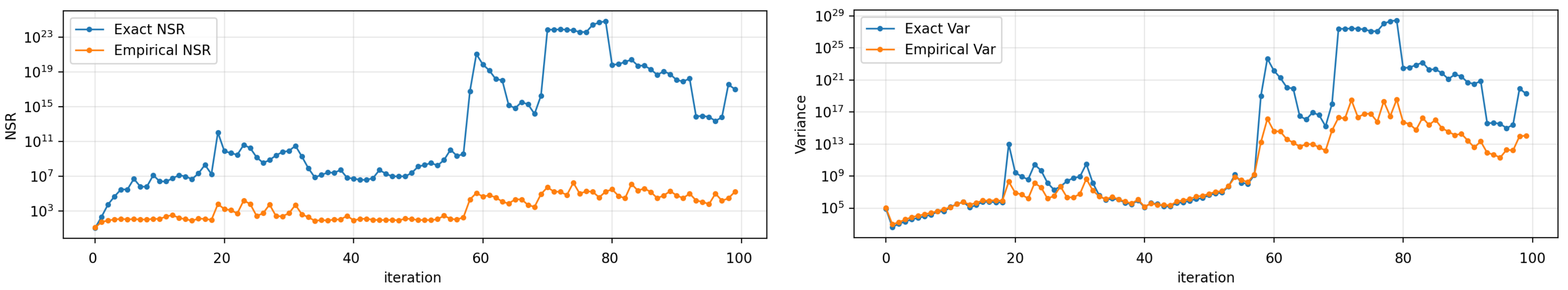}
    \caption{Monte Carlo estimation of the NSR. Orange line: empirical Monte Carlo estimate with 10 minutes limit each iteration. Blue line: exact NSR. Even with a large number of rollouts, the empirical Monte Carlo estimate fails to accurately capture the trends of the variance and the NSR along optimization trajectories.}
    \label{fig:mc_nsr}
\end{figure}

\textbf{Experiment details.}
For all experiments below, we set the initial exploration covariance to $\Sigma_0 = I$. In the multi-step LQG experiments with double-integrator dynamics, we use Adam with learning rate $10^{-3}$, SGD with learning rate $10^{-4}$, and batch size $2^{10}$. In the polynomial-system experiments, we use the same learning rates for Adam and SGD, with batch size $2^{12}$. For the policy-collapse experiments on the quadratic system, we use Adam with learning rate $10^{-2}$ and batch size $2^6$. For the experiments with nonlinear dynamics and MLP policies, we use two-layer MLPs with 64 hidden units and ReLU activations, Adam with learning rate $10^{-2}$, and batch size $2^{12}$. The NSR and variance for the nonlinear dynamics experiments are estimated via Monte Carlo with 1 minute limit per iteration.

\textbf{Additional MuJoCo experiments.} In addition to the pendulum/LQR with MLP policy and polynomial system experiments in the main text, we also conducted experiments on the MuJoCo Hopper and HalfCheetah environments with MLP policies. We train these policies using PPO. We observe coupled fluctuations between NSR and objective value, suggesting that high-NSR regimes are associated with instability in policy optimization (Figure~\ref{fig:mujoco_collapse}).

\begin{figure}
    \centering
    \includegraphics[width=0.9\textwidth]{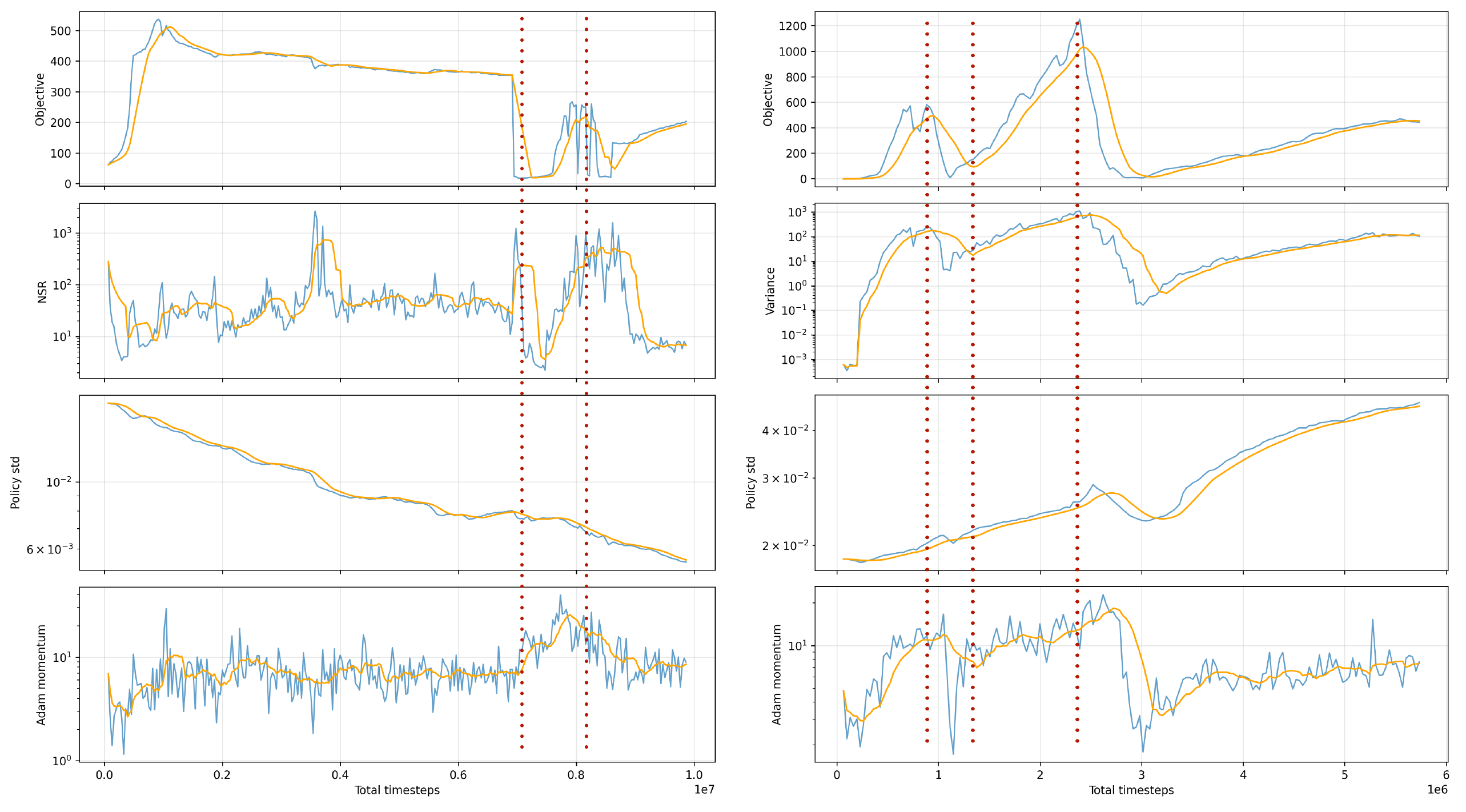}
    \caption{MuJoCo experiments that show instability caused by large NSR. Left: Hopper-v4 environment; Right: HalfCheetah-v4 environment. From top to bottom we show the objective, the NSR, the log standard deviation of the policy covariance, and Adam's momentum. Red dashed lines show the synchronized fluctuations in objective and NSR.}
    \label{fig:mujoco_collapse}
\end{figure}

\section{Proof of Lemma~\ref{lem:lqg-policy-score} }
\label{sec:app-proof-lqg-policy-score}

\begin{proof}
The Gaussian density is
\[
\pi(a\mid s)
= (2\pi)^{-m/2}\,\det(\Sigma)^{-1/2}\,
\exp\!\left(-\frac12 (a-Ks)^\top \Sigma^{-1}(a-Ks)\right).
\]
Let $\varepsilon := a-Ks$. Taking logarithms yields
\[
\log \pi(a\mid s)
= -\frac12 \varepsilon^\top \Sigma^{-1}\varepsilon
-\frac12 \log\det(\Sigma) + c,
\]
where $c$ is a constant independent of $K,\ell$.

\textbf{Gradient \wrt $K$.}
Using differentials, $d\varepsilon = -dK s$, hence
\[
d\big(\varepsilon^\top \Sigma^{-1}\varepsilon\big)
= 2\,\varepsilon^\top \Sigma^{-1} d\varepsilon
= -2\,\varepsilon^\top \Sigma^{-1} (dK)s.
\]
Rewrite the scalar as a trace:
\[
-2\,\varepsilon^\top \Sigma^{-1} (dK)s
= -2\,\mathrm{tr}\!\left( s\,\varepsilon^\top \Sigma^{-1} \, dK \right).
\]
Therefore,
\[
d\log \pi(a\mid s)
= -\frac12\, d(\varepsilon^\top \Sigma^{-1}\varepsilon)
= \mathrm{tr}\!\left( s\,\varepsilon^\top \Sigma^{-1}\, dK \right),
\]
which implies (by the identity $d f = \mathrm{tr}((\nabla_K f)^\top dK)$)
\[
\nabla_K \log \pi(a\mid s)
= \Sigma^{-1}\varepsilon\, s^\top
= \Sigma^{-1}(a-Ks)\,s^\top.
\]

\textbf{Gradient \wrt $\ell=\log\sigma$.}
Since $\Sigma=\mathrm{diag}(\sigma)^2$ with $\sigma_i = e^{\ell_i}$, we have
$\Sigma_{ii} = \sigma_i^2 = e^{2\ell_i}$ and $(\Sigma^{-1})_{ii} = \sigma_i^{-2}$.
The log-density can be written componentwise as
\[
\log \pi(a\mid s)
= -\frac12\sum_{i=1}^m\left(\frac{\varepsilon_i^2}{\sigma_i^2} + \log \sigma_i^2\right) + c
= -\frac12\sum_{i=1}^m\left(\frac{\varepsilon_i^2}{\sigma_i^2} + 2\ell_i\right) + c.
\]
Differentiate each coordinate:
\[
\frac{\partial}{\partial \ell_i}\log \pi(a\mid s)
= -\frac12\left(\frac{\partial}{\partial \ell_i}\frac{\varepsilon_i^2}{\sigma_i^2} + 2\right).
\]
Because $\sigma_i^2=e^{2\ell_i}$, we have $\sigma_i^{-2}=e^{-2\ell_i}$ and
$\frac{\partial}{\partial \ell_i}\sigma_i^{-2} = -2\sigma_i^{-2}$, hence
\[
\frac{\partial}{\partial \ell_i}\frac{\varepsilon_i^2}{\sigma_i^2}
= \varepsilon_i^2 \frac{\partial}{\partial \ell_i}\sigma_i^{-2}
= -2\frac{\varepsilon_i^2}{\sigma_i^2}.
\]
Substituting gives
\[
\frac{\partial}{\partial \ell_i}\log \pi(a\mid s)
= -\frac12\left(-2\frac{\varepsilon_i^2}{\sigma_i^2} + 2\right)
= \frac{\varepsilon_i^2}{\sigma_i^2} - 1.
\]
Stacking the coordinates yields
\[
\nabla_\ell \log \pi(a\mid s)
= \Sigma^{-1}(\varepsilon\odot \varepsilon) - \mathbf{1}.
\]

Finally, since $a\mid s \sim \mathcal{N}(Ks,\Sigma)$, the centered variable
$\varepsilon = a-Ks$ satisfies $\varepsilon \sim \mathcal{N}(0,\Sigma)$.
\end{proof}

\section{Proof of Lemma~\ref{lem:IS_shorthand} }
\label{sec:app-proof-IS-shorthand}

\begin{proof}
    
The expansions in \eqref{eq:IS_123} follow from Isserlis (Wick's) theorem for centered Gaussians applied to
quadratic forms. This is also proved in \cite{magnus78report-gaussianmoments}.

To prove \eqref{eq:IS_positive}, for $A\succeq 0$, we have $x^\top A x = 0$ if and only if $x \in \ker(A)$.
Since each $A_i \succeq 0$ and $A_i\neq 0$, its kernel $\ker(A_i)$ is a proper linear
subspace of $\mathbb{R}^n$. Because $\Omega \succ 0$, the Gaussian $x\sim\mathcal{N}(0,\Omega)$ has a density that is
strictly positive on all of $\mathbb{R}^n$. In particular, any proper linear subspace has
Lebesgue measure zero, hence
\[
\mathbb{P}\big(x \in \ker(A_i)\big) = 0 \qquad \text{for each } i.
\]
Since a finite union of measure-zero sets has measure zero, we have
\[
\mathbb{P}\!\left(x \in \bigcup_{i=1}^k \ker(A_i)\right) = 0,
\quad\text{so}\quad
\mathbb{P}\!\left(x \notin \bigcup_{i=1}^k \ker(A_i)\right) = 1.
\]
Therefore, with probability $1$ we have $x \notin \ker(A_i)$ for every $i$, which implies
$x^\top A_i x > 0$ for every $i$, and hence
\[
\prod_{i=1}^k (x^\top A_i x) > 0 \qquad \text{almost surely.}
\]
Since the integrand is nonnegative and strictly positive almost surely, the expectation must be strictly positive:
\[
\mathbb{E}\!\left[\prod_{i=1}^k (x^\top A_i x)\right] > 0.
\]
\end{proof}
\section{Proof of Theorem \ref{thm:lqg-variance-one-step}}
\label{sec:app-proof-lqg-variance-one-step}

We begin with a lemma that computes expectations of certain polynomial functions of Gaussian random variables.
While such identities are broadly covered by Isserlis' theorem (Wick's formula), those results mainly provide
expressions for generic even-order moments. In our analysis, we require more structured, problem-specific forms.
Magnus~\cite{magnus78report-gaussianmoments} derives formulas for expectations of products of quadratic forms of the type
$\mathbb{E}\!\left[\prod_{i=1}^{N}\big(u^\top A_i u\big)\right]$.
Our setting involves a non-homogeneous polynomial, hence we state the following lemma.

\begin{lemma}\label{thm:wz_std_lemma}
For $\xi\sim \mathcal N(0,I_m)$, define
\[
w(\xi):=\sum_{i=1}^m (\xi_i^2-1)^2.
\]
Then for any $u\in\mathbb R^m$ and any $M\in \sym{m}$,
\begin{align}
\E[w(\xi)] &= 2m, \label{eq:w0}\\
\E\big[w(\xi)\,(\xi^\top u)^2\big] &= (2m+8)\,\|u\|^2, \label{eq:w_u}\\
\E\big[w(\xi)\,(\xi^\top M \xi)\big] &= (2m+8)\,\tr M, \label{eq:w_M}\\
\E\big[w(\xi)\,(\xi^\top M \xi)^2\big]
&= (4m + 32)\,\tr(M^2) + (2m + 16)\,(\tr M)^2 + 24 \sum_{i=1}^m M_{ii}^2.
\label{eq:w_MM}
\end{align}
\end{lemma}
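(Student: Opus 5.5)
The plan is to reduce everything to one-dimensional standard Gaussian moments by exploiting independence of the coordinates of $\xi$. Write $h(x):=(x^2-1)^2=x^4-2x^2+1$, so that $w(\xi)=\sum_{i=1}^m h(\xi_i)$. Using $\E[x^2]=1$, $\E[x^4]=3$, $\E[x^6]=15$ and $\E[x^8]=105$ for $x\sim\mathcal N(0,1)$, we get
\[
\E[h(x)]=2,\qquad \E[h(x)x^2]=10,\qquad \E[h(x)x^4]=78.
\]
Every identity will follow by linearity: we expand the product, take the expectation of $h(\xi_i)$ times a monomial in $\xi$, factor over coordinates, and then sum over $i$. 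Throughout, any monomial containing an odd power of some coordinate has zero mean.

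For \eqref{eq:w0} the computation is immediate: $\E[w]=2m$. For \eqref{eq:w_u}, expand $(\xi^\top u)^2=\sum_{j,k}u_ju_k\xi_j\xi_k$. Only the diagonal terms $j=k$ survive. The term $j=i$ contributes $10u_i^2$, and each term $j\neq i$ contributes $\E[h]\,\E[\xi_j^2]\,u_j^2=2u_j^2$. Summing over $i$ gives $\sum_j u_j^2\,(10+2(m-1))=(2m+8)\|u\|^2$. Identity \eqref{eq:w_M} is the same computation with $u_ju_k$ replaced by $M_{jk}$, which yields $(2m+8)\tr M$.

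For \eqref{eq:w_MM}, the main bookkeeping step, set $q:=\xi^\top M\xi$ and organize the computation as a baseline plus corrections. If $h(\xi_i)$ were independent of $q$, we would get $\E[h(\xi_i)]\,\E[q^2]=2\big((\tr M)^2+2\tr(M^2)\big)$, the standard Isserlis value from Lemma~\ref{lem:IS_shorthand}. The correction comes only from monomials of $q^2$ in which $\xi_i$ appears. We split these by the power of $\xi_i$.
\begin{itemize}
\item \emph{Monomials $\xi_i^2\xi_j^2$ with $j\neq i$.} The weight changes $\E[h]\E[\xi_i^2]=2$ to $\E[h\xi_i^2]=10$, an excess of $8$. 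The coefficient of $\xi_i^2\xi_j^2$ in $q^2$ is $2M_{ii}M_{jj}+4M_{ij}^2$.
\item \emph{The monomial $\xi_i^4$.} The excess is $78-2\cdot 3=72$, with coefficient $M_{ii}^2$.
\end{itemize}
Summing the corrections over $i$ gives
\[
16\Big((\tr M)^2-\sum_i M_{ii}^2\Big)+32\Big(\tr(M^2)-\sum_i M_{ii}^2\Big)+72\sum_i M_{ii}^2 .
\]
Adding the baseline $2m(\tr M)^2+4m\tr(M^2)$ then yields $(2m+16)(\tr M)^2+(4m+32)\tr(M^2)+24\sum_i M_{ii}^2$, as claimed.

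The only delicate step is getting the coefficients of $\xi_i^2\xi_j^2$ in $q^2$ right. The term $2M_{ii}M_{jj}$ comes from pairing the diagonal entries of the two factors. The term $4M_{ij}^2$ comes from the cross terms $M_{ij}\xi_i\xi_j$, each of which appears twice per factor by symmetry of $M$. One must also avoid double counting across the sum over $i$. A quick sanity check I will carry out is the case $m=1$, $M=1$: here $\E[h(x)x^4]=78$ must equal $(2+16)+(4+32)+24=78$, which it does.
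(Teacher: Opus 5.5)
Your proof is correct and follows essentially the paper's route: you expand into monomials, discard any term with an odd power of some coordinate, and evaluate the rest using independence and the one-dimensional moments $1,3,15,105$. Your baseline-plus-excess bookkeeping (excesses $8$ and $72$ on top of $\E[h]\,\E[q^2]$) is just a reordering of the paper's direct evaluations $\E[w\,\xi_i^4]=6m+72$ and $\E[w\,\xi_i^2\xi_j^2]=2m+16$. Your coefficient $2M_{ii}M_{jj}+4M_{ij}^2$ for $\xi_i^2\xi_j^2$ is also the right one: the paper's intermediate display writes the off-diagonal part as $2\sum_{i<j}M_{ij}^2 z_i^2z_j^2$, which undercounts by a factor of two, although its final formula agrees with yours.
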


\begin{proof}
To prove \eqref{eq:w0}, use $\E[\xi^8]=105,\,\E[\xi^6]=15,\,\E[\xi^4]=3,\,\E[\xi^2]=1$:
\[
\E[w(\xi)] = \sum_{i=1}^m \E[(\xi_i^2-1)^2] = \sum_{i=1}^m (3 - 2 + 1) = 2m.
\]

For \eqref{eq:w_u}, expand and note that cross terms with $u_i u_j$ $(i\neq j)$ vanish by odd-moment cancellation,
so only $u_i^2$ terms remain:
\[
\E\big[w(\xi)(\xi^\top u)^2\big]
= \sum_{i=1}^m \sum_{j=1}^m \E\big[(\xi_i^2-1)^2 \xi_j^2\big]\, u_j^2.
\]
Compute
\[
\E\big[(\xi_1^2-1)^2 \xi_1^2\big]=\E[\xi_1^6]-2\E[\xi_1^4]+\E[\xi_1^2]=15-6+1=10,
\qquad
\E\big[(\xi_i^2-1)^2 \xi_1^2\big]=2\ (i\neq 1),
\]
hence $\E[w(\xi)(\xi^\top u)^2]=\|u\|^2(10+2(m-1))=(2m+8)\|u\|^2$.

For \eqref{eq:w_M}, similarly only diagonal entries contribute:
\[
\E[w(\xi)(\xi^\top M \xi)]
= \sum_{i=1}^m\sum_{j=1}^m \E[(\xi_i^2-1)^2\xi_j^2]\, M_{jj}
= (2(m-1)+10)\tr M
= (2m+8)\tr M.
\]

For \eqref{eq:w_MM}, expand the quadratic form as
\[
(z^\top M z)^2=\sum_{i,j,k,l=1}^m M_{ij}M_{kl}\,z_i z_j z_k z_l.
\]
After cancelling the odd terms, only two index patterns contribute: (i) $i=j$ and $k=l$ (diagonal--diagonal terms), and (ii) $\{i,j\}=\{k,l\}$ with $i\neq j$ (off-diagonal pairings), yielding
\[
(z^\top M z)^2
=\sum_{i,k} M_{ii}M_{kk}\,z_i^2 z_k^2
\;+\;
2\sum_{i<j} M_{ij}^2\,z_i^2 z_j^2
\;+\;(\text{terms with vanishing expectation}).
\]
We then evaluate $\E[w(z)z_i^2 z_k^2]$ by splitting into the cases $i=k$ and $i\neq k$ and using the one-dimensional moments
$\E[z^2]=1,\E[z^4]=3,\E[z^6]=15,\E[z^8]=105$, which gives
\[
\E[w(z)\,z_i^4]=\E[(z_i^2-1)^2 z_i^4]+(m-1)\E[(z_1^2-1)^2]\E[z_i^4]=78+6(m-1),
\]
\[
\E[w(z)\,z_i^2 z_k^2]=\E[(z_i^2-1)^2 z_i^2]\E[z_k^2]+\E[(z_k^2-1)^2 z_k^2]\E[z_i^2]+(m-2)\E[(z_1^2-1)^2]\E[z_i^2]\E[z_k^2]
=20+2(m-2),
\quad (i\neq k),
\]
and similarly
\[
\E[w(z)\,z_i^2 z_j^2]=2m+16\qquad (i\neq j).
\]
Substituting these values into the diagonal--diagonal and off-diagonal contributions and collecting terms yields
\[
\E \big[w(z)\,(z^\top M z)^2\big]
= (4m + 32)\,\tr(M^2) + (2m + 16)\,(\tr M)^2 + 24\sum_{i=1}^m M_{ii}^2,
\]
concluding the proof.
\end{proof}

Next we prove Theorem~\ref{thm:lqg-variance-one-step}.

\begin{proof}
\textbf{Proof of $K$ part.}
Using
\[
s_1 = F s_0 + B\varepsilon_0,\quad
a_0 = K s_0 + \varepsilon_0,\quad
r_0 =-\big(s_1^\top Q_s s_1+a_0^\top Q_a a_0\big),
\]
we can expand $r_0$ as
\[
r_0 = -(x + 2y + z),
\]
where
\begin{align}\label{eq:lqr-decompose-r0}
x := s_0^\top M_{ss} s_0,\quad
y := s_0^\top M_{se}\varepsilon_0,\quad
z := \varepsilon_0^\top M_{ee}\varepsilon_0,
\end{align}
and
\[
M_{ss}:=F^\top Q_sF+K^\top Q_a K,\quad
M_{se}:=F^\top Q_sB+K^\top Q_a,\quad
M_{ee}:=B^\top Q_s B+Q_a.
\]
Note $M_{ss}, M_{ee}$ are symmetric and positive semidefinite. The REINFORCE estimator is
\[
\widehat G_K = \nabla_K\log\pi(a_0\mid s_0)\,r_0
= \Sigma^{-1}\varepsilon_0 s_0^\top\, (-(x+2y+z)).
\]
Since $x$ depends only on $s_0$ and $z$ depends only on $\varepsilon_0$, and $s_0\perp\!\!\!\perp \varepsilon_0$ with
$\E[\varepsilon_0]=0$, only the $y$ term remains in expectation:
\[
\E[\widehat G_K]
=
\E\!\left[\Sigma^{-1}\varepsilon_0 s_0^\top (-2y)\right]
= -2\,\Sigma^{-1}\E[\varepsilon_0\varepsilon_0^\top]\,M_{se}^\top \E[s_0 s_0^\top]
= -2\,M_{se}^\top \Sigma_0
= -2\,(Q_aK+B^\top Q_sF)\Sigma_0.
\]

For the second moment, using $\|AB\|_{\mathrm{F}}^2=\tr(B^\top A^\top A B)$,
\[
\|\widehat G_K\|_{\mathrm{F}}^2
= (\varepsilon_0^\top \Sigma^{-2} \varepsilon_0)\,(s_0^\top s_0)\,(x+2y+z)^2.
\]
By independence and odd-moment cancellation,
\[
\E\big[\|\widehat G_K\|_{\mathrm{F}}^2\big]
=
\E\Big[(\varepsilon_0^\top \Sigma^{-2} \varepsilon_0)\,(s_0^\top s_0)\,(x^2 + z^2 + 2xz + 4y^2)\Big]
=
E_{1,K}+E_{2,K}+E_{3,K}+E_{4,K},
\]
with
\begin{align*}
E_{1,K} &:= \E[\varepsilon_0^\top \Sigma^{-2} \varepsilon_0]\;\E\big[(s_0^\top s_0)\,x^2\big],\\
E_{2,K} &:= \E[s_0^\top s_0]\;\E\big[(\varepsilon_0^\top \Sigma^{-2} \varepsilon_0)\,z^2\big],\\
E_{3,K} &:= 2\,\E\big[(s_0^\top s_0)\,x\big]\;\E\big[(\varepsilon_0^\top \Sigma^{-2} \varepsilon_0)\,z\big],\\
E_{4,K} &:= 4\,\E\!\left[(s_0^\top s_0)\,\E\!\left[(\varepsilon_0^\top \Sigma^{-2} \varepsilon_0)\,y^2\,\middle|\,s_0\right]\right].
\end{align*}
We rewrite each term using Lemma~\ref{lem:IS_shorthand}. First,
\[
\E[\varepsilon_0^\top \Sigma^{-2}\varepsilon_0]
=\mathrm{IS}_{\Sigma}\big(\Sigma^{-2}\big),
\qquad
\E[s_0^\top s_0]=\mathrm{IS}_{\Sigma_0}(I_n).
\]
Also,
\[
\E[(s_0^\top s_0)x^2]
=
\mathrm{IS}_{\Sigma_0}\!\big(I_n,M_{ss},M_{ss}\big),
\qquad
\E[(s_0^\top s_0)x]
=
\mathrm{IS}_{\Sigma_0}\!\big(I_n,M_{ss}\big),
\]
and since $z=\varepsilon_0^\top M_{ee}\varepsilon_0$,
\[
\E\big[(\varepsilon_0^\top \Sigma^{-2}\varepsilon_0)\,z\big]
=
\mathrm{IS}_{\Sigma}\big(\Sigma^{-2},M_{ee}\big),
\qquad
\E\big[(\varepsilon_0^\top \Sigma^{-2}\varepsilon_0)\,z^2\big]
=
\mathrm{IS}_{\Sigma}\big(\Sigma^{-2},M_{ee},M_{ee}\big).
\]

It remains to compute $E_{4,K}$. Since $y=s_0^\top M_{se}\varepsilon_0$,
\[
y^2=\varepsilon_0^\top X(s_0)\varepsilon_0,
\qquad
X(s_0):=M_{se}^\top s_0 s_0^\top M_{se}.
\]
Conditioning on $s_0$ and applying Lemma~\ref{lem:IS_shorthand} with $k=2$,
\[
\E_{\varepsilon_0}\!\left[
(\varepsilon_0^\top \Sigma^{-2}\varepsilon_0)\,y^2
\;\middle|\;s_0\right]
=
\mathrm{IS}_{\Sigma}\big(\Sigma^{-2},X(s_0)\big)
=
\tr(\Sigma\Sigma^{-2})\tr(\Sigma X(s_0))
+2\,\tr(\Sigma\Sigma^{-2}\Sigma X(s_0)).
\]
Using $\tr(\Sigma\Sigma^{-2})=\mathrm{IS}_{\Sigma}(\Sigma^{-2})$ and $\Sigma\Sigma^{-2}\Sigma=I$, and defining
\[
U := M_{se}M_{se}^\top,\qquad
V := M_{se}\Sigma M_{se}^\top,
\]
we have
\[
\tr\big(X(s_0)\big) = s_0^\top U s_0,
\qquad
\tr\big(\Sigma X(s_0)\big) = s_0^\top V s_0.
\]
Therefore,
\[
\E_{\varepsilon_0}\!\left[
(\varepsilon_0^\top \Sigma^{-2}\varepsilon_0)\,y^2
\;\middle|\;s_0\right]
=
\mathrm{IS}_{\Sigma}\big(\Sigma^{-2}\big)\,(s_0^\top V s_0)
+2\,(s_0^\top U s_0).
\]
Multiplying by $(s_0^\top s_0)=(s_0^\top I_n s_0)$ and taking expectation over $s_0$ gives
\[
E_{4,K}
=
4\Big(
\mathrm{IS}_{\Sigma}\big(\Sigma^{-2}\big)\;
\mathrm{IS}_{\Sigma_0}\big(I_n,V\big)
+
2\,\mathrm{IS}_{\Sigma_0}\big(I_n,U\big)
\Big).
\]

\medskip
\textbf{Proof of log-std part.}
Write $\varepsilon_0=\Sigma^{1/2}\xi$ with
$\xi\sim\mathcal N(0,I_m)$ and define
\[
q := \Sigma^{-1}(\varepsilon_0\odot \varepsilon_0) - \mathbf 1
= \xi\odot\xi-\mathbf 1,
\qquad
S:=\Sigma^{1/2}M_{ee}\Sigma^{1/2}.
\]
Since $s_0$ and $\varepsilon_0$ are independent,
\[
\E[\widehat G_\ell]
= \E[q\,r_0]
= -\,\E[q\,z],
\qquad
z=\varepsilon_0^\top M_{ee}\varepsilon_0=\xi^\top S\xi.
\]
For the $i$-th component,
\[
\big[\E(q\,z)\big]_i
= \sum_{j,k} S_{jk}\,\E\big[(\xi_i^2-1)\,\xi_j\xi_k\big].
\]
By independence,
\[
\E\big[(\xi_i^2-1)\,\xi_j\xi_k\big]
=\begin{cases}
\E[\xi_i^4-\xi_i^2]=3-1=2, & j=k=i,\\
0, & \text{otherwise},
\end{cases}
\]
so $\big[\E(q\,z)\big]_i = 2\,S_{ii}$ and hence
\[
\E[\widehat G_\ell]
= -2\,\operatorname{diag}(S)
= -2\,\operatorname{diag}\!\big(\Sigma^{1/2}M_{ee}\Sigma^{1/2}\big)
= -2\,\operatorname{diag}\!\big(\Sigma M_{ee}\big),
\]
where the last equality uses that $\Sigma$ is diagonal. 

For the second moment, $\widehat G_\ell = q\,r_0$ and $r_0^2=(x+2y+z)^2$.
Using independence and odd-moment cancellation,
\begin{align*}
\E\big[\|\widehat G_\ell\|_2^2\big]
&= \E\big[\|q\|_2^2\big]\;\E[x^2]
+ \E\big[\|q\|_2^2 z^2\big]
+ 2\,\E[x]\;\E\big[\|q\|_2^2 z\big]
+ 4\,\E\big[\|q\|_2^2 y^2\big].
\end{align*}
The $x$-moments follow from Lemma~\ref{lem:IS_shorthand}:
\[
x=s_0^\top M_{ss}\,s_0,
\qquad
\E[x]=\mathrm{IS}_{\Sigma_0}\!\big(M_{ss}\big),
\qquad
\E[x^2]=\mathrm{IS}_{\Sigma_0}\big(M_{ss},M_{ss}\big).
\]
For the $\varepsilon_0$-moments, $z=\xi^\top S\xi$ and $\|q\|_2^2=\sum_{i=1}^m(\xi_i^2-1)^2=w(\xi)$, so Lemma~\ref{thm:wz_std_lemma} gives
\[
\E[\|q\|_2^2]=2m,\qquad
\E[\|q\|_2^2 z]=(2m+8)\tr(S),
\]
\[
\E[\|q\|_2^2 z^2]
=(2m + 16)\,\tr(S)^2+(4m+32)\,\tr(S^2) + 24 \sum_{i=1}^m S_{ii}^2.
\]
Finally, for $y=s_0^\top M_{se}\varepsilon_0=s_0^\top M_{se}\Sigma^{1/2}\xi$, define
\[
u(s_0):=\Sigma^{1/2}M_{se}^\top s_0\in\mathbb R^m,
\qquad
y=u(s_0)^\top \xi.
\]
Then $y^2=(u(s_0)^\top\xi)^2$ and Lemma~\ref{thm:wz_std_lemma} implies
\begin{align*}
\E[\|q\|_2^2\,y^2]
&=\E_{s_0}\!\left[\E_{\xi}\!\left[w(\xi)\,(\xi^\top u(s_0))^2\mid s_0\right]\right]
=\E_{s_0}\!\left[(2m+8)\,\|u(s_0)\|^2\right]\\
&=(2m+8)\,\E_{s_0}\!\left[s_0^\top(M_{se}\Sigma M_{se}^\top)s_0\right]
=(2m+8)\,\tr\!\big(\Sigma_0\,M_{se}\Sigma M_{se}^\top\big)\\
&=(2m+8)\,\mathrm{IS}_{\Sigma_0}\!\big(M_{se}\Sigma M_{se}^\top\big),
\end{align*}
concluding the proof.
\end{proof}

\section{Proof of Theorem~\ref{thm:lqg-escape-ball} }
\label{sec:app-proof-escape-ball}

\begin{proof}
Fix \(R>0\). Define
\[
m(\sigma):=\sigma-\eta g(\sigma).
\]
Since \(g\) is continuous at \(0\) and \(g(0)=0\), we have
\[
m(\sigma)\to 0
\qquad \text{as } \sigma\to 0.
\]
Hence, for all sufficiently small \(\delta>0\),
\[
B_\delta:=\sup_{|\sigma|\le \delta}|m(\sigma)|<\infty,
\]
and moreover
\[
B_\delta\to 0
\qquad \text{as } \delta\downarrow 0.
\]

Choose \(\delta>0\) sufficiently small so that
\[
\frac{c}{\delta^2}>
\left(\frac{R+B_\delta}{\eta}\right)^2.
\]
This is possible because \(c/\delta^2\to\infty\) as \(\delta\downarrow0\), while
\(B_\delta\to0\).

Now fix \(k\) and suppose that
\[
0<|\sigma_k|\le \delta.
\]
By the assumed lower bound on the conditional second moment,
\[
\mathbb E[\varepsilon_{k+1}^2\mid \sigma_k]
\ge
\frac{c}{\sigma_k^2}
\ge
\frac{c}{\delta^2}
>
\left(\frac{R+B_\delta}{\eta}\right)^2.
\]
Therefore, it cannot be true that
\[
|\varepsilon_{k+1}|
\le
\frac{R+B_\delta}{\eta}
\qquad \text{with conditional probability }1
\text{ given } \sigma_k,
\]
because otherwise we would have
\[
\mathbb E[\varepsilon_{k+1}^2\mid \sigma_k]
\le
\left(\frac{R+B_\delta}{\eta}\right)^2,
\]
contradicting the preceding strict inequality. Hence
\[
\mathbb P\!\left(
|\varepsilon_{k+1}|>\frac{R+B_\delta}{\eta}
\,\middle|\,
\sigma_k
\right)>0.
\]

On the event
\[
\left\{
|\varepsilon_{k+1}|>\frac{R+B_\delta}{\eta}
\right\},
\]
we have, using the update rule,
\[
|\sigma_{k+1}|
=
|m(\sigma_k)-\eta\varepsilon_{k+1}|
\ge
\eta|\varepsilon_{k+1}|-|m(\sigma_k)|.
\]
Since \(|\sigma_k|\le \delta\), we have \(|m(\sigma_k)|\le B_\delta\). Therefore,
\[
|\sigma_{k+1}|
>
(R+B_\delta)-B_\delta
=
R.
\]
Thus,
\[
\mathbb P\bigl(|\sigma_{k+1}|>R\mid \sigma_k\bigr)>0.
\]
This proves the claim.
\end{proof}
\section{Proof of Theorem \ref{thm:lqr-lift} }
\label{sec:app-proof-lqg-lift}

\begin{proof}
    From
      \[
      \bar s=\mathcal F_S s_0+\mathcal F_E \bar\varepsilon,\qquad
      \bar s^+=\mathcal F_S^+ s_0+\mathcal F_E^+ \bar\varepsilon,\qquad
      \bar a=\mathcal K_S s_0+\mathcal K_E \bar\varepsilon,
      \]
    and using $s_{t+1} = F s_t + B \varepsilon_t$, we write out the explicit forms:
      
    \medskip
    \noindent\textbf{State maps.}
    \begin{align}\label{eq:lqr-lift-state-maps}
      \mathcal F_S=\begin{bmatrix}
      I\\[2pt] F\\[2pt] F^2\\[-2pt]\vdots\\[2pt] F^{T-1}
      \end{bmatrix}\!\in\mathbb R^{nT\times n},
      \qquad
      \mathcal F_S^+=\begin{bmatrix}
      F\\[2pt] F^2\\[-2pt]\vdots\\[2pt] F^{T}
      \end{bmatrix}\!\in\mathbb R^{nT\times n},
    \end{align}
  
    \[
    \mathcal F_E=
    \begin{bmatrix}
    \;0\;&\;0\;&\cdots&\;0&\;0\\
    B&0&\cdots&0&0\\
    FB&B&\ddots&\vdots&0\\
    \vdots&\ddots&\ddots&0&0\\
    F^{T-2}B&\cdots&FB&B&0 
    \end{bmatrix}\!\in\mathbb R^{nT\times mT},
    \qquad
    \mathcal F_E^+=
    \begin{bmatrix}
    B&0&\cdots&0\\
    FB&B&\ddots&\vdots\\
    \vdots&\ddots&\ddots&0\\
    F^{T-1}B&\cdots&FB&B
    \end{bmatrix}\!\in\mathbb R^{nT\times mT}.
    \]
    Equivalently, for \(t,i\in\{1,\ldots,T\}\),
    \[
    [\mathcal F_E]_{t,i}=
    \begin{cases}
    F^{\,t-1-i}B,& i\le t-1,\\[2pt]
    0,& i\ge t,
    \end{cases}
    \qquad
    [\mathcal F_E^+]_{t,i}=
    \begin{cases}
    F^{\,t-i}B,& i\le t,\\[2pt]
    0,& i>t.
    \end{cases}
    \]
    
    \medskip
    \noindent\textbf{Action maps.}
  
    \[
    \mathcal K_S=
    \begin{bmatrix}
    K\\[2pt] KF\\[-2pt]\vdots\\[2pt] KF^{T-1}
    \end{bmatrix}\!\in\mathbb R^{mT\times n},
    \qquad
    \mathcal K_E=
    \begin{bmatrix}
    I_m&0&\cdots&0&0\\
    KB&I_m&\ddots&\vdots&0\\
    KFB&KB&\ddots&0&0\\
    \vdots&\ddots&\ddots&I_m&0\\
    KF^{T-2}B&\cdots&KFB& KB&I_m
    \end{bmatrix}\!\in\mathbb R^{mT\times mT}.
    \]
    Equivalently, for \(t,i\in\{1,\ldots,T\}\),
    \[
    [\mathcal K_E]_{t,i}=
    \begin{cases}
    K F^{\,t-1-i} B,& i\le t-1,\\[2pt]
    I_m,& i=t,\\[2pt]
    0,& i>t.
    \end{cases}
    \]
\end{proof}

\section{Proof of Theorem \ref{thm:lqr-variance-T-compose}}
\label{sec:app-proof-lqr-variance-T-compose}

\begin{proof}
Recall the policy gradient estimators
\[
\widehat G_K
=\sum_{t=0}^{T-1}\Sigma^{-1}\varepsilon_t s_t^\top\,R(\tau),
\qquad
\widehat G_{\ell}
=\sum_{t=0}^{T-1}\Big(\Sigma^{-1}(\varepsilon_t\odot \varepsilon_t)-\mathbf 1\Big)\,R(\tau),
\]
and the stacked noise $\bar\varepsilon:=(\varepsilon_0^\top,\dots,\varepsilon_{T-1}^\top)^\top\sim\mathcal N(0,\overline\Sigma)$
with $\overline\Sigma:=I_T\otimes\Sigma$.


\paragraph{Second moment expansions.}
Using $\|AB\|_{\mathrm{F}}^2=\tr(B^\top A^\top A B)$,
\[
\|\widehat G_K\|_{\mathrm{F}}^2
=\Big\|\sum_{t=0}^{T-1}\Sigma^{-1}\varepsilon_t s_t^\top\Big\|_{\mathrm{F}}^2\,R(\tau)^2
=\sum_{t,p=0}^{T-1}\big(\varepsilon_t^\top\Sigma^{-2}\varepsilon_p\big)\,(s_t^\top s_p)\,R(\tau)^2.
\]
For the log-std part, define
\[
g_t := \Sigma^{-1}(\varepsilon_t\odot \varepsilon_t)-\mathbf 1\in\mathbb R^m.
\]
Then
\[
\|\widehat G_{\ell}\|_2^2
=\Big\|\sum_{t=0}^{T-1} g_t\Big\|_2^2 R(\tau)^2
=\sum_{t,p=0}^{T-1}(g_t^\top g_p)\,R(\tau)^2.
\]

Using the lifted decomposition $R(\tau)=-(x+2y+z)$ with
\[
x=s_0^\top\overline M_{ss}s_0,\qquad
y=s_0^\top\overline M_{se}\bar\varepsilon,\qquad
z=\bar\varepsilon^\top\overline M_{ee}\bar\varepsilon,
\]
we have
\begin{equation}\label{eq:R2_expand_full}
R(\tau)^2=(x+2y+z)^2 = x^2+z^2+2xz+4y^2 + 4xy + 4yz .
\end{equation}
Therefore
\begin{equation}\label{eq:EGnorm_expand_tp}
\mathbb E\big[\|\widehat G_K\|_{\mathrm{F}}^2\big]
=\sum_{t,p=0}^{T-1}\mathbb E\Big[(\varepsilon_t^\top\Sigma^{-2}\varepsilon_p)(s_t^\top s_p)\,
\big(x^2+z^2+2xz+4y^2+4xy+4yz\big)\Big].
\end{equation}

\paragraph{Even-only reduction for the $\ell$-part.}
Note that $g_t^\top g_p$ is an \emph{even} polynomial in $\bar\varepsilon$, while $xy$ and $yz$ have \emph{odd} total
degree in $\bar\varepsilon$; hence
\[
\mathbb E_{\bar\varepsilon} \big[(g_t^\top g_p)\,(4xy+4yz)\mid s_0\big]=0,
\]
and therefore
\begin{equation}\label{eq:EGell_even_only}
\mathbb E\big[\|\widehat G_{\ell}\|_2^2\big]
=\sum_{t,p=0}^{T-1}\mathbb E\Big[(g_t^\top g_p)\,\big(x^2+z^2+2xz+4y^2\big)\Big].
\end{equation}


\paragraph{Multi-step gradient.}
Consider the original dynamics~\eqref{eq:lqr-dynamics-reward} (not the lifted one). Since $\mathbb E[s_0]=0$ and $\mathbb E[\varepsilon_t]=0$, it follows by induction that $\mathbb E[s_t]=0$ for all $t$. Let $P_t:=\text{Cov}(s_t)$. Using independence of $s_t$ and $\varepsilon_t$,
\[
P_{t+1}=\text{Cov}(Fs_t+B\varepsilon_t)=F P_t F^\top + B\Sigma B^\top,\qquad P_0=\Sigma_0.
\]

Therefore
\[
\mathbb E\big[s_{t+1}^\top Q_s\, s_{t+1}\big]=\tr(Q_s\, P_{t+1}).
\]
Also $a_t=K s_t+\varepsilon_t$ is zero-mean with
\[
\text{Cov}(a_t)=K P_t K^\top + \Sigma
\quad\Rightarrow\quad
\mathbb E\big[a_t^\top Q_a\, a_t\big]=\tr \big(Q_a(K P_t K^\top+\Sigma)\big)
= \tr(Q_a K P_t K^\top)+\tr(Q_a\Sigma).
\]
Taking expectations in $r(s_{t+1},a_t)=-(s_{t+1}^\top Q_s\, s_{t+1}+a_t^\top Q_a\, a_t)$ and summing over $t=0,\dots,T-1$ with discount $\gamma^t$ yields
\begin{align}\label{eq:lqr-objective-matrix-form}
  J_T(K,\Sigma)
= -\sum_{t=0}^{T-1}\gamma^{t} \left(\tr(Q_s\,P_{t+1})
+\tr(Q_a\,K P_t K^\top)+\tr(Q_a\Sigma)\right).
\end{align}
To take the gradient of \eqref{eq:lqr-objective-matrix-form}, we use a Lagrangian to enforce the covariance dynamics $P_{t+1}=F P_t F^\top + B\Sigma B^\top$.
Introduce multipliers $\{\Lambda_{t+1}\}_{t=0}^{T-1}$ and set
\[
\mathcal L
= -\sum_{t=0}^{T-1}\gamma^{t} \left(\tr(Q_s\,P_{t+1})+\tr(Q_a\,K P_t K^\top)+\tr(Q_a\Sigma)\right)
\;+\;\sum_{t=0}^{T-1}\gamma^{t+1}\tr \Big(\Lambda_{t+1}\,(P_{t+1}-F P_t F^\top - B\Sigma B^\top)\Big).
\]

(i) Stationarity \wrt  $P_t$.
For $t=1,\dots,T-1$, the matrix $P_t$ appears in the terms at indices $t-1$ and $t$:
\[
\partial_{P_t}\mathcal L:\quad
-\gamma^{t-1}Q_s\;-\gamma^{t}K^\top Q_a K \;+\;\gamma^{t}\Lambda_t\;-\;\gamma^{t+1}F^\top \Lambda_{t+1}F \;=\;0.
\]
Dividing by $\gamma^{t}$ yields the recursion
$\Lambda_t=\tfrac{1}{\gamma}Q_s+K^\top Q_a K+\gamma F^\top\Lambda_{t+1}F$.
For $t=T$, only the $(T-1)$-indexed terms contain $P_T$, giving
$-\gamma^{T-1}Q_s+\gamma^{T}\Lambda_T=0$, hence $\Lambda_T=\tfrac{1}{\gamma}Q_s$.

(ii) Envelope / stationarity \wrt  $K$ and $\Sigma$.
Only two parts of $\mathcal L$ depend explicitly on $K$:
the quadratic term in $a_t$ and the dynamics through $F=A+BK$.
Using $dF=B\,dK$, symmetricity of $P_t, \Lambda_t$, and standard identities for matrix differentials,
\[
d\,\tr(Q_a\,K P_t K^\top) \;=\; 2\,\tr \big((Q_a K P_t)^\top dK\big),
\quad
d\,\tr(\Lambda_{t+1} F P_t F^\top)
=\;2\,\tr \big((B^\top\Lambda_{t+1}F P_t)^\top dK\big).
\]
Therefore,
\[
\partial_K\mathcal L
= -\,2\sum_{t=0}^{T-1}\gamma^t\,(Q_a K P_t)
\;-\;2\sum_{t=0}^{T-1}\gamma^{t+1}\,(B^\top\Lambda_{t+1}F P_t)
= -\,2\sum_{t=0}^{T-1}\gamma^{t}\Big( Q_a K P_t \;+\; \gamma\, B^\top \Lambda_{t+1} F\, P_t \Big)
\]

Only two places depend on $\Sigma$:
the immediate term $-\gamma^t\tr(Q_a\Sigma)$ and the constraint term
$-\gamma^{t+1}\tr(\Lambda_{t+1}\,B\Sigma B^\top)$. Then
\[
\frac{\partial \mathcal L}{\partial \Sigma}
= -\sum_{t=0}^{T-1}\gamma^t Q_a \;-\; \sum_{t=0}^{T-1}\gamma^{t+1}\,B^\top\Lambda_{t+1}B = -\sum_{t=0}^{T-1}\gamma^t\Big( Q_a \;+\; \gamma\, B^\top \Lambda_{t+1} B \Big),
\]
Therefore, the gradient with respect to the log-std parameter \(\ell\) is
\begin{equation}
\nabla_\ell J_T
\;=\;
-2\,\diag{\Sigma\sum_{t=0}^{T-1}\gamma^t\Big( Q_a \;+\; \gamma\, B^\top \Lambda_{t+1} B \Big)}
\;\;\in\mathbb{R}^{m}.
\end{equation}


\paragraph{Variance of $K$-part: selectors and lifted state expansion.}
Introduce block selectors $\Pi_t$ such that $\varepsilon_t=\Pi_t\bar\varepsilon$. Then
\[
\varepsilon_t^\top\Sigma^{-2}\varepsilon_p
=\bar\varepsilon^\top W^K_{tp}\bar\varepsilon,
\qquad
W^K_{tp}:=\Pi_t^\top\Sigma^{-2}\Pi_p.
\]
Also, with $s_t=S_t\bar s$ and $\bar s=\mathcal F_S s_0+\mathcal F_E\bar\varepsilon$, define
\[
F_t:=S_t\mathcal F_S,\qquad E_t:=S_t\mathcal F_E,
\]
so that $s_t=F_t s_0+E_t\bar\varepsilon$ and hence
\begin{equation}\label{eq:stsp_full_expand}
s_t^\top s_p
=s_0^\top(F_t^\top F_p)s_0
+s_0^\top(F_t^\top E_p+F_p^\top E_t)\bar\varepsilon
+\bar\varepsilon^\top(E_t^\top E_p)\bar\varepsilon.
\end{equation}
Define
\[
G_{tp}:=F_t^\top F_p,\qquad
J_{tp}:=F_t^\top E_p+F_p^\top E_t,\qquad
U_{tp}:=E_t^\top E_p.
\]

\paragraph{Parity bookkeeping after conditioning on $s_0$ ($K$-part).}
Condition on $s_0$. Since $s_0 \perp\!\!\!\perp \bar\varepsilon$ and $\bar\varepsilon$ is centered Gaussian,
only even total degree polynomials in $\bar\varepsilon$ survive inside $\mathbb E_{\bar\varepsilon}[\cdot\mid s_0]$.
Note
\[
\bar\varepsilon^\top W^K_{tp}\bar\varepsilon \text{ is degree }2,\quad
s_0^\top G_{tp}s_0 \text{ is degree }0,\quad
s_0^\top J_{tp}\bar\varepsilon \text{ is degree }1,\quad
\bar\varepsilon^\top U_{tp}\bar\varepsilon \text{ is degree }2,
\]
and among \eqref{eq:R2_expand_full},
\[
x^2 \text{ degree }0,\quad
z^2 \text{ degree }4,\quad
xz \text{ degree }2,\quad
y^2 \text{ degree }2,\quad
xy \text{ degree }1,\quad
yz \text{ degree }3.
\]
Therefore, for the \emph{even} part $x^2+z^2+2xz+4y^2$, the linear term
$s_0^\top J_{tp}\bar\varepsilon$ in \eqref{eq:stsp_full_expand} yields an odd integrand and vanishes, so
\[
s_t^\top s_p \;\to\; s_0^\top G_{tp}s_0+\bar\varepsilon^\top U_{tp}\bar\varepsilon
\qquad\text{when multiplying}\qquad (x^2+z^2+2xz+4y^2).
\]
For the \emph{odd} part $4xy+4yz$, only the linear term survives (odd$\times$odd becomes even), hence
\[
s_t^\top s_p \;\to\; s_0^\top J_{tp}\bar\varepsilon
\qquad\text{when multiplying}\qquad (4xy+4yz).
\]

\paragraph{Six-term decomposition ($K$-part).}
Substituting $\varepsilon_t^\top\Sigma^{-2}\varepsilon_p=\bar\varepsilon^\top W^K_{tp}\bar\varepsilon$ and using the parity reductions,
\eqref{eq:EGnorm_expand_tp} becomes
\[
\mathbb E\big[\|\widehat G_K\|_{\mathrm{F}}^2\big]
=\sum_{t,p=0}^{T-1}\big(E^{K}_{1,tp}+E^{K}_{2,tp}+E^{K}_{3,tp}+E^{K}_{4,tp}+E^{K}_{5,tp}+E^{K}_{6,tp}\big),
\]
where the \emph{even-part} contributions are
\begin{align*}
E^{K}_{1,tp}&:=\mathbb E \left[(\bar\varepsilon^\top W^K_{tp}\bar\varepsilon)\Big(s_0^\top G_{tp}s_0+\bar\varepsilon^\top U_{tp}\bar\varepsilon\Big)\,x^2\right],\\
E^{K}_{2,tp}&:=\mathbb E \left[(\bar\varepsilon^\top W^K_{tp}\bar\varepsilon)\Big(s_0^\top G_{tp}s_0+\bar\varepsilon^\top U_{tp}\bar\varepsilon\Big)\,z^2\right],\\
E^{K}_{3,tp}&:=2\,\mathbb E \left[(\bar\varepsilon^\top W^K_{tp}\bar\varepsilon)\Big(s_0^\top G_{tp}s_0+\bar\varepsilon^\top U_{tp}\bar\varepsilon\Big)\,x z\right],\\
E^{K}_{4,tp}&:=4\,\mathbb E \left[(\bar\varepsilon^\top W^K_{tp}\bar\varepsilon)\Big(s_0^\top G_{tp}s_0+\bar\varepsilon^\top U_{tp}\bar\varepsilon\Big)\,y^2\right],
\end{align*}
and the \emph{odd-part} contributions are
\begin{equation}\label{eq:EtpFtp_def}
E^{K}_{5,tp}:=4\,\mathbb E \left[(\bar\varepsilon^\top W^K_{tp}\bar\varepsilon)\,(s_0^\top J_{tp}\bar\varepsilon)\,x\,y\right],\qquad
E^{K}_{6,tp}:=4\,\mathbb E \left[(\bar\varepsilon^\top W^K_{tp}\bar\varepsilon)\,(s_0^\top J_{tp}\bar\varepsilon)\,y\,z\right].
\end{equation}

\noindent\textbf{Term $E^{K}_{1,tp}$ ($x^2$-part).}
Since $x$ depends only on $s_0$, we can factor expectations:
\begin{align}\label{eq:E1tp_final}
E^{K}_{1,tp}
&=\mathbb E \left[(\bar\varepsilon^\top W^K_{tp}\bar\varepsilon)(s_0^\top G_{tp}s_0)\,x^2\right]
+\mathbb E \left[(\bar\varepsilon^\top W^K_{tp}\bar\varepsilon)(\bar\varepsilon^\top U_{tp}\bar\varepsilon)\,x^2\right] \nonumber \\
&=\mathrm{IS}_{\Sigma_0}\big(\symfun(G_{tp}),\symfun(\overline M_{ss}),\symfun(\overline M_{ss})\big)\;
  \mathrm{IS}_{\overline\Sigma}\big(\symfun(W^K_{tp})\big) \nonumber \\
&\quad+
  \mathrm{IS}_{\Sigma_0} \big(\symfun(\overline M_{ss}),\symfun(\overline M_{ss})\big)\;
  \mathrm{IS}_{\overline\Sigma}\big(\symfun(W^K_{tp}),\symfun(U_{tp})\big).
\end{align}

\medskip
\noindent\textbf{Term $E^{K}_{2,tp}$ ($z^2$-part).}
\begin{align}\label{eq:E2tp_final}
E^{K}_{2,tp}
&=\mathrm{IS}_{\Sigma_0}\big(\symfun(G_{tp})\big)\;
  \mathrm{IS}_{\overline\Sigma}\big(\symfun(W^K_{tp}),\symfun(\overline M_{ee}),\symfun(\overline M_{ee})\big) \nonumber \\
&\quad+
  \mathrm{IS}_{\overline\Sigma}\big(\symfun(W^K_{tp}),\symfun(U_{tp}),\symfun(\overline M_{ee}),\symfun(\overline M_{ee})\big).
\end{align}

\medskip
\noindent\textbf{Term $E^{K}_{3,tp}$ ($2xz$-part).}
\begin{align}\label{eq:E3tp_final}
E^{K}_{3,tp}
&=2\,\mathrm{IS}_{\Sigma_0}\big(\symfun(G_{tp}),\symfun(\overline M_{ss})\big)\;
     \mathrm{IS}_{\overline\Sigma}\big(\symfun(W^K_{tp}),\symfun(\overline M_{ee})\big) \nonumber \\
&\quad+
  2\,\mathrm{IS}_{\Sigma_0}\big(\symfun(\overline M_{ss})\big)\;
     \mathrm{IS}_{\overline\Sigma}\big(\symfun(W^K_{tp}),\symfun(U_{tp}),\symfun(\overline M_{ee})\big).
\end{align}

\medskip
\noindent\textbf{Term $E^{K}_{4,tp}$ ($4y^2$-part).}
Since $y=s_0^\top\overline M_{se}\bar\varepsilon$, we have
\[
y^2=\bar\varepsilon^\top H(s_0)\bar\varepsilon,\qquad
H(s_0):=\overline M_{se}^\top s_0 s_0^\top\overline M_{se}.
\]
Then
\begin{align}
E^{K}_{4,tp}
&=4\,\mathbb E \left[(\bar\varepsilon^\top W^K_{tp}\bar\varepsilon)\Big(s_0^\top G_{tp}s_0+\bar\varepsilon^\top U_{tp}\bar\varepsilon\Big)\,(\bar\varepsilon^\top H(s_0)\bar\varepsilon)\right]\nonumber\\
&=4\,\mathbb E_{s_0} \Bigg[\;
(s_0^\top G_{tp}s_0)\underbrace{\mathbb E_{\bar\varepsilon}\left[(\bar\varepsilon^\top W^K_{tp}\bar\varepsilon)(\bar\varepsilon^\top H(s_0)\bar\varepsilon)\,\middle|\,s_0\right]}_{=:T_1(s_0)}
+\underbrace{\mathbb E_{\bar\varepsilon} \left[(\bar\varepsilon^\top W^K_{tp}\bar\varepsilon)(\bar\varepsilon^\top U_{tp}\bar\varepsilon)(\bar\varepsilon^\top H(s_0)\bar\varepsilon)\,\middle|\,s_0\right]}_{=:T_2(s_0)}
\;\Bigg].\label{eq:Dtp_cond_expand}
\end{align}
By Lemma~\ref{lem:IS_shorthand} (the $k=2$ and $k=3$ cases) with $\Omega=\overline\Sigma$,
\begin{align}
T_1(s_0)
&=\mathrm{IS}_{\overline\Sigma}\big(\symfun(W^K_{tp}),H(s_0)\big)\notag\\
&=\tr(\overline\Sigma\,\symfun(W^K_{tp}))\tr(\overline\Sigma\,H(s_0))
+2\,\tr(\overline\Sigma\,\symfun(W^K_{tp})\,\overline\Sigma\,H(s_0)),\label{eq:T1_expand}\\
T_2(s_0)
&=\mathrm{IS}_{\overline\Sigma}\big(\symfun(W^K_{tp}),\symfun(U_{tp}),H(s_0)\big)\notag\\
&=\tr(\overline\Sigma\,\symfun(W^K_{tp}))\tr(\overline\Sigma\,\symfun(U_{tp}))\tr(\overline\Sigma\,H(s_0))
+2\,\tr(\overline\Sigma\,\symfun(W^K_{tp})\,\overline\Sigma\,\symfun(U_{tp}))\tr(\overline\Sigma\,H(s_0))\notag\\
&\quad
+2\,\tr(\overline\Sigma\,\symfun(W^K_{tp})\,\overline\Sigma\,H(s_0))\tr(\overline\Sigma\,\symfun(U_{tp}))
+2\,\tr(\overline\Sigma\,\symfun(U_{tp})\,\overline\Sigma\,H(s_0))\tr(\overline\Sigma\,\symfun(W^K_{tp}))\notag\\
&\quad
+8\,\tr(\overline\Sigma\,\symfun(W^K_{tp})\,\overline\Sigma\,\symfun(U_{tp})\,\overline\Sigma\,H(s_0)).\label{eq:T2_expand}
\end{align}
Introduce $\alpha_{tp}:=\tr(\overline\Sigma\,\symfun(W^K_{tp}))$, $\beta_{tp}:=\tr(\overline\Sigma\,\symfun(U_{tp}))$,
$\delta_{tp}:=\tr(\overline\Sigma\,\symfun(W^K_{tp})\,\overline\Sigma\,\symfun(U_{tp}))$, and the matrices
\[
V_0:=\overline M_{se}\,\overline\Sigma\,\overline M_{se}^\top,\qquad
V_{tp}:=\overline M_{se}\,\overline\Sigma\,\symfun(W^K_{tp})\,\overline\Sigma\,\overline M_{se}^\top,
\]
\[
\widetilde V_{tp}:=\overline M_{se}\,\overline\Sigma\,\symfun(U_{tp})\,\overline\Sigma\,\overline M_{se}^\top,\qquad
\widehat V_{tp}:=\overline M_{se}\,\overline\Sigma\,\symfun(W^K_{tp})\,\overline\Sigma\,\symfun(U_{tp})\,\overline\Sigma\,\overline M_{se}^\top.
\]
Using $H(s_0)=\overline M_{se}^\top s_0 s_0^\top\overline M_{se}$ and cyclicity of trace, we obtain
\[
\tr(\overline\Sigma\,H(s_0))=s_0^\top V_0 s_0,\qquad
\tr(\overline\Sigma\,\symfun(W^K_{tp})\,\overline\Sigma\,H(s_0))=s_0^\top V_{tp} s_0,
\]
\[
\tr(\overline\Sigma\,\symfun(U_{tp})\,\overline\Sigma\,H(s_0))=s_0^\top \widetilde V_{tp} s_0,\qquad
\tr(\overline\Sigma\,\symfun(W^K_{tp})\,\overline\Sigma\,\symfun(U_{tp})\,\overline\Sigma\,H(s_0))=s_0^\top \widehat V_{tp} s_0.
\]
Substituting into \eqref{eq:T1_expand}--\eqref{eq:T2_expand} yields
\[
T_1(s_0)=\alpha_{tp}(s_0^\top V_0 s_0)+2(s_0^\top V_{tp}s_0),
\]
\[
T_2(s_0)
=(\alpha_{tp}\beta_{tp}+2\delta_{tp})(s_0^\top V_0 s_0)
+2\beta_{tp}(s_0^\top V_{tp}s_0)
+2\alpha_{tp}(s_0^\top \widetilde V_{tp}s_0)
+8(s_0^\top \widehat V_{tp}s_0).
\]
Taking $\mathbb E_{s_0}$ term-by-term and using Lemma~\ref{lem:IS_shorthand} with $\Omega=\Sigma_0$ gives
\begin{align}\label{eq:Dtp_final}
E^{K}_{4,tp}
&=
4\Big(
\alpha_{tp}\,\mathrm{IS}_{\Sigma_0}(\symfun(G_{tp}),V_0)
+2\,\mathrm{IS}_{\Sigma_0}(\symfun(G_{tp}),V_{tp})
+(\alpha_{tp}\beta_{tp}+2\delta_{tp})\,\mathrm{IS}_{\Sigma_0}(V_0) \nonumber\\
&+2\beta_{tp}\,\mathrm{IS}_{\Sigma_0}(V_{tp})
+2\alpha_{tp}\,\mathrm{IS}_{\Sigma_0}(\widetilde V_{tp})
+8\,\mathrm{IS}_{\Sigma_0} \big(\symfun(\widehat V_{tp})\big)
\Big).
\end{align}

\medskip
\noindent\textbf{Term $E^{K}_{5,tp}$ ($4xy$-part).}
Recall $y=s_0^\top\overline M_{se}\bar\varepsilon$. Define
\[
L_{tp}(s_0):=J_{tp}^\top s_0 s_0^\top \overline M_{se}\in\mathbb R^{mT\times mT}.
\]
Then
\[
(s_0^\top J_{tp}\bar\varepsilon)\,(s_0^\top \overline M_{se}\bar\varepsilon)
=\bar\varepsilon^\top L_{tp}(s_0)\bar\varepsilon,
\]
and since $x=s_0^\top\overline M_{ss}s_0$ depends only on $s_0$, conditioning on $s_0$ gives
\begin{align}
E^{K}_{5,tp}
&=4\,\mathbb E_{s_0} \left[
x\;\mathbb E_{\bar\varepsilon} \left[(\bar\varepsilon^\top W^K_{tp}\bar\varepsilon)\,(\bar\varepsilon^\top L_{tp}(s_0)\bar\varepsilon)\,\middle|\,s_0\right]\right]\nonumber\\
&=4\,\mathbb E_{s_0} \left[x\;\mathrm{IS}_{\overline\Sigma} \big(\symfun(W^K_{tp}),\symfun(L_{tp}(s_0))\big)\right].\label{eq:Etp_cond}
\end{align}
Introduce $\alpha_{tp}:=\tr(\overline\Sigma\,\symfun(W^K_{tp}))$ and define
\[
V^{(0)}_{tp}:=\symfun \Big(\overline M_{se}\,\overline\Sigma\,J_{tp}^\top\Big),\qquad
V^{(1)}_{tp}:=\symfun \Big(\overline M_{se}\,\overline\Sigma\,\symfun(W^K_{tp})\,\overline\Sigma\,J_{tp}^\top\Big).
\]
Then the same trace-to-quadratic-form yields
\begin{equation}\label{eq:Etp_final}
E^{K}_{5,tp}
=
4\Big(
\alpha_{tp}\,\mathrm{IS}_{\Sigma_0} \big(\symfun(\overline M_{ss}),V^{(0)}_{tp}\big)
+2\,\mathrm{IS}_{\Sigma_0} \big(\symfun(\overline M_{ss}),V^{(1)}_{tp}\big)
\Big).
\end{equation}

\medskip
\noindent\textbf{Term $E^{K}_{6,tp}$ ($4yz$-part).}
Using the same $L_{tp}(s_0)$ and $z=\bar\varepsilon^\top\overline M_{ee}\bar\varepsilon$, conditioning on $s_0$ gives
\begin{align}
E^{K}_{6,tp}
&=4\,\mathbb E_{s_0} \left[
\mathrm{IS}_{\overline\Sigma} \Big(\symfun(W^K_{tp}),\symfun(\overline M_{ee}),\symfun(L_{tp}(s_0))\Big)\right].
\label{eq:Ftp_cond}
\end{align}
Define
\[
\alpha_{tp}:=\tr(\overline\Sigma\,\symfun(W^K_{tp})),\qquad
\mu:=\tr(\overline\Sigma\,\symfun(\overline M_{ee})),\qquad
\kappa_{tp}:=\tr(\overline\Sigma\,\symfun(W^K_{tp})\,\overline\Sigma\,\symfun(\overline M_{ee})).
\]
With
\[
A^{(0)}:=\overline\Sigma,\quad
A^{(1)}:=\overline\Sigma\,\symfun(W^K_{tp})\,\overline\Sigma,\quad
A^{(2)}:=\overline\Sigma\,\symfun(\overline M_{ee})\,\overline\Sigma,\quad
A^{(3)}:=\symfun(\overline\Sigma\,\symfun(W^K_{tp})\,\overline\Sigma\,\symfun(\overline M_{ee})\,\overline\Sigma),
\]
and
\[
V^{(j)}_{tp}:=\symfun \Big(\overline M_{se}\,A^{(j)}\,J_{tp}^\top\Big)\quad (j=0,1,2,3),
\]
the trace-to-quadratic-form conversion gives
\begin{equation}\label{eq:Ftp_final}
E^{K}_{6,tp}
=
4\Big(
(\alpha_{tp}\mu+2\kappa_{tp})\,\mathrm{IS}_{\Sigma_0} \big(V^{(0)}_{tp}\big)
+2\mu\,\mathrm{IS}_{\Sigma_0} \big(V^{(1)}_{tp}\big)
+2\alpha_{tp}\,\mathrm{IS}_{\Sigma_0} \big(V^{(2)}_{tp}\big)
+8\,\mathrm{IS}_{\Sigma_0} \big(V^{(3)}_{tp}\big)
\Big).
\end{equation}


\paragraph{$\ell$-part: lifted representation of $g_t^\top g_p$.}
Introduce $\Pi_t$ such that $\varepsilon_t=\Pi_t\bar\varepsilon$.
Write $D:=\Sigma^{-1}=\mathrm{diag}(d_1,\dots,d_m)$.
Then
\[
g_t^\top g_p
=\big(D(\varepsilon_t\odot\varepsilon_t)-\mathbf 1\big)^\top\big(D(\varepsilon_p\odot\varepsilon_p)-\mathbf 1\big)
=(\varepsilon_t\odot\varepsilon_t)^\top D^2(\varepsilon_p\odot\varepsilon_p)
-\varepsilon_t^\top D\varepsilon_t-\varepsilon_p^\top D\varepsilon_p+m.
\]
To express the quartic term via quadratic forms, define for each $i\in\{1,\dots,m\}$
\[
Q_{t,i}:=\Pi_t^\top e_ie_i^\top \Pi_t\in\mathbb R^{mT\times mT}
\quad\Rightarrow\quad
\varepsilon_{t,i}^2=\bar\varepsilon^\top Q_{t,i}\bar\varepsilon,
\]
and define
\[
W^\ell_t:=\Pi_t^\top D \Pi_t\in\mathbb R^{mT\times mT}
\quad\Rightarrow\quad
\varepsilon_t^\top D\varepsilon_t=\bar\varepsilon^\top W^\ell_t\bar\varepsilon.
\]
Then
\begin{equation}\label{eq:gtgp_lifted}
g_t^\top g_p
=\sum_{i=1}^m d_i^2\,
(\bar\varepsilon^\top Q_{t,i}\bar\varepsilon)\,(\bar\varepsilon^\top Q_{p,i}\bar\varepsilon)
-(\bar\varepsilon^\top W^\ell_t\bar\varepsilon)-(\bar\varepsilon^\top W^\ell_p\bar\varepsilon)+m.
\end{equation}

\paragraph{Four-term decomposition ($\ell$-part).}
From \eqref{eq:EGell_even_only},
\[
\mathbb E\big[\|\widehat G_{\ell}\|_2^2\big]
=\sum_{t,p=0}^{T-1}\big(E^{\ell}_{1,tp}+E^{\ell}_{2,tp}+E^{\ell}_{3,tp}+E^{\ell}_{4,tp}\big),
\]
where
\[
E^{\ell}_{1,tp}:=\mathbb E[(g_t^\top g_p)x^2],\quad
E^{\ell}_{2,tp}:=\mathbb E[(g_t^\top g_p)z^2],\quad
E^{\ell}_{3,tp}:=2\,\mathbb E[(g_t^\top g_p)xz],\quad
E^{\ell}_{4,tp}:=4\,\mathbb E[(g_t^\top g_p)y^2].
\]

\medskip
\noindent\textbf{Term $E^{\ell}_{1,tp}$ ($x^2$-part).}
Since $x$ depends only on $s_0$ and $g_t^\top g_p$ depends only on $\bar\varepsilon$,
\[
E^{\ell}_{1,tp}=\mathbb E[x^2]\;\mathbb E[g_t^\top g_p]
=\mathrm{IS}_{\Sigma_0} \big(\symfun(\overline M_{ss}),\symfun(\overline M_{ss})\big)\;\mathbb E[g_t^\top g_p].
\]
Under $\overline\Sigma=I_T\otimes\Sigma$, $\{g_t\}$ are independent across time and $\mathbb E[g_t]=0$, hence
\[
\mathbb E[g_t^\top g_p]=\mathbb E[\|g_t\|_2^2]\mathbf 1_{\{t=p\}}=2m\,\mathbf 1_{\{t=p\}},
\]
so
\begin{equation}\label{eq:Atp_ell_final}
E^{\ell}_{1,tp}
=2m\,\mathbf 1_{\{t=p\}}\;\mathrm{IS}_{\Sigma_0} \big(\symfun(\overline M_{ss}),\symfun(\overline M_{ss})\big).
\end{equation}

\medskip
\noindent\textbf{Term $E^{\ell}_{2,tp}$ ($z^2$-part).}
Using \eqref{eq:gtgp_lifted} and $z=\bar\varepsilon^\top\overline M_{ee}\bar\varepsilon$,
\begin{align}
E^{\ell}_{2,tp}
&=\sum_{i=1}^m d_i^2\,
\mathrm{IS}_{\overline\Sigma} \big(\symfun(Q_{t,i}),\symfun(Q_{p,i}),\symfun(\overline M_{ee}),\symfun(\overline M_{ee})\big)\notag\\
&\quad-\mathrm{IS}_{\overline\Sigma} \big(\symfun(W^\ell_t),\symfun(\overline M_{ee}),\symfun(\overline M_{ee})\big)
-\mathrm{IS}_{\overline\Sigma} \big(\symfun(W^\ell_p),\symfun(\overline M_{ee}),\symfun(\overline M_{ee})\big)\notag\\
&\quad+m\,\mathrm{IS}_{\overline\Sigma} \big(\symfun(\overline M_{ee}),\symfun(\overline M_{ee})\big).
\label{eq:Btp_ell_final}
\end{align}

\medskip
\noindent\textbf{Term $E^{\ell}_{3,tp}$ ($2xz$-part).}
Since $x$ depends only on $s_0$,
\[
E^{\ell}_{3,tp}=2\,\mathbb E[x]\;\mathbb E[(g_t^\top g_p)z]
=2\,\mathrm{IS}_{\Sigma_0} \big(\symfun(\overline M_{ss})\big)\;\mathbb E[(g_t^\top g_p)z].
\]
By \eqref{eq:gtgp_lifted},
\begin{align}
\mathbb E[(g_t^\top g_p)z]
&=\sum_{i=1}^m d_i^2\,
\mathrm{IS}_{\overline\Sigma} \big(\symfun(Q_{t,i}),\symfun(Q_{p,i}),\symfun(\overline M_{ee})\big)\notag\\
&\quad-\mathrm{IS}_{\overline\Sigma} \big(\symfun(W^\ell_t),\symfun(\overline M_{ee})\big)
-\mathrm{IS}_{\overline\Sigma} \big(\symfun(W^\ell_p),\symfun(\overline M_{ee})\big)
+m\,\mathrm{IS}_{\overline\Sigma} \big(\symfun(\overline M_{ee})\big),
\end{align}
hence
\begin{align}\label{eq:Ctp_ell_final}
E^{\ell}_{3,tp}
&=2\,\mathrm{IS}_{\Sigma_0} \big(\symfun(\overline M_{ss})\big)\Big[
\sum_{i=1}^m d_i^2\,
\mathrm{IS}_{\overline\Sigma} \big(\symfun(Q_{t,i}),\symfun(Q_{p,i}),\symfun(\overline M_{ee})\big)
-\mathrm{IS}_{\overline\Sigma} \big(\symfun(W^\ell_t),\symfun(\overline M_{ee})\big)\notag\\
&\hspace{3.0cm}
-\mathrm{IS}_{\overline\Sigma} \big(\symfun(W^\ell_p),\symfun(\overline M_{ee})\big)
+m\,\mathrm{IS}_{\overline\Sigma} \big(\symfun(\overline M_{ee})\big)\Big].
\end{align}

\medskip
\noindent\textbf{Term $E^{\ell}_{4,tp}$ ($4y^2$-part).}
Recall $y=s_0^\top\overline M_{se}\bar\varepsilon$ and define
\[
H(s_0):=\overline M_{se}^\top s_0 s_0^\top\overline M_{se}
\quad\Rightarrow\quad
y^2=\bar\varepsilon^\top H(s_0)\bar\varepsilon .
\]
Conditioning on $s_0$ and using \eqref{eq:gtgp_lifted} yields
\begin{align}
E^{\ell}_{4,tp}
&=4\,\mathbb E_{s_0}\Big[
\sum_{i=1}^m d_i^2\,\mathrm{IS}_{\overline\Sigma} \big(\symfun(Q_{t,i}),\symfun(Q_{p,i}),H(s_0)\big)
-\mathrm{IS}_{\overline\Sigma} \big(\symfun(W^\ell_t),H(s_0)\big)\notag\\
&\hspace{3.6cm}
-\mathrm{IS}_{\overline\Sigma} \big(\symfun(W^\ell_p),H(s_0)\big)
+m\,\mathrm{IS}_{\overline\Sigma} \big(H(s_0)\big)\Big].
\label{eq:Dtp_ell_cond}
\end{align}

Introduce $c(s_0):=\tr(\overline\Sigma\,H(s_0))$. Using cyclicity of trace,
\begin{equation}\label{eq:cs0_V0}
c(s_0)=s_0^\top V_0 s_0,\qquad V_0:=\overline M_{se}\,\overline\Sigma\,\overline M_{se}^\top.
\end{equation}
Define also
\[
V_{t,i}:=\overline M_{se}\,\overline\Sigma\,\symfun(Q_{t,i})\,\overline\Sigma\,\overline M_{se}^\top,\qquad
V_{tp,i}:=\symfun \Big(\overline M_{se}\,\overline\Sigma\,\symfun(Q_{t,i})\,\overline\Sigma\,\symfun(Q_{p,i})\,\overline\Sigma\,\overline M_{se}^\top\Big),
\]
so that
\begin{equation}\label{eq:trace_to_quad_forms}
\tr(\overline\Sigma\,\symfun(Q_{t,i})\,\overline\Sigma\,H(s_0))=s_0^\top V_{t,i}s_0,\qquad
\tr(\overline\Sigma\,\symfun(Q_{t,i})\,\overline\Sigma\,\symfun(Q_{p,i})\,\overline\Sigma\,H(s_0))=s_0^\top V_{tp,i}s_0.
\end{equation}

\emph{Step 1: expand the $k=3$ term.}
By Lemma~\ref{lem:IS_shorthand} (the $k=3$ case) with $\Omega=\overline\Sigma$,
\begin{align}
\mathrm{IS}_{\overline\Sigma} \big(\symfun(Q_{t,i}),\symfun(Q_{p,i}),H(s_0)\big)
&=\tr(\overline\Sigma\,Q_{t,i})\,\tr(\overline\Sigma\,Q_{p,i})\,c(s_0)
+2\,\tr(\overline\Sigma\,Q_{t,i}\,\overline\Sigma\,Q_{p,i})\,c(s_0)\notag\\
&\quad
+2\,\tr(\overline\Sigma\,Q_{t,i}\,\overline\Sigma\,H(s_0))\,\tr(\overline\Sigma\,Q_{p,i})
+2\,\tr(\overline\Sigma\,Q_{p,i}\,\overline\Sigma\,H(s_0))\,\tr(\overline\Sigma\,Q_{t,i})\notag\\
&\quad
+8\,\tr(\overline\Sigma\,Q_{t,i}\,\overline\Sigma\,Q_{p,i}\,\overline\Sigma\,H(s_0)).
\label{eq:IS3_QQH_expand}
\end{align}
Under $\overline\Sigma=I_T\otimes\Sigma$,
\[
\tr(\overline\Sigma\,Q_{t,i})=\sigma_i^2,\qquad
\tr(\overline\Sigma\,Q_{t,i}\,\overline\Sigma\,Q_{p,i})=\mathbf 1_{\{t=p\}}\sigma_i^4.
\]
Multiplying by $d_i^2=\sigma_i^{-4}$ and using \eqref{eq:trace_to_quad_forms} gives
\begin{align}
d_i^2\,\mathrm{IS}_{\overline\Sigma} \big(Q_{t,i},Q_{p,i},H(s_0)\big)
&=c(s_0)+2\,\mathbf 1_{\{t=p\}}\,c(s_0)
+2d_i\,s_0^\top V_{t,i}s_0
+2d_i\,s_0^\top V_{p,i}s_0
+8d_i^2\,s_0^\top V_{tp,i}s_0.
\label{eq:di2_IS3_simplified}
\end{align}
Summing \eqref{eq:di2_IS3_simplified} over $i$ yields
\begin{align}
\sum_{i=1}^m d_i^2\,\mathrm{IS}_{\overline\Sigma} \big(\symfun(Q_{t,i}),\symfun(Q_{p,i}),H(s_0)\big)
&= m\,c(s_0)+2m\,\mathbf 1_{\{t=p\}}\,c(s_0)
+2\,s_0^\top\Big(\sum_{i=1}^m d_i V_{t,i}\Big)s_0
+2\,s_0^\top\Big(\sum_{i=1}^m d_i V_{p,i}\Big)s_0\notag\\
&\quad
+8\,s_0^\top\Big(\sum_{i=1}^m d_i^2 V_{tp,i}\Big)s_0.
\label{eq:sum_di2_IS3}
\end{align}

\emph{Step 2: expand the $k=2$ terms.}
By Lemma~\ref{lem:IS_shorthand} (the $k=2$ case),
\begin{equation}\label{eq:IS2_WH}
\mathrm{IS}_{\overline\Sigma} \big(W^\ell_t,H(s_0)\big)
=\tr(\overline\Sigma\,W^\ell_t)\,c(s_0)+2\,\tr(\overline\Sigma\,W^\ell_t\,\overline\Sigma\,H(s_0)).
\end{equation}
Under $\overline\Sigma=I_T\otimes\Sigma$ and $W^\ell_t=\Pi_t^\top\Sigma^{-1}\Pi_t$,
\[
\tr(\overline\Sigma\,W^\ell_t)=\tr(\Sigma\,\Sigma^{-1})=m,
\]
and
\begin{equation}\label{eq:trace_WtH_to_quad}
\tr(\overline\Sigma\,W^\ell_t\,\overline\Sigma\,H(s_0))
=s_0^\top\Big(\overline M_{se}\,\overline\Sigma\,W^\ell_t\,\overline\Sigma\,\overline M_{se}^\top\Big)s_0.
\end{equation}
Thus
\begin{equation}\label{eq:IS2_WH_simplified}
\mathrm{IS}_{\overline\Sigma} \big(W^\ell_t,H(s_0)\big)
=m\,c(s_0)+2\,s_0^\top\Big(\overline M_{se}\,\overline\Sigma\,W^\ell_t\,\overline\Sigma\,\overline M_{se}^\top\Big)s_0,
\end{equation}
and similarly for $p$.

\emph{Step 3: cancellation via $\sum_i d_i Q_{t,i}=W^\ell_t$.}
Note that
\[
\sum_{i=1}^m d_i\,Q_{t,i}
=\Pi_t^\top\Big(\sum_{i=1}^m d_i e_ie_i^\top\Big)\Pi_t
=\Pi_t^\top \Sigma^{-1} \Pi_t
=W^\ell_t.
\]
Therefore
\begin{equation}\label{eq:key_identity_sumdiVti}
\sum_{i=1}^m d_i\,V_{t,i}
=\overline M_{se}\,\overline\Sigma\,W^\ell_t\,\overline\Sigma\,\overline M_{se}^\top,
\qquad
\sum_{i=1}^m d_i\,V_{p,i}
=\overline M_{se}\,\overline\Sigma\,W^\ell_p\,\overline\Sigma\,\overline M_{se}^\top.
\end{equation}

\emph{Step 4: assemble \eqref{eq:Dtp_ell_cond}.}
Substituting \eqref{eq:sum_di2_IS3}, \eqref{eq:IS2_WH_simplified}, and $\mathrm{IS}_{\overline\Sigma}(H(s_0))=c(s_0)$,
and using the cancellation \eqref{eq:key_identity_sumdiVti}, the bracket in \eqref{eq:Dtp_ell_cond} reduces to
\[
2m\,\mathbf 1_{\{t=p\}}\,c(s_0)+8\,s_0^\top\Big(\sum_{i=1}^m d_i^2 V_{tp,i}\Big)s_0.
\]
Hence
\[
E^{\ell}_{4,tp}
=4\,\mathbb E_{s_0} \left[2m\,\mathbf 1_{\{t=p\}}\,c(s_0)+8\,s_0^\top\Big(\sum_{i=1}^m d_i^2 V_{tp,i}\Big)s_0\right].
\]
Using Lemma~\ref{lem:IS_shorthand} (the $k=1$ case) with $\Omega=\Sigma_0$ yields
\begin{equation}\label{eq:Dtp_ell_final}
E^{\ell}_{4,tp}
=
8m\,\mathbf 1_{\{t=p\}}\;\mathrm{IS}_{\Sigma_0}(V_0)
\;+\;
32\sum_{i=1}^m d_i^2\,\mathrm{IS}_{\Sigma_0} \big(V_{tp,i}\big).
\end{equation}
Moreover, under $\overline\Sigma=I_T\otimes\Sigma$ the matrices $Q_{t,i}$ and $Q_{p,i}$ occupy disjoint time blocks when
$t\neq p$, which implies $V_{tp,i}=0$ for $t\neq p$.

\paragraph{Conclusion.}
The $K$-part decomposes as
\[
\mathbb E\big[\|\widehat G_K\|_{\mathrm{F}}^2\big]
=\sum_{t,p=0}^{T-1}\big(E^{K}_{1,tp}+E^{K}_{2,tp}+E^{K}_{3,tp}+E^{K}_{4,tp}+E^{K}_{5,tp}+E^{K}_{6,tp}\big),
\]
with  $E^{K}_{i,tp}, i = 1,...,6$ given by \eqref{eq:E1tp_final}, \eqref{eq:E2tp_final}, \eqref{eq:E3tp_final}, \eqref{eq:Dtp_final}, \eqref{eq:Etp_final}, \eqref{eq:Ftp_final},
The $\ell$-part decomposes as
\[
\mathbb E\big[\|\widehat G_{\ell}\|_2^2\big]
=\sum_{t,p=0}^{T-1}\big(E^{\ell}_{1,tp}+E^{\ell}_{2,tp}+E^{\ell}_{3,tp}+E^{\ell}_{4,tp}\big),
\]
with $E^{\ell}_{1,tp}$, $E^{\ell}_{2,tp}$, $E^{\ell}_{3,tp}$, $E^{\ell}_{4,tp}$ given by
\eqref{eq:Atp_ell_final}, \eqref{eq:Btp_ell_final}, \eqref{eq:Ctp_ell_final}, \eqref{eq:Dtp_ell_final}.
This completes the proof.
\end{proof}

\section{Proof of Corollary~\ref{cor:lqg_stationary}}
\label{sec:app-proof-lqg-stationary}

\begin{proof}
From Theorem~\ref{thm:lqr-variance-T-compose}, the mean gradient with respect to $\ell$ is
\[
\nabla_{\ell} J_T
=
-2\,\mathrm{diag} \Big(\Sigma\sum_{t=0}^{T-1}\gamma^t\big( Q_a + \gamma B^\top \Lambda_{t+1} B \big)\Big).
\]
Define
\[
M := \sum_{t=0}^{T-1}\gamma^t\big( Q_a + \gamma B^\top \Lambda_{t+1} B \big).
\]
Then $\nabla_\ell J_T=0$ is equivalent to $\mathrm{diag}(\Sigma M)=0$.

As $\Sigma=\mathrm{diag}(\sigma^2)$ is diagonal, for each $i\in[m]$,
\[
0=(\Sigma M)_{ii}=\Sigma_{ii}M_{ii}.
\]
Since $Q_a\succ 0$ and $\Lambda_{t+1}\succeq 0$, we have
$Q_a+\gamma B^\top \Lambda_{t+1}B \succ 0$ for every $t$, hence $M\succ 0$ and in particular
$M_{ii}>0$ for all $i$. Therefore $\Sigma_{ii}=0$ for all $i$, i.e.\ $\Sigma=\mathbf 0$.

Conversely, if $\Sigma=\mathbf 0$, then $\nabla_\ell J_T=0$ follows immediately from the formula above.
\end{proof}

\section{Proof of Theorem \ref{thm:lqr-variance-T-bound}}
\label{sec:app-proof-lqr-variance-T-bound}

\begin{proof}
   Recall the policy gradient estimators
\[
\widehat G_K
=\sum_{t=0}^{T-1}\Sigma^{-1}\varepsilon_t s_t^\top\,R(\tau),
\qquad
\widehat G_{\ell}
=\sum_{t=0}^{T-1}\Big(\Sigma^{-1}(\varepsilon_t\odot \varepsilon_t)-\mathbf 1\Big)\,R(\tau),
\]
and the stacked noise $\bar\varepsilon:=(\varepsilon_0^\top,\dots,\varepsilon_{T-1}^\top)^\top\sim\mathcal N(0,\overline\Sigma)$
with $\overline\Sigma:=I_T\otimes\Sigma$.
   
   Using Cauchy’s inequality on each Frobenius entry, let $\eta_t = \Sigma^{-1}\varepsilon_t$.
   \begin{align*}
   \|\widehat G_K\|_{\mathrm{F}}^2
   &=\Big\|\sum_{t=0}^{T-1}\Sigma^{-1}\varepsilon_t s_t^\top\Big\|_{\mathrm{F}}^2\,R(\tau)^2
   = \sum_{i,j}\Big(\sum_{t=0}^{T-1} \eta_{t,i}s_{t,j}\Big)^2 R(\tau)^2 \\
   &\le \sum_{i,j}
      \Big(\sum_{t=0}^{T-1} \eta_{t,i}^2\Big)
      \Big(\sum_{t=0}^{T-1} s_{t,j}^2\Big) R(\tau)^2
   = \Big(\sum_{t=0}^{T-1} \|\eta_t\|_2^2\Big)
      \Big(\sum_{t=0}^{T-1} \|s_t\|_2^2\Big) R(\tau)^2.
   \end{align*}
   Using stacked $\bar\varepsilon$ and $\bar s$ we obtain
   \begin{equation}
   \label{eq:second-moment-global}
   \VarF(\widehat G_K) \le \mathbb E\big[\|\widehat G_K\|_{\mathrm{F}}^2\big]
   \;\le\;
   \mathbb E\big[\|\overline\Sigma^{-1}\bar\varepsilon\|_2^2\;\|\bar s\|_2^2\;R(\tau)^2\big].
   \end{equation}

   By AM-GM inequality, we have the decomposition
   \begin{equation}
      \VarF(\widehat G_K)
      \;\le\;2\,\underbrace{\mathbb E\!\big[\|\overline\Sigma^{-1}\bar\varepsilon\|_2^2\,
                                 \|\mathcal F_S s_0\|_2^2\,R(\tau)^{2}\big]}_{(\star)}
      \;+\;2\,\underbrace{\mathbb E\!\big[\|\overline\Sigma^{-1}\bar\varepsilon\|_2^2\,
                                 \|\mathcal F_E\bar\varepsilon\|_2^2\,R(\tau)^{2}\big]}_{(\dagger)} .
    \end{equation}

   For the $\ell$ part, define the per-step score term
\[
g_t \;:=\;\Sigma^{-1}(\varepsilon_t\odot \varepsilon_t)-\mathbf 1\in\R^m,
\qquad
\widehat G_\ell \;=\;\sum_{t=0}^{T-1} g_t\,R(\tau).
\]
By Cauchy--Schwarz (or $\|\sum_{t=0}^{T-1}v_t\|_2^2\le T\sum_{t=0}^{T-1}\|v_t\|_2^2$),
\begin{equation}\label{eq:ell-second-moment-global}
\mathbb E\big[\|\widehat G_\ell\|_2^2\big]
\;\le\;
T\,\mathbb E\big[\sum_{t=0}^{T-1}\|g_t\|_2^2\,R(\tau)^2\big].
\end{equation}

We next bound $\sum_{t=0}^{T-1}\|g_t\|_2^2$ by a polynomial in $\bar\varepsilon$.
Since $\Sigma$ is diagonal, write $\varepsilon_t=\Sigma^{1/2}\xi_t$ with $\xi_t\sim\mathcal N(0,I_m)$ i.i.d.,
so that
\[
g_t=\Sigma^{-1}(\varepsilon_t\odot \varepsilon_t)-\mathbf 1
= (\xi_t\odot\xi_t)-\mathbf 1,
\qquad
\|g_t\|_2^2=\sum_{i=1}^m(\xi_{t,i}^2-1)^2.
\]
Using $(a-1)^2\le 2a^2+2$ with $a=\xi_{t,i}^2\ge 0$ gives
\[
(\xi_{t,i}^2-1)^2 \le 2\xi_{t,i}^4+2
\quad\Rightarrow\quad
\|g_t\|_2^2 \le 2\sum_{i=1}^m \xi_{t,i}^4 + 2m
\le 2\|\xi_t\|_2^4 + 2m.
\]
Summing over $t$ yields
\[
\sum_{t=0}^{T-1}\|g_t\|_2^2
\le 2\sum_{t=0}^{T-1}\|\xi_t\|_2^4 + 2mT.
\]
Using $\|\xi_t\|_2^2\ge 0$, we have 
$
\sum_{t=0}^{T-1}\|\xi_t\|_2^4
\le \Big(\sum_{t=0}^{T-1}\|\xi_t\|_2^2\Big)^2
=\|\bar\xi\|_2^4.
$

Finally, since $\bar\varepsilon=(I_T\otimes\Sigma^{1/2})\bar\xi$ and $\overline\Sigma=I_T\otimes\Sigma$,
we have $\overline\Sigma^{-1/2}\bar\varepsilon=\bar\xi$, so $\|\bar\xi\|_2^4=\|\overline\Sigma^{-1/2}\bar\varepsilon\|_2^4$.
Therefore,
\[
\sum_{t=0}^{T-1}\|g_t\|_2^2
\le 2\|\overline\Sigma^{-1/2}\bar\varepsilon\|_2^4 + 2mT.
\]
Plugging into \eqref{eq:ell-second-moment-global} gives
\[
\mathbb E\big[\|\widehat G_\ell\|_2^2\big]
\le
T\,\mathbb E\!\Big[\big(2\|\overline\Sigma^{-1/2}\bar\varepsilon\|_2^4 + 2mT\big)\,R(\tau)^2\Big].
\]

Assume the standard multi-step quadratic decomposition of the return
\[
R(\tau)=-(x+2y+z),
\qquad
x:=s_0^\top \overline M_{ss}s_0,\quad
y:=s_0^\top \overline M_{se}\bar\varepsilon,\quad
z:=\bar\varepsilon^\top \overline M_{ee}\bar\varepsilon,
\]
\paragraph{Proof of $K$-part expansion.}
Define
\[
S:=\mathcal F_S^\top\mathcal F_S,\qquad
E:=\mathcal F_E^\top\mathcal F_E,\qquad
\]

\paragraph{Step 1: expand $(\star)$.}
Since $\|\mathcal F_S s_0\|_2^2=s_0^\top S s_0$ and $R(\tau)=-(x+2y+z)$,
\[
(\star)
=
\E\!\left[(\bar\varepsilon^\top  \overline\Sigma^{-2}\bar\varepsilon)\,(s_0^\top S s_0)\,(x+2y+z)^2\right].
\]
Expanding $(x+2y+z)^2=x^2+z^2+2xz+4y^2+4xy+4yz$ and using independence
$s_0\perp\!\!\!\perp\bar\varepsilon$ and odd-moment cancellation in $\bar\varepsilon$
(which removes the $xy$ and $yz$ terms), we get
\[
(\star)=E^{(\star)}_{1,K}+E^{(\star)}_{2,K}+E^{(\star)}_{3,K}+E^{(\star)}_{4,K},
\]
with
\begin{align*}
E^{(\star)}_{1,K}
&:= \E[\bar\varepsilon^\top  \overline\Sigma^{-2}\bar\varepsilon]\;\E\!\left[(s_0^\top S s_0)\,x^2\right],\\
E^{(\star)}_{2,K}
&:= \E[s_0^\top S s_0]\;\E\!\left[(\bar\varepsilon^\top  \overline\Sigma^{-2}\bar\varepsilon)\,z^2\right],\\
E^{(\star)}_{3,K}
&:= 2\,\E\!\left[(s_0^\top S s_0)\,x\right]\;\E\!\left[(\bar\varepsilon^\top  \overline\Sigma^{-2}\bar\varepsilon)\,z\right],\\
E^{(\star)}_{4,K}
&:= 4\,\E\!\left[(s_0^\top S s_0)\;
\E\!\left[(\bar\varepsilon^\top  \overline\Sigma^{-2}\bar\varepsilon)\,y^2\,\middle|\,s_0\right]\right].
\end{align*}

Using Lemma~\ref{lem:IS_shorthand},
\[
\E[\bar\varepsilon^\top  \overline\Sigma^{-2}\bar\varepsilon]=\mathrm{IS}_{\overline\Sigma}( \overline\Sigma^{-2}),
\qquad
\E[s_0^\top S s_0]=\mathrm{IS}_{\Sigma_0}(S),
\]
\[
\E\!\left[(s_0^\top S s_0)x\right]=\mathrm{IS}_{\Sigma_0}(S,\overline M_{ss}),
\qquad
\E\!\left[(s_0^\top S s_0)x^2\right]=\mathrm{IS}_{\Sigma_0}(S,\overline M_{ss},\overline M_{ss}),
\]
and since $z=\bar\varepsilon^\top \overline M_{ee}\bar\varepsilon$,
\[
\E\!\left[(\bar\varepsilon^\top  \overline\Sigma^{-2}\bar\varepsilon)z\right]
=\mathrm{IS}_{\overline\Sigma}( \overline\Sigma^{-2},\overline M_{ee}),
\qquad
\E\!\left[(\bar\varepsilon^\top  \overline\Sigma^{-2}\bar\varepsilon)z^2\right]
=\mathrm{IS}_{\overline\Sigma}( \overline\Sigma^{-2},\overline M_{ee},\overline M_{ee}).
\]
For $E^{(\star)}_{4,K}$, note $y=s_0^\top \overline M_{se}\bar\varepsilon$ implies
\[
y^2=\bar\varepsilon^\top X(s_0)\bar\varepsilon,
\qquad
X(s_0):=\overline M_{se}^\top s_0 s_0^\top \overline M_{se}.
\]
Conditioning on $s_0$ and applying Lemma~\ref{lem:IS_shorthand} with $k=2$,
\[
\E_{\bar\varepsilon}\!\left[(\bar\varepsilon^\top  \overline\Sigma^{-2}\bar\varepsilon)y^2\mid s_0\right]
=\mathrm{IS}_{\overline\Sigma}\!\big( \overline\Sigma^{-2},X(s_0)\big).
\]
Define
\[
U:=\overline M_{se}\overline M_{se}^\top,
\qquad
V:=\overline M_{se}\overline\Sigma \overline M_{se}^\top.
\]
Then $\tr(X(s_0))=s_0^\top U s_0$ and $\tr(\overline\Sigma X(s_0))=s_0^\top V s_0$, and since
$\overline\Sigma  \overline\Sigma^{-2}\overline\Sigma=I$, we obtain
\[
\mathrm{IS}_{\overline\Sigma}\!\big( \overline\Sigma^{-2},X(s_0)\big)
=
\mathrm{IS}_{\overline\Sigma}( \overline\Sigma^{-2})\,(s_0^\top V s_0)+2\,(s_0^\top U s_0).
\]
Therefore,
\[
E^{(\star)}_{4,K}
=
4\Big(
\mathrm{IS}_{\overline\Sigma}( \overline\Sigma^{-2})\;\mathrm{IS}_{\Sigma_0}(S,V)
+
2\,\mathrm{IS}_{\Sigma_0}(S,U)
\Big).
\]
Altogether,
\begin{equation}\label{eq:star-final}
\begin{aligned}
(\star)
&=
\mathrm{IS}_{\overline\Sigma}( \overline\Sigma^{-2})\;\mathrm{IS}_{\Sigma_0}(S,\overline M_{ss},\overline M_{ss})
+\mathrm{IS}_{\Sigma_0}(S)\;\mathrm{IS}_{\overline\Sigma}( \overline\Sigma^{-2},\overline M_{ee},\overline M_{ee})\\
&\quad
+2\,\mathrm{IS}_{\Sigma_0}(S,\overline M_{ss})\;\mathrm{IS}_{\overline\Sigma}( \overline\Sigma^{-2},\overline M_{ee})
+4\Big(
\mathrm{IS}_{\overline\Sigma}( \overline\Sigma^{-2})\;\mathrm{IS}_{\Sigma_0}(S,V)
+2\,\mathrm{IS}_{\Sigma_0}(S,U)
\Big).
\end{aligned}
\end{equation}

\paragraph{Step 2: expand $(\dagger)$.}
Since $\|\mathcal F_E\bar\varepsilon\|_2^2=\bar\varepsilon^\top E\bar\varepsilon$,
\[
(\dagger)
=
\E\!\left[(\bar\varepsilon^\top  \overline\Sigma^{-2}\bar\varepsilon)\,(\bar\varepsilon^\top E\bar\varepsilon)\,(x+2y+z)^2\right].
\]
As before, odd-moment cancellation removes $xy$ and $yz$, yielding
\[
(\dagger)=E^{(\dagger)}_{1,K}+E^{(\dagger)}_{2,K}+E^{(\dagger)}_{3,K}+E^{(\dagger)}_{4,K},
\]
with
\begin{align*}
E^{(\dagger)}_{1,K}
&:= \E\!\left[(\bar\varepsilon^\top  \overline\Sigma^{-2}\bar\varepsilon)(\bar\varepsilon^\top E\bar\varepsilon)\right]\;\E[x^2],\\
E^{(\dagger)}_{2,K}
&:= \E\!\left[(\bar\varepsilon^\top  \overline\Sigma^{-2}\bar\varepsilon)(\bar\varepsilon^\top E\bar\varepsilon)\,z^2\right],\\
E^{(\dagger)}_{3,K}
&:= 2\,\E[x]\;\E\!\left[(\bar\varepsilon^\top  \overline\Sigma^{-2}\bar\varepsilon)(\bar\varepsilon^\top E\bar\varepsilon)\,z\right],\\
E^{(\dagger)}_{4,K}
&:= 4\,\E\!\left[\E\!\left[(\bar\varepsilon^\top  \overline\Sigma^{-2}\bar\varepsilon)(\bar\varepsilon^\top E\bar\varepsilon)\,y^2\,\middle|\,s_0\right]\right].
\end{align*}
Rewrite each term:
\[
\E[x]=\mathrm{IS}_{\Sigma_0}(\overline M_{ss}),
\qquad
\E[x^2]=\mathrm{IS}_{\Sigma_0}(\overline M_{ss},\overline M_{ss}),
\]
\[
\E\!\left[(\bar\varepsilon^\top  \overline\Sigma^{-2}\bar\varepsilon)(\bar\varepsilon^\top E\bar\varepsilon)\right]
=\mathrm{IS}_{\overline\Sigma}( \overline\Sigma^{-2},E),
\]
\[
\E\!\left[(\bar\varepsilon^\top  \overline\Sigma^{-2}\bar\varepsilon)(\bar\varepsilon^\top E\bar\varepsilon)\,z\right]
=\mathrm{IS}_{\overline\Sigma}( \overline\Sigma^{-2},E,\overline M_{ee}),
\qquad
\E\!\left[(\bar\varepsilon^\top  \overline\Sigma^{-2}\bar\varepsilon)(\bar\varepsilon^\top E\bar\varepsilon)\,z^2\right]
=\mathrm{IS}_{\overline\Sigma}( \overline\Sigma^{-2},E,\overline M_{ee},\overline M_{ee}).
\]
For the mixed term, with $y^2=\bar\varepsilon^\top X(s_0)\bar\varepsilon$,
\[
\E_{\bar\varepsilon}\!\left[(\bar\varepsilon^\top  \overline\Sigma^{-2}\bar\varepsilon)(\bar\varepsilon^\top E\bar\varepsilon)\,y^2\mid s_0\right]
=\mathrm{IS}_{\overline\Sigma}\!\big( \overline\Sigma^{-2},E,X(s_0)\big),
\]
For the mixed term, recall $X(s_0)=\overline M_{se}^\top s_0 s_0^\top \overline M_{se}$ and define
\[
U:=\overline M_{se}\overline M_{se}^\top,\qquad
V:=\overline M_{se}\overline\Sigma\,\overline M_{se}^\top,\qquad
W:=\overline M_{se}\,\overline\Sigma\,E\,\overline\Sigma\,\overline M_{se}^\top,\qquad
Z:=\symfun(\overline M_{se}\,E\,\overline\Sigma\,\overline M_{se}^\top).
\]
First, by Lemma~\ref{lem:IS_shorthand} ($k=3$) with $\Omega=\overline\Sigma$, $A=\overline\Sigma^{-2}$, $B=E$, $C=X(s_0)$,
\begin{align*}
\mathrm{IS}_{\overline\Sigma}(\overline\Sigma^{-2},E,X(s_0))
&=\tr(\overline\Sigma^{-1})\tr(\overline\Sigma E)\tr(\overline\Sigma X(s_0))
+2\tr(E)\tr(\overline\Sigma X(s_0))
+2\tr(\overline\Sigma E)\tr(X(s_0))\\
&\quad+2\tr(\overline\Sigma^{-1})\tr(\overline\Sigma E\overline\Sigma X(s_0))
+8\tr(E\overline\Sigma X(s_0)),
\end{align*}
and using $\overline\Sigma\,\overline\Sigma^{-2}\,\overline\Sigma=I$ together with
$\tr(\overline\Sigma X(s_0))=s_0^\top V s_0$, $\tr(X(s_0))=s_0^\top U s_0$,
$\tr(\overline\Sigma E\overline\Sigma X(s_0))=s_0^\top W s_0$, and $\tr(E\overline\Sigma X(s_0))=s_0^\top Z s_0$,
we obtain
\[
\mathrm{IS}_{\overline\Sigma}(\overline\Sigma^{-2},E,X(s_0))
=
\mathrm{IS}_{\overline\Sigma}(\overline\Sigma^{-2},E)\,(s_0^\top V s_0)
+2\,\mathrm{IS}_{\overline\Sigma}(E)\,(s_0^\top U s_0)
+2\,\mathrm{IS}_{\overline\Sigma}(\overline\Sigma^{-2})\,(s_0^\top W s_0)
+8\,(s_0^\top Z s_0).
\]

Taking expectation over $s_0\sim\mathcal N(0,\Sigma_0)$ gives
\[
\E_{s_0}\!\left[\mathrm{IS}_{\overline\Sigma}(\overline\Sigma^{-2},E,X(s_0))\right]
=
\mathrm{IS}_{\overline\Sigma}(\overline\Sigma^{-2},E)\,\mathrm{IS}_{\Sigma_0}(V)
+2\,\mathrm{IS}_{\overline\Sigma}(E)\,\mathrm{IS}_{\Sigma_0}(U)
+2\,\mathrm{IS}_{\overline\Sigma}(\overline\Sigma^{-2})\,\mathrm{IS}_{\Sigma_0}(W)
+8\,\mathrm{IS}_{\Sigma_0}(Z).
\]
Consequently,
\[
E^{(\dagger)}_{4,K}
=
4\,\mathrm{IS}_{\overline\Sigma}(\overline\Sigma^{-2},E)\,\mathrm{IS}_{\Sigma_0}(V)
+8\,\mathrm{IS}_{\overline\Sigma}(E)\,\mathrm{IS}_{\Sigma_0}(U)
+8\,\mathrm{IS}_{\overline\Sigma}(\overline\Sigma^{-2})\,\mathrm{IS}_{\Sigma_0}(W)
+32\,\mathrm{IS}_{\Sigma_0}(Z).
\]

hence
\begin{equation}\label{eq:dagger-final}
\begin{aligned}
(\dagger)
&=
\mathrm{IS}_{\overline\Sigma}( \overline\Sigma^{-2},E)\;\mathrm{IS}_{\Sigma_0}(\overline M_{ss},\overline M_{ss})
+\mathrm{IS}_{\overline\Sigma}( \overline\Sigma^{-2},E,\overline M_{ee},\overline M_{ee})\\
&\quad
+2\,\mathrm{IS}_{\Sigma_0}(\overline M_{ss})\;\mathrm{IS}_{\overline\Sigma}( \overline\Sigma^{-2},E,\overline M_{ee})
+4\,\E_{s_0}\!\left[\mathrm{IS}_{\overline\Sigma}\!\big( \overline\Sigma^{-2},E,X(s_0)\big)\right].
\end{aligned}
\end{equation}

Combining \eqref{eq:star-final}, and \eqref{eq:dagger-final} yields the desired
multi-step expansion of $\E[\|\widehat G_K\|_{\mathrm{F}}^2]$.

\paragraph{Proof of $\ell$-part expansion.}
Then
\[
\|\overline\Sigma^{-1/2}\bar\varepsilon\|_2^4
=
(\bar\varepsilon^\top \overline\Sigma^{-1} \bar\varepsilon)^2,
\qquad
R(\tau)^2=(x+2y+z)^2.
\]
As above, odd-moment cancellation removes $xy$ and $yz$, so
\[
R(\tau)^2=x^2+z^2+2xz+4y^2.
\]
Therefore
\[
\E\!\left[(2\|\overline\Sigma^{-1/2}\bar\varepsilon\|_2^4+2mT)\,R(\tau)^2\right]
=
2\sum_{i=1}^4 A_i + 2mT\sum_{i=1}^4 B_i,
\]
where
\begin{align*}
A_1 &:= \E\!\left[(\bar\varepsilon^\top \overline\Sigma^{-1}\bar\varepsilon)^2\,x^2\right]
     = \E[x^2]\;\E\!\left[(\bar\varepsilon^\top \overline\Sigma^{-1}\bar\varepsilon)^2\right],\\
A_2 &:= \E\!\left[(\bar\varepsilon^\top \overline\Sigma^{-1}\bar\varepsilon)^2\,z^2\right],\\
A_3 &:= 2\,\E\!\left[(\bar\varepsilon^\top \overline\Sigma^{-1}\bar\varepsilon)^2\,x z\right]
     = 2\,\E[x]\;\E\!\left[(\bar\varepsilon^\top \overline\Sigma^{-1}\bar\varepsilon)^2\,z\right],\\
A_4 &:= 4\,\E\!\left[(\bar\varepsilon^\top \overline\Sigma^{-1}\bar\varepsilon)^2\,y^2\right]
     = 4\,\E_{s_0}\!\left[\mathrm{IS}_{\overline\Sigma}(\overline\Sigma^{-1},\overline\Sigma^{-1},X(s_0))\right],
\end{align*}
and
\begin{align*}
B_1 &:= \E[x^2],\qquad
B_2 := \E[z^2],\qquad
B_3 := 2\,\E[xz]=2\,\E[x]\E[z],\qquad
B_4 := 4\,\E[y^2].
\end{align*}

Each term can be written using $\mathrm{IS}$ shorthand:
\[
\E[x]=\mathrm{IS}_{\Sigma_0}(\overline M_{ss}),
\qquad
\E[x^2]=\mathrm{IS}_{\Sigma_0}(\overline M_{ss},\overline M_{ss}),
\]
\[
\E[z]=\mathrm{IS}_{\overline\Sigma}(\overline M_{ee}),
\qquad
\E[z^2]=\mathrm{IS}_{\overline\Sigma}(\overline M_{ee},\overline M_{ee}),
\]
\[
\E[y^2]=\mathrm{IS}_{\Sigma_0}(V),
\quad\text{where }V:=\overline M_{se}\overline\Sigma \overline M_{se}^\top.
\]
Moreover,
\[
\E\!\left[(\bar\varepsilon^\top \overline\Sigma^{-1}\bar\varepsilon)^2\right]=\mathrm{IS}_{\overline\Sigma}(\overline\Sigma^{-1},\overline\Sigma^{-1}),
\]
\[
\E\!\left[(\bar\varepsilon^\top \overline\Sigma^{-1}\bar\varepsilon)^2\,z\right]
=\mathrm{IS}_{\overline\Sigma}(\overline\Sigma^{-1},\overline\Sigma^{-1},\overline M_{ee}),
\qquad
\E\!\left[(\bar\varepsilon^\top \overline\Sigma^{-1}\bar\varepsilon)^2\,z^2\right]
=\mathrm{IS}_{\overline\Sigma}(\overline\Sigma^{-1},\overline\Sigma^{-1},\overline M_{ee},\overline M_{ee}).
\]

\noindent\textbf{Term $A_4$ ($y^2$-part).}
Applying Lemma~\ref{lem:IS_shorthand} ($k=3$) with $\Omega=\overline\Sigma$, $A=B=\overline\Sigma^{-1}$, $C=X(s_0)$ and using
$\overline\Sigma\,\overline\Sigma^{-1}=I$, we obtain
\[
\mathrm{IS}_{\overline\Sigma}(\overline\Sigma^{-1},\overline\Sigma^{-1},X(s_0))
=\big(d^2+6d+8\big)\,\tr\!\big(\overline\Sigma\,X(s_0)\big)
=(d+2)(d+4)\,\tr\!\big(\overline\Sigma\,X(s_0)\big),
\qquad d:=mT.
\]
Moreover,
\[
\tr(\overline\Sigma\,X(s_0))
=\tr\!\big(s_0 s_0^\top \overline M_{se}\overline\Sigma\,\overline M_{se}^\top\big)
=s_0^\top V s_0,
\qquad
V:=\overline M_{se}\overline\Sigma\,\overline M_{se}^\top,
\]
hence $\E_{s_0}[\tr(\overline\Sigma\,X(s_0))]=\tr(\Sigma_0 V)=\mathrm{IS}_{\Sigma_0}(V)$.
Therefore,
\[
A_4
=4(d+2)(d+4)\,\mathrm{IS}_{\Sigma_0}(V),
\qquad d=mT.
\]

Hence,
\begin{equation}\label{eq:Ell-final}
\begin{aligned}
\E\!\left[\|\widehat G_\ell\|_2^2\right]
&\le
T\Bigg(
2\Big[
\mathrm{IS}_{\Sigma_0}(\overline M_{ss},\overline M_{ss})\;\mathrm{IS}_{\overline\Sigma}(\overline\Sigma^{-1},\overline\Sigma^{-1})
+\mathrm{IS}_{\overline\Sigma}(\overline\Sigma^{-1},\overline\Sigma^{-1},\overline M_{ee},\overline M_{ee})\\
&\qquad\qquad
+2\,\mathrm{IS}_{\Sigma_0}(\overline M_{ss})\;\mathrm{IS}_{\overline\Sigma}(\overline\Sigma^{-1},\overline\Sigma^{-1},\overline M_{ee})
+4(d+2)(d+4)\,\mathrm{IS}_{\Sigma_0}(V)
\Big]\\
&\qquad
+2mT\Big[
\mathrm{IS}_{\Sigma_0}(\overline M_{ss},\overline M_{ss})
+\mathrm{IS}_{\overline\Sigma}(\overline M_{ee},\overline M_{ee})
+2\,\mathrm{IS}_{\Sigma_0}(\overline M_{ss})\,\mathrm{IS}_{\overline\Sigma}(\overline M_{ee})\\
&\qquad\qquad
+4\,\mathrm{IS}_{\Sigma_0}(V)
\Big]
\Bigg),
\end{aligned}
\end{equation}
where $X(s_0):=\overline M_{se}^\top s_0 s_0^\top \overline M_{se}$ and $V:=\overline M_{se}\overline\Sigma \overline M_{se}^\top$.

\end{proof}

\section{Proof of Theorem \ref{thm:lqr-lifted-state-maps-bound}}
\label{sec:app-proof-lqr-lifted-state-maps-bound}

\begin{proof}
Recall that $\|\mathcal F_S\|_2^2=\lambda_{\max}(\mathcal F_S^\top \mathcal F_S)$. Since
\[
\mathcal F_S=\begin{bmatrix}I & F & \cdots & F^{T-1}\end{bmatrix}^\top,
\]
we have
\[
\mathcal F_S^\top \mathcal F_S=\sum_{t=0}^{T-1}(F^t)^\top F^t.
\]

\paragraph{Lower bound.}
For any fixed $t\in\{0,\dots,T-1\}$,
\[
\mathcal F_S^\top \mathcal F_S
=\sum_{k=0}^{T-1}(F^k)^\top F^k
\succeq (F^t)^\top F^t.
\]
Taking $\lambda_{\max}(\cdot)$ preserves the Loewner order, hence
\[
\|\mathcal F_S\|_2^2
=\lambda_{\max}(\mathcal F_S^\top \mathcal F_S)
\ge \lambda_{\max}\big((F^t)^\top F^t\big)
=\|F^t\|_2^2.
\]
Maximizing over $t$ yields
$\max_{0\le t\le T-1}\|F^t\|_2^2 \le \|\mathcal F_S\|_2^2$.

\paragraph{Upper bound.}
For any unit vector $v$,
\[
v^\top(\mathcal F_S^\top \mathcal F_S)v
=\sum_{t=0}^{T-1}\|F^t v\|_2^2
\le \sum_{t=0}^{T-1}\|F^t\|_2^2\,\|v\|_2^2
=\sum_{t=0}^{T-1}\|F^t\|_2^2.
\]
Taking the supremum over $\|v\|_2=1$ gives
\[
\|\mathcal F_S\|_2^2
=\sup_{\|v\|_2=1} v^\top(\mathcal F_S^\top \mathcal F_S)v
\le \sum_{t=0}^{T-1}\|F^t\|_2^2.
\]
\end{proof}

\section{Proof of Proposition~\ref{prop:poly-isserlis}}
\label{sec:app-proof-poly-isserlis}

\begin{proof}
\textbf{(i) Polynomial estimator.}
Since $P(\cdot,\cdot)$ and $\Phi(\cdot)$ are polynomial and $a_t=\Phi(s_t)^\top\theta+\varepsilon_t$,
a simple induction shows that for each $t\le T$, $s_t$ and $a_t$ are vector-valued polynomials in
$\xi:=(s_0,\varepsilon_0,\ldots,\varepsilon_{T-1})$.
Because $r(\cdot,\cdot)$ is polynomial, the return
$R(\tau)=\sum_{t=0}^{T-1}\gamma^t r(s_{t+1},a_t)$ is also a polynomial in $\xi$.
Moreover, for the Gaussian policy,
\[
\nabla_\theta\log\pi(a_t\mid s_t)=\Phi(s_t)\Sigma^{-1}\varepsilon_t,
\qquad
\nabla_{\ell}\log\pi(a_t\mid s_t)=\Sigma^{-1}(\varepsilon_t\odot \varepsilon_t)-\mathbf 1,
\]
so $\nabla_\theta\log\pi(a_t\mid s_t)$ and $\nabla_\ell\log\pi(a_t\mid s_t)$ are polynomials in $\xi$.
Therefore $\widehat G_\theta=\sum_{t=0}^{T-1}\gamma^t R(\tau)\nabla_\theta\log\pi(a_t\mid s_t)$ and
$\widehat G_\ell=\sum_{t=0}^{T-1}\gamma^t R(\tau)\nabla_\ell\log\pi(a_t\mid s_t)$ are polynomials in $\xi$.

\paragraph{Indexing convention.}
We use $j$ to index vector coordinates: for $\widehat G_\theta\in\mathbb R^{d}$ and $\widehat G_\ell\in\mathbb R^{m}$,
$\widehat G_{\theta,j}$ denotes the $j$-th component of $\widehat G_\theta$ (and similarly $\widehat G_{\ell,j}$).
For monomials, $\alpha\in\mathbb N^{n+mT}$ is a multi-index and $\xi^\alpha:=\prod_{i=1}^{n+mT}\xi_i^{\alpha_i}$.

\textbf{(ii) Exact moments via Isserlis.}
By construction, $\xi$ is a centered jointly Gaussian vector with covariance
$\mathrm{Cov}(\xi)=\mathrm{diag}(\Sigma_0,\Sigma,\ldots,\Sigma)$.
Since $\widehat G_\theta$ (and $\widehat G_\ell$) are polynomials in $\xi$, each admits a finite monomial expansion
$\widehat G_{\theta,j}(\xi)=\sum_{\alpha} c_{j,\alpha}\,\xi^\alpha$ (and similarly for $\widehat G_{\ell,j}$).
Therefore $\E[\widehat G_\theta]$ and $\E[\|\widehat G_\theta\|_{\mathrm{F}}^2]$
(and likewise for $\widehat G_\ell$) reduce to finitely many Gaussian monomial moments $\E[\xi^\alpha]$.
These moments are given exactly by Isserlis' (Wick) theorem: all odd-order moments vanish, and each even-order moment is
a sum over pairings of covariance entries determined by $\mathrm{Cov}(\xi)$.
Thus $\E[\widehat G_\theta]$, $\E[\widehat G_\ell]$, $\E[\|\widehat G_\theta\|_{\mathrm{F}}^2]$, and
$\E[\|\widehat G_\ell\|_{\mathrm{F}}^2]$ are exactly computable from $\mathrm{Cov}(\xi)$.
\end{proof}

\section{Proof of Theorem \ref{thm:nonlinear-bound} }
\label{sec:app-proof-nonlinear-upperbound}

\begin{proof}
We prove \eqref{eq:E2_theta} and \eqref{eq:E2_ell} separately.

\paragraph{Part (i): proof of \eqref{eq:E2_theta}.}
Let
\[
u(\tau):=\sum_{t=0}^{T-1} J_\theta(s_t)^\top \Sigma^{-1}\varepsilon_t,
\qquad\text{so that}\qquad
\widehat G_\theta(\tau)=R(\tau)\,u(\tau).
\]
\emph{Step 1 (sum-of-norms).}
Using $\big\|\sum_{t=0}^{T-1} x_t\big\|_2^2 \le T\sum_{t=0}^{T-1}\|x_t\|_2^2$,
with $x_t=J_\theta(s_t)^\top \Sigma^{-1}\varepsilon_t$, we have
\[
\|u(\tau)\|_2^2 \le T\sum_{t=0}^{T-1}\|J_\theta(s_t)^\top \Sigma^{-1}\varepsilon_t\|_2^2.
\]
Hence
\begin{align}
\mathbb E\!\left[\|\widehat G_\theta(\tau)\|_2^2 \,\middle|\, s_0\right]
&=
\mathbb E\!\left[R(\tau)^2\|u(\tau)\|_2^2 \,\middle|\, s_0\right] \nonumber\\
&\le
T\sum_{t=0}^{T-1}
\mathbb E\!\left[R(\tau)^2\|J_\theta(s_t)^\top \Sigma^{-1}\varepsilon_t\|_2^2 \,\middle|\, s_0\right].
\label{eq:step1}
\end{align}

\emph{Step 2 (operator/Frobenius bound).}
For each $t$,
\[
\|J_\theta(s_t)^\top \Sigma^{-1}\varepsilon_t\|_2
\le \|J_\theta(s_t)\|_{\mathrm{F}}\,\|\Sigma^{-1}\|_2\,\|\varepsilon_t\|_2,
\]
Plugging into \eqref{eq:step1} gives
\begin{equation}
\label{eq:step2}
\mathbb E\!\left[\|\widehat G_\theta(\tau)\|_2^2 \,\middle|\, s_0\right]
\le
T\|\Sigma^{-1}\|_2^2\sum_{t=0}^{T-1}
\mathbb E\!\left[R(\tau)^2\|J_\theta(s_t)\|_{\mathrm{F}}^2\|\varepsilon_t\|_2^2 \,\middle|\, s_0\right].
\end{equation}

\emph{Step 3 (Cauchy--Schwarz).}
For each $t$, apply Cauchy--Schwarz with
$X=R(\tau)^2$ and $Y=\|J_\theta(s_t)\|_{\mathrm{F}}^2\|\varepsilon_t\|_2^2$:
\[
\mathbb E[XY\mid s_0]\le \sqrt{\mathbb E[X^2\mid s_0]\,\mathbb E[Y^2\mid s_0]}
=
\sqrt{\mathbb E[R(\tau)^4\mid s_0]}\,
\sqrt{\mathbb E[\|J_\theta(s_t)\|_{\mathrm{F}}^4\|\varepsilon_t\|_2^4\mid s_0]}.
\]
Thus \eqref{eq:step2} implies
\begin{equation}
\label{eq:step3}
\mathbb E\!\left[\|\widehat G_\theta(\tau)\|_2^2 \,\middle|\, s_0\right]
\le
T\|\Sigma^{-1}\|_2^2\sqrt{\mathbb E[R(\tau)^4\mid s_0]}
\sum_{t=0}^{T-1}
\sqrt{\mathbb E[\|J_\theta(s_t)\|_{\mathrm{F}}^4\|\varepsilon_t\|_2^4\mid s_0]}.
\end{equation}

\emph{Step 4 (independence of $\varepsilon_t$ and $s_t$).}
Since $s_t$ is a measurable function of $(s_0,\varepsilon_0,\dots,\varepsilon_{t-1})$
and $\varepsilon_t$ is independent of $(\varepsilon_0,\dots,\varepsilon_{t-1})$,
we have $\varepsilon_t\perp s_t$ conditional on $s_0$. Therefore
\[
\mathbb E[\|J_\theta(s_t)\|_{\mathrm{F}}^4\|\varepsilon_t\|_2^4\mid s_0]
=
\mathbb E[\|J_\theta(s_t)\|_{\mathrm{F}}^4\mid s_0]\;\mathbb E[\|\varepsilon\|_2^4].
\]

\emph{Step 5 (Gaussian fourth moment).}
For $\varepsilon\sim\mathcal N(0,\Sigma)$,
\[
\mathbb E\|\varepsilon\|_2^4
=
(\operatorname{tr}\Sigma)^2 + 2\,\operatorname{tr}(\Sigma^2).
\]
Substituting the last two displays into \eqref{eq:step3} yields \eqref{eq:E2_theta}.
\paragraph{Part (ii): proof of \eqref{eq:E2_ell}.}
Define
\[
q(\varepsilon_t):=\nabla_\ell \log\pi(a_t\mid s_t)
=\Sigma^{-1}(\varepsilon_t\odot\varepsilon_t)-\mathbf 1
=\big((\varepsilon_t\odot\varepsilon_t)\odot e^{-2\ell}\big)-\mathbf 1\in\mathbb R^m,
\]
so that
\[
\widehat G_\ell(\tau)=R(\tau)\sum_{t=0}^{T-1} q(\varepsilon_t).
\]

\emph{Step 1 (sum-of-norms).}
Using $\big\|\sum_{t=0}^{T-1} x_t\big\|_2^2 \le T\sum_{t=0}^{T-1}\|x_t\|_2^2$ with $x_t=q(\varepsilon_t)$:
\[
\|\widehat G_\ell(\tau)\|_2^2
=
R(\tau)^2\Big\|\sum_{t=0}^{T-1}q(\varepsilon_t)\Big\|_2^2
\le
R(\tau)^2\,T\sum_{t=0}^{T-1}\|q(\varepsilon_t)\|_2^2.
\]
Taking $\mathbb E[\cdot\mid s_0]$ and applying Cauchy--Schwarz termwise gives
\begin{align}
\mathbb E[\|\widehat G_\ell(\tau)\|_2^2\mid s_0]
&\le
T\sum_{t=0}^{T-1}
\mathbb E\!\left[R(\tau)^2\|q(\varepsilon_t)\|_2^2\mid s_0\right]\nonumber\\
&\le
T\sum_{t=0}^{T-1}
\sqrt{\mathbb E\!\left[R(\tau)^4\mid s_0\right]}\;
\sqrt{\mathbb E\!\left[\|q(\varepsilon_t)\|_2^4\right]} \nonumber\\
&=
T^2\sqrt{\mathbb E\!\left[R(\tau)^4\mid s_0\right]}\;
\sqrt{\mathbb E\!\left[\|q(\varepsilon)\|_2^4\right]},
\label{eq:ell_bound_reduce}
\end{align}
where the last line uses that $\varepsilon_t$ are i.i.d.

\emph{Step 2 (exact fourth moment of the log-std score).}
Assume $\ell=\log\sigma$ and $\Sigma=\diag{e^{2\ell}}$ is diagonal. Then $\varepsilon=\diag{e^{\ell}}z$ with $z\sim\mathcal N(0,I_m)$, and elementwise
\[
q_i(\varepsilon)=\frac{\varepsilon_i^2}{e^{2\ell_i}}-1=z_i^2-1,
\]
so the distribution of $q(\varepsilon)$ does not depend on $\ell$.
Let $w_i:=(z_i^2-1)^2$. Then
\[
\|q(\varepsilon)\|_2^2=\sum_{i=1}^m w_i,
\qquad
\|q(\varepsilon)\|_2^4=\left(\sum_{i=1}^m w_i\right)^2
=\sum_{i=1}^m w_i^2 + 2\sum_{1\le i<j\le m} w_iw_j.
\]
Using independence of $(z_i)$, we have $\mathbb E[w_iw_j]=\mathbb E[w_i]\mathbb E[w_j]$ for $i\neq j$.
Moreover, for $z\sim\mathcal N(0,1)$,
\[
\mathbb E[(z^2-1)^2]=2,\qquad \mathbb E[(z^2-1)^4]=60.
\]
Therefore
\[
\mathbb E\|q(\varepsilon)\|_2^4
=
m\cdot 60 + 2\binom{m}{2}\cdot (2\cdot 2)
=
4m(m+14).
\]
Plugging this into \eqref{eq:ell_bound_reduce} yields
\[
\mathbb E[\|\widehat G_\ell(\tau)\|_2^2\mid s_0]
\le
2T^2\sqrt{m(m+14)}\;\sqrt{\mathbb E[R(\tau)^4\mid s_0]}.
\]

\end{proof}


\end{document}